\theoremstyle{plain}
\newtheorem{theorem}{Theorem}[section]
\newtheorem{thm}{Theorem}
\newtheorem{lemma}[theorem]{Lemma}
\newtheorem{corollary}[theorem]{Corollary}
\newtheorem{proposition}[theorem]{Proposition}
\newtheorem*{claim*}{Claim}
\theoremstyle{definition}
\newtheorem*{definition*}{Definition}
\newtheorem{definition}[theorem]{Definition}
\theoremstyle{remark}
\newtheorem{remark}[theorem]{Remark}
\numberwithin{equation}{section}
\def \L {\mathcal{L}}
\def \R {\mathbb{R}}
\def \H {\mathcal{H}}
\def \N {\mathbb{N}}
\def \L {\mathcal{L}}
\def \F {\mathcal{F}}
\def \eps {\varepsilon}
\DeclareMathOperator{\diam}{diam}
\DeclareMathOperator{\Lip}{Lip}
\DeclareMathOperator{\dist}{dist}
\DeclareMathOperator{\Cusp}{Cusp}
\DeclareMathOperator{\hull}{hull}
\newcommand{\res}{\big|}
\begin{document}


\baselineskip=17pt


\title[Lipschitz Functions on Unions and Quotients]{Lipschitz Functions on Unions and Quotients of Metric Spaces}

\author[D. Freeman]{David Freeman}
\address[D. Freeman]{University of Cincinnati Blue Ash College, Blue Ash, OH 45236, USA}
\email{freemadd@ucmail.uc.edu}

\author[C. Gartland]{Chris Gartland}
\address[C. Gartland]{Texas A\&M University, College Station, TX 77843, USA}
\email{cgartland@math.tamu.edu}

\date{\today}

\begin{abstract}
Given a finite collection $\{X_i\}_{i\in I}$ of metric spaces, each of which has finite Nagata dimension and Lipschitz free space isomorphic to $L^1$, we prove that their union has Lipschitz free space isomorphic to $L^1$. The short proof we provide is based on the Pe\l czy\'nski decomposition method. A corollary is a solution to a question of Kaufmann about the union of two planar curves with tangential intersection. A second focus of the paper is on a special case of this result that can be studied using geometric methods. That is, we prove that the Lipschitz free space of a union of finitely many quasiconformal trees is isomorphic to $L^1$. These geometric methods also reveal that any metric quotient of a quasiconformal tree has Lipschitz free space isomorphic to $L^1$. Finally, we analyze Lipschitz light maps on unions and metric quotients of quasiconformal trees in order to prove that the Lipschitz dimension of any such union or quotient is equal to 1.
\end{abstract}

\subjclass[2020]{Primary 51F30; Secondary 46B20, 30L05, 54F45}

\keywords{Lipschitz functions, Lipschitz free spaces, Lipschitz dimension}

\maketitle

\setcounter{tocdepth}{1}
\tableofcontents

\section{Introduction}

This paper continues the study of Lipschitz functions on quasiconformal (QC) trees as initiated in \cite{FG23} (see Sections \ref{S:prelims} and \ref{S:QC_union} for relevant definitions), expanding the scope of our results to include Lipschitz functions on unions and metric quotients of metric spaces of finite Nagata dimension. We study unions and quotients of QC trees as a special case.

Using the Pe\l czy\'nski decomposition method, we first prove a general result (Theorem \ref{T:General_union}) about the Lipschitz free space of a union of metric spaces with finite Nagata dimension. Via \cite[Theorem C]{FG23}, Theorem \ref{T:General_union} enables us to determine the Lipschitz free space of a finite union of QC trees (Theorem \ref{T:QC_union}). While the proof of Theorem \ref{T:General_union} is conveniently short, it is also rather abstract in nature. Therefore, we also provide a more geometric proof of Theorem \ref{T:QC_union} that does not rely on the Pe\l czy\'nski decomposition method. This geometric proof relies instead upon tools available in the context of doubling metric spaces and provides insight into the metric geometry of QC trees. In particular, our methods shed light on the geometry of branch points in a QC tree (see Theorem \ref{T:uniform} and its use in the proof of Theorems \ref{T:QC_union}, \ref{T:QC_quotient}, and \ref{T:quotient_dim}).

We also study Lipschitz light mappings defined on a union of finitely many QC trees or a metric quotient of a single QC tree. Building on results in \cite{FG23} and \cite{Freeman20}, we are able to prove that the Lipschitz dimension of such a union or quotient is equal to 1. Via 
the results of \cite{CK13} (see also the discussion in \cite[Section 1.2]{David21}), this provides another proof that a union of QC trees bi-Lipschitz embeds into $L^1(Z)$, for some measure space $Z$. Regarding embeddings, we also remind the reader that, by \cite[Theorem 1.2]{DEV23}, any QC tree admits a bi-Lipschitz embedding into some Euclidean space. Thus, by \cite[Theorem 3.2]{LP01}, any union of finitely many QC trees embeds into some Euclidean space. 

In the following subsections we present and discuss our main results in more detail.

\subsection{Lipschitz Free Space Results}

Given a QC tree $T$, one of the main results of \cite{FG23} is that the Lipschitz free space $\mathcal{F}(T)$ is isomorphic to $L^1(Z)$ for some measure space $Z$. This result can be obtained by viewing $T$ as a union of countably many QC arcs $\{\gamma_i\}_{i\in I}$ whose arrangement within $T$ exhibits controlled geometry. In particular, intersecting arcs $\gamma_i$ and $\gamma_j$ exhibit a certain orthogonality property reminiscent of the geometric conditions described in \cite[Proposition 5.1]{Kaufmann14} and \cite[Lemma 3.12]{Weaver18}. In this setting, the space $\F\left(\bigcup_{i\in I}\gamma_i\right)$ is isomorphic to the $\ell^1$-sum $\bigoplus_{i\in I}^1 \F(\gamma_i)$. 

However, in the absence of a controlled geometric relationship between constituent subsets of a union $X=\bigcup_{i\in I}X_i$, methods such as those referenced above cannot be directly applied in order to conclude that the free space on $X$ decomposes into the sum of free spaces on $\{X_i\}_{i\in I}$. This was noted by Kaufmann in the preprint \cite{Kaufmann14} in connection with the metric space 
\[\Cusp:=\{(x,0)\,|\,0\leq x\}\cup\{(x,x^2)\,|\,0\leq x\}\subset\mathbb{R}^2.\]
Kaufmann poses the question of whether or not the Lipschitz free space of a space such as $\Cusp$ is isomorphic to a subspace of $L^1$. One of the main results of this paper implies a positive answer to Kaufmann's question. Indeed, we prove the following.

\begin{thm}\label{T:General_union}
Suppose $X$ is a separable metric space such that $X=X_1\cup X_2$. If, for $i=1,2$, the space $X_i$ has finite Nagata dimension and $\mathcal{F}(X_i)\approx L^1(Z_i)$ for some measure space $Z_i$, then $\mathcal{F}(X)\approx L^1(Z)$ for some measure space $Z$.
\end{thm}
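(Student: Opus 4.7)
The plan is to apply the Pe\l czy\'nski decomposition method to the pair of Banach spaces $\mathcal{F}(X)$ and $\mathcal{F}(X_1) \oplus \mathcal{F}(X_2) \approx L^1(Z_1 \sqcup Z_2)$. Since $L^1$-spaces are stable under $\ell^1$-sums of countably many copies of themselves, it will suffice to show that each of these two spaces is isomorphic to a complemented subspace of the other; Pe\l czy\'nski's theorem then forces them to be isomorphic, giving $\mathcal{F}(X) \approx L^1$.

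One direction follows from the linear Lipschitz extension theorem of Lang and Schlichenmaier: for each $i$, the finite Nagata dimension of $X_i$ yields a bounded linear extension operator $E_i : \Lip_0(X_i) \to \Lip_0(X)$. Because $E_i f(x)$ is assembled from a Nagata-type cover as a locally finite convex combination of point-evaluations of $f$ on $X_i$, the operator $E_i$ is weak-$*$ continuous and hence admits a preadjoint $(E_i)_\ast : \mathcal{F}(X) \to \mathcal{F}(X_i)$. This preadjoint is a bounded left inverse to the canonical isometric embedding $\iota_i : \mathcal{F}(X_i) \to \mathcal{F}(X)$, so $\mathcal{F}(X_i)$ is complemented in $\mathcal{F}(X)$; in particular, $L^1(Z_1) \approx \mathcal{F}(X_1)$ embeds complementedly into $\mathcal{F}(X)$.

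For the reverse direction, consider the bounded surjection
\[
\Sigma : \mathcal{F}(X_1) \oplus \mathcal{F}(X_2) \to \mathcal{F}(X), \quad (\mu_1,\mu_2) \mapsto \iota_1(\mu_1) + \iota_2(\mu_2).
\]
Its kernel is $\{(\mu,-\mu) : \mu \in \iota_1(\mathcal{F}(X_1)) \cap \iota_2(\mathcal{F}(X_2))\}$, and by the standard identification $\iota_1(\mathcal{F}(X_1)) \cap \iota_2(\mathcal{F}(X_2)) = \mathcal{F}(X_1 \cap X_2)$ inside $\mathcal{F}(X)$, this kernel is isomorphic to $\mathcal{F}(X_1 \cap X_2)$. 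Since $X_1 \cap X_2 \subset X_1$ inherits finite Nagata dimension, Lang-Schlichenmaier again produces a projection $P : \mathcal{F}(X_1) \to \mathcal{F}(X_1 \cap X_2)$, from which $(\mu_1,\mu_2) \mapsto (P\mu_1,-P\mu_1)$ is easily checked to define a bounded projection of $\mathcal{F}(X_1) \oplus \mathcal{F}(X_2)$ onto $\ker \Sigma$. Therefore $\mathcal{F}(X)$, as the quotient of $\mathcal{F}(X_1) \oplus \mathcal{F}(X_2)$ by the complemented subspace $\ker \Sigma$, is itself isomorphic to a complemented subspace of $\mathcal{F}(X_1) \oplus \mathcal{F}(X_2) \approx L^1$, closing the Pe\l czy\'nski loop.

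The main technical obstacle I foresee is justifying the intersection identity $\iota_1(\mathcal{F}(X_1)) \cap \iota_2(\mathcal{F}(X_2)) = \mathcal{F}(X_1 \cap X_2)$ used to analyze $\ker \Sigma$; although folklore in the Lipschitz free space literature, it relies on the weak-$*$ closedness of $\mathcal{F}(A)$ in $\mathcal{F}(M)$ for closed $A \subset M$ together with a careful approximation argument, and may require first reducing to the case where $X_1, X_2$ are closed in $X$. A secondary point requiring attention is verifying that the Lang-Schlichenmaier operator is genuinely weak-$*$ continuous on $\Lip_0$, so that its preadjoint really furnishes an operator between the free spaces.
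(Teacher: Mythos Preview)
Your overall Pe\l czy\'nski strategy is reasonable, and Direction~1 is fine (this is exactly Lemma~\ref{lem:Nagatadecomp} in the paper). The fatal gap is in Direction~2: the map $\Sigma$ is \emph{not} surjective in general, so $\mathcal{F}(X)$ is not the quotient of $\mathcal{F}(X_1)\oplus\mathcal{F}(X_2)$ by $\ker\Sigma$. Surjectivity of $\Sigma$ is equivalent to the restriction map $\Sigma^*:\Lip_0(X)\to\Lip_0(X_1)\oplus\Lip_0(X_2)$, $f\mapsto(f|_{X_1},f|_{X_2})$, being bounded below. For the motivating $\Cusp$ example this fails: with $X_1=\{(t,0)\}$, $X_2=\{(t,t^2)\}$, the functions $f_n$ given by $f_n\equiv 0$ on $X_1$ and $f_n(t,t^2)=\min(t,1/n)$ on $X_2$ satisfy $\|f_n|_{X_i}\|_{\Lip}\leq 1$ while $\|f_n\|_{\Lip_0(X)}\geq |f_n(1/n,1/n^2)-f_n(1/n,0)|/(1/n^2)=n$. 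Thus $\mathcal{F}(X_1)+\mathcal{F}(X_2)$ is a proper dense (hence non-closed) subspace of $\mathcal{F}(X)$, and your quotient identification breaks down precisely at the tangential intersection you are trying to handle. The intersection identity you flagged as the main worry is actually fine (it follows from the support theorem for free spaces); it is the surjectivity that fails.

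The paper circumvents this by never attempting to realise $\mathcal{F}(X)$ as a quotient of $\mathcal{F}(X_1)\oplus\mathcal{F}(X_2)$. Instead it thickens $X_1$ by a Whitney net $N\subset X_2$ relative to $X_1$; the key geometric facts are that $(X_1\cup N)/X_1$ is bi-Lipschitz equivalent to an ultrametric space (so its free space is $\ell^1$ and gets absorbed into $\mathcal{F}(X_1)$), while $(X_1\cup X_2)/(X_1\cup N)$ is isometric to $X_2/F$ for $F=X_2\cap(X_1\cup N)$. Two applications of Lemma~\ref{lem:Nagatadecomp} then give $\mathcal{F}(X)\approx\mathcal{F}(X_1)\oplus\mathcal{F}(X_2/F)$, and since $\mathcal{F}(X_2/F)$ is a summand of $\mathcal{F}(X_2)$, the Pe\l czy\'nski argument finishes as in your plan. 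The Whitney net is exactly what tames the tangential geometry that makes your $\Sigma$ non-surjective.
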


In Corollary~\ref{cor:F(union)=F(M)}, we prove another result of this type about unions of spaces admitting bi-Lipschitz embeddings into $\R^n$ (or more generally, into a self-similar, doubling, bi-Lipschitz homogeneous space, see paragraph preceding Corollary~\ref{cor:F(union)=F(M)} for the definitions). As indicated above, Theorem~\ref{T:General_union} immediately implies the following.

\begin{corollary} $\mathcal{F}(\Cusp)\approx L^1(Z)$ for some measure space $Z$.
\end{corollary}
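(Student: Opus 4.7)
The plan is to apply Theorem~\ref{T:General_union} directly to the decomposition $\Cusp = X_1 \cup X_2$, where $X_1 := \{(x,0) : x \geq 0\}$ is the positive $x$-axis and $X_2 := \{(x,x^2) : x \geq 0\}$ is the parabolic branch. Both sets lie in $\R^2$, so $\Cusp$ is separable, and I only need to verify, for each $i \in \{1, 2\}$, that $X_i$ has finite Nagata dimension and that $\F(X_i) \approx L^1(Z_i)$ for some measure space $Z_i$.

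Both hypotheses follow from a single observation: each $X_i$ is bi-Lipschitz equivalent to the half-line $[0, \infty)$. For $X_1$ this is clear via the $x$-coordinate projection. For $X_2$, parameterizing by arc length yields an isometry from $[0, \infty)$ onto $X_2$ equipped with its intrinsic length metric, so it suffices to compare this length metric to the Euclidean metric inherited from $\R^2$. A direct estimate shows that the arc length $\int_x^y \sqrt{1+4t^2}\,dt$ and the chord length $(y-x)\sqrt{1+(x+y)^2}$ are comparable up to a universal multiplicative constant for all $0 \leq x \leq y$, which gives the desired bi-Lipschitz equivalence. This is the only step of the argument that requires any real computation, and it is routine; no genuine obstacle arises.

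With the bi-Lipschitz equivalence between $X_i$ and $[0, \infty)$ in hand, both hypotheses transfer from $[0, \infty)$ to each $X_i$, since Nagata dimension and the isomorphism class of the Lipschitz free space are bi-Lipschitz invariants. The half-line has Nagata dimension $1$ as a subset of $\R$, and $\F([0, \infty))$ is isomorphic to an $L^1$-space by well-known results: for instance, $[0, \infty)$ is a $1$-Lipschitz retract of $\R$, so $\F([0,\infty))$ is a complemented subspace of $\F(\R) \approx L^1(\R)$, and one then invokes the structure theory of separable $\mathcal{L}_1$-spaces. Theorem~\ref{T:General_union} now yields $\F(\Cusp) \approx L^1(Z)$ for some measure space $Z$, as desired.
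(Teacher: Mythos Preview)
Your approach is correct and matches the paper's: the corollary is stated there as an immediate consequence of Theorem~\ref{T:General_union}, and you have supplied exactly the details needed to see this, namely that each branch of $\Cusp$ is bi-Lipschitz equivalent to $[0,\infty)$ and hence has finite Nagata dimension and free space isomorphic to an $L^1$-space.

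One small remark: your justification that $\F([0,\infty))$ is an $L^1$-space via ``complemented in $\F(\R)\approx L^1(\R)$, then structure theory of $\mathcal{L}_1$-spaces'' is more circuitous than necessary and, as stated, not quite airtight (complemented subspaces of $L^1$ need not be $L^1$-spaces in general). It is cleaner to cite the standard isometric identification $\F([0,\infty)) \cong L^1([0,\infty))$ coming from the differentiation isomorphism $\Lip_0([0,\infty)) \cong L^\infty([0,\infty))$, or to invoke Godard's theorem on free spaces of subsets of $\R$-trees. Also note that, since any subset of $\R^2$ is doubling, the finite Nagata dimension of each $X_i$ follows directly from Lemma~\ref{lem:finiteNagatadim} without needing the bi-Lipschitz equivalence to $[0,\infty)$.
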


The proof of Theorem~\ref{T:General_union} relies on the the Pe\l czy\'nski decomposition method (Lemma~\ref{lem:Pdm}) -- a standard tool in Banach space theory used to establish isomorphisms. However, the isomorphism produced by that method is a bit abstract and lacks geometric content. The following theorem is a special case of Theorem~\ref{T:General_union}, and we give a more geometric proof that doesn't invoke Pe\l czy\'nski.

\begin{thm}\label{T:QC_union}
Suppose $X$ is a metric space. If $X=\bigcup_{i\in I}T_i$, where $\{T_i\}_{i\in I}$ is a finite collection of QC trees, there exists a measure space $Z$ such that $\mathcal{F}\left(X\right)\approx L^1(Z)$. 
\end{thm}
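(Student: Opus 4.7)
The plan is to construct an explicit geometric decomposition of $X$ into QC arcs and to exhibit $\mathcal{F}(X)$ directly as an $\ell^1$-sum of $L^1$-spaces, bypassing the Pe\l czy\'nski method used in Theorem~\ref{T:General_union}. The starting point is the observation underlying \cite{FG22} that any single QC tree $T$ admits a decomposition into countably many QC arcs $\{\gamma_j\}$ meeting at branch points with a uniform orthogonality property, which in turn yields an isomorphism $\mathcal{F}(T) \approx \bigoplus_{j}^{1} \mathcal{F}(\gamma_j)$.

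First I would apply this arc decomposition to each constituent tree $T_i$, producing a master countable collection $\{\gamma_k\}_{k\in K}$ of QC arcs whose union is $X$. The difficulty is that arcs coming from different trees $T_i$ and $T_j$ may meet in ways not governed by the internal branch-point geometry of any single tree. This is where Theorem~\ref{T:uniform} is essential: because $I$ is finite, it supplies uniform quantitative control on branch-type points in each $T_i$, which I would leverage to obtain a uniform orthogonality estimate at every intersection point of the master family, whether it occurs inside a single $T_i$ or across two distinct $T_i, T_j$. A finite refinement of the arcs near cross-tree intersections will likely be needed to make the orthogonality constants uniform.

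Once uniform orthogonality has been established at every intersection of the arc family $\{\gamma_k\}_{k\in K}$, a gluing argument in the spirit of \cite[Proposition 5.1]{Kaufmann14} and \cite[Lemma 3.12]{Weaver18} yields the $\ell^1$-decomposition
\[ \mathcal{F}(X) \approx \bigoplus_{k \in K}^{1} \mathcal{F}(\gamma_k). \]
Since each $\gamma_k$ is a QC arc, $\mathcal{F}(\gamma_k) \approx L^1(Z_k)$ by the arc case of \cite[Theorem C]{FG22}, and therefore
\[ \mathcal{F}(X) \approx \bigoplus_{k \in K}^{1} L^1(Z_k) \approx L^1(Z) \]
for the disjoint-union measure space $Z = \bigsqcup_k Z_k$.

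The main obstacle I expect is the uniform orthogonality across distinct trees. A priori, the sets $T_i \cap T_j$ can be geometrically complicated (not single points, nor even discrete), and arranging the arc decomposition to be compatible with such intersections while retaining uniform quantitative bounds is delicate. The finiteness of $I$ is crucial at this stage, since the constants emerging from Theorem~\ref{T:uniform} must be combined across only finitely many pairs of trees; an infinite union would in general defeat this uniformity and so the $\ell^1$-gluing step.
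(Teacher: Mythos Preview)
Your proposal has a genuine gap at exactly the point you flag as the main obstacle: uniform orthogonality across distinct trees cannot be arranged in general. The $\Cusp$ example from the introduction already shows this. There $T_1$ and $T_2$ are straight line segments (so the internal arc decomposition of each is trivial), yet the intersection point is tangential: the points $(x,0)\in T_1$ and $(x,x^2)\in T_2$ satisfy $d((x,0),(x,x^2))=x^2$, while each lies at distance $\approx x$ from the intersection point $(0,0)$. No refinement of the arc family near $(0,0)$ produces the inequality $d(p,q)\gtrsim d(p,\text{junction})+d(q,\text{junction})$ that the Kaufmann/Weaver gluing lemmas require. Theorem~\ref{T:uniform} does not help here: it controls the branch set of a \emph{single} tree relative to its own leaf set, and says nothing about how two different trees may meet. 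In $\Cusp$ neither tree has any branch points at all.

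The paper's proof avoids orthogonality entirely. It proceeds by induction on $|I|$, and at the inductive step applies Theorem~\ref{thm:F(union)} (a Whitney-net argument, not a transversality argument) to obtain
\[
\F\Bigl(\bigcup_{i\leq k}T_i\Bigr)\approx \F\Bigl(\bigcup_{i<k}T_i\Bigr)\oplus \F(T_k/M)
\]
for some closed $M\subset T_k$. The first summand is handled by the inductive hypothesis; the second is handled by Theorem~\ref{T:QC_quotient}. Theorem~\ref{T:uniform} does enter, but only inside the proof of Theorem~\ref{T:QC_quotient} (via Theorem~\ref{T:tree_subset}), where it is applied to the branch set of the convex hull of $M$ inside the single tree $T_k$---never to a cross-tree intersection. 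In short, the paper replaces the unattainable orthogonality step by the quotient result, and that is the missing idea in your plan.
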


We emphasize that Theorem \ref{T:QC_union} assumes nothing about the arrangement of the trees $\{T_i\}_{i\in I}$ in relation to one another. As will be evident in the proof below, this is made possible by the topological fact that any non-degenerate closed and connected subset of a tree is itself a tree. In the course of proving Theorem \ref{T:QC_union} (in a manner independent from Theorem \ref{T:General_union}), we also prove the following result (see Definition~\ref{D:quotient} for the definition of metric quotients).

\begin{thm}\label{T:QC_quotient}
Suppose $T$ is a QC tree. If $M\subset T$ is closed, then $\mathcal{F}(T/M) \approx L^1(Z)$ for some measure space $Z$. 
\end{thm}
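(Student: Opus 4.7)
My approach is to use the tree structure of $T$ to decompose $T/M$ as an $\ell^1$-sum of simpler metric quotients, one for each connected component of $T\setminus M$, and then to analyze each summand via the geometric machinery of the paper. Let $q\colon T\to T/M$ denote the quotient map, set $*:=q(M)$, and let $\{U_i\}_{i\in I}$ be the countable collection of connected components of $T\setminus M$; for each $i$, write $\hat U_i:=q(\overline{U_i})=U_i\cup\{*\}$ with the induced quotient metric. The first step is to verify two metric identities: for $x\in U_i$, $y\in U_j$ with $i\neq j$, $d_{T/M}(x,y)=d_T(x,M)+d_T(y,M)$; and for $x,y\in U_i$, $d_{T/M}(x,y)=\min\{d_T(x,y),\,d_T(x,\partial U_i)+d_T(y,\partial U_i)\}$. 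Both follow from the tree structure of $T$: the unique arc between points of distinct components must cross $M$, while the unique arc between two points of a single component $U_i$ cannot leave $U_i$ (otherwise removing a point of $\partial U_i$ lying on this arc would disconnect $U_i$). These identities say precisely that $T/M$ is isometric to the $\ell^1$-wedge sum $\bigoplus_{i\in I}^{1}(\hat U_i,*)$.

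Next, the standard identity $\mathcal{F}\bigl(\bigoplus_{i}^{1}X_i\bigr)\cong\bigoplus_{i}^{1}\mathcal{F}(X_i)$ for wedge sums of pointed metric spaces, together with the fact that a countable $\ell^1$-sum of $L^1$-spaces is again an $L^1$-space, reduces the theorem to showing $\mathcal{F}(\hat U_i)\approx L^1$ for each $i$. Each $\hat U_i$ is itself a metric quotient of the form $\overline{U_i}/\partial U_i$, where $\overline{U_i}$, being a closed connected subset of $T$, is a QC tree in its own right, and $\partial U_i$ is a closed subset of $\overline{U_i}$. A useful structural observation at this stage is that every $p\in\partial U_i$ must be an endpoint of $\overline{U_i}$: if $p$ were interior to $\overline{U_i}$, removing $p$ would disconnect $\overline{U_i}$ while leaving the connected set $U_i\subset\overline{U_i}\setminus\{p\}$ entirely on one side, a contradiction.

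The main obstacle is completing the argument for each $\hat U_i$, since $\partial U_i$ can be an uncountable (e.g., Cantor-like) closed set of endpoints, so that $\hat U_i$ is not itself a QC tree---already when $\overline{U_i}$ is an arc with both endpoints in $\partial U_i$, the quotient $\hat U_i$ is a circle. The plan is to leverage the uniform branch-point geometry of QC trees provided by Theorem \ref{T:uniform} to cover each $\hat U_i$ by finitely many subsets, each bi-Lipschitz equivalent to a QC tree, and then invoke Theorem \ref{T:QC_union}, which is being proved in the same geometric pass of the paper. Concretely, I would separate $\overline{U_i}$ into its ``bulk interior'' (which avoids $\partial U_i$ and therefore survives the quotient unchanged as a QC subtree) and a thin neighborhood of $\partial U_i$ whose image in $\hat U_i$, after collapsing $\partial U_i$ to $*$, becomes bi-Lipschitz equivalent to a star-like QC tree by a midpoint-splitting or star-replacement argument rooted in the uniform separation of branches supplied by Theorem \ref{T:uniform}. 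Verifying that this cover behaves well enough for the finite-union machinery of Theorem \ref{T:QC_union} to apply is the crux of the argument.
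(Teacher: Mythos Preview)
Your initial reduction matches the paper exactly: Lemma~\ref{L:pre_coproduct} identifies $T/M$ (up to bi-Lipschitz) with the wedge $\coprod_{i\in I}(T_i/M_i,[M_i])$ (your $\hat U_i$), and the observation that $M_i=\partial U_i$ consists of leaves of $T_i=\overline{U_i}$ is also used. The gap is in what comes next.

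Your plan to cover each $\hat U_i$ by finitely many QC trees and invoke Theorem~\ref{T:QC_union} has two problems. First, circularity: the geometric proof of Theorem~\ref{T:QC_union} in Section~4 takes Theorem~\ref{T:QC_quotient} as input (via Theorem~\ref{thm:F(union)}), so you would have to fall back on the Pe\l czy\'nski-based Theorem~\ref{T:General_union}, defeating the purpose of a geometric argument. Second, and more seriously, the claim that the collapsed thin neighborhood of $\partial U_i$ is bi-Lipschitz to a ``star-like QC tree'' is unsubstantiated. You yourself note that when $\overline{U_i}$ is an arc with both endpoints in $\partial U_i$ the quotient is a circle, not a tree; for general (Cantor-like) $\partial U_i$ the quotient near $*$ is a wedge of arcs and circles, and ``midpoint-splitting or star-replacement'' is too vague to convert this into a QC tree. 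The real difficulty---controlling how infinitely many pieces fit together at $*$---is precisely what you have not addressed.

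The paper's route from this point is quite different. Rather than covering $T_i/M_i$ by QC trees, it \emph{enlarges} the collapsed set: with $S_i=\hull(M_i)$ and $B_i=\mathcal B(S_i)$, Lemma~\ref{lem:Nagatadecomp} gives
\[
\Lip_0(T_i/M_i)\approx \Lip_0\bigl([B_i\cup M_i]\bigr)\oplus \Lip_0\bigl(T_i/(B_i\cup M_i)\bigr).
\]
Theorem~\ref{T:uniform} (this is where it is actually used) shows $[B_i\cup M_i]$ is uniformly disconnected, hence bi-Lipschitz to an ultrametric space, so its Lip space is $\ell^\infty$. Lemma~\ref{L:coproduct} shows $T_i/(B_i\cup M_i)$ is a sum of pieces each of which is a QC tree or a QC \emph{wreath} (tree mod two leaves); both types have $\Lip_0\approx L^\infty$ by \cite[Theorem~C]{FG22}, the wreath case being codimension one. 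The idea you are missing is that adding the branch points $B_i$ to the collapsed set is exactly what makes the complementary pieces tree/wreath-like, while the resulting ``extra'' factor $[B_i\cup M_i]$ is tame because of Theorem~\ref{T:uniform}.
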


The proof of Theorem \ref{T:QC_quotient} relies upon a certain separation property of the branch points in a QC tree, which may be of some interest in its own right (see Theorem \ref{T:uniform}). Finally, in Theorem~\ref{thm:F(quotient)=L1}, we prove a result similar to Theorem~\ref{T:QC_quotient} for quotients of metric spaces by subsets with finite Nagata dimension.

\subsection{Lipschitz Dimension Results} 

In \cite{FG23} it is also shown that the Lipschitz dimension of any given QC tree $T$ is equal to $1$. As in the study of $\mathcal{F}(T)$, the proof of this result utilizes a geometrically controlled decomposition of $T$ into QC arcs. With this decomposition in hand, Lipschitz light mappings on the constituent QC arcs (provided by \cite[Theorem 2.2]{Freeman20}) can be combined to obtain a Lipschitz light map $f:T\to\mathbb{R}$.

In \cite{David21}, David poses the following question: Given metric spaces $X_1$ and $X_2$, is it true that $\dim_L(X_1\cup X_2)=\max\{\dim_L(X_1),\dim_L(X_2)\}$? The second main result of our paper answers this question in the affirmative when the spaces $X_1$ and $X_2$ are QC trees. Indeed, we prove the following.

\begin{thm}\label{T:Lip_dim_unions}
Suppose $X$ is a metric space. If $X=\bigcup_{i\in I}T_i$, where $\{T_i\}_{i\in I}$ is a finite collection of QC trees, then $\dim_L(X)=1$. 
\end{thm}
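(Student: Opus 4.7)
The lower bound $\dim_L(X) \geq 1$ is immediate: each $T_i$ is a non-degenerate isometric subspace of $X$, and $\dim_L(T_i) = 1$ by \cite{FG22}, so monotonicity of Lipschitz dimension under isometric containment gives $\dim_L(X) \geq 1$. The substantive content is the upper bound, which amounts to exhibiting a Lipschitz light map $f: X \to \mathbb{R}$.

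My plan is to extend the strategy employed in \cite{FG22} for a single QC tree. There one (i) decomposes $T$ into a countable collection of QC arcs $\{\gamma_j\}_j$ enjoying bounded multiplicity and scale-separation at branch points, and then (ii) assembles the arcwise Lipschitz light maps provided by \cite[Theorem 2.2]{Freeman20} into a global map $T \to \mathbb{R}$ using integer offsets chosen according to the scale of each arc. By induction on the finite cardinality of $I$, it suffices to treat $X = T_1 \cup T_2$. Each $T_i$ admits an arc decomposition of the above type, and taking the union of the two families yields a countable cover of $X$ by QC arcs. Provided this combined family still satisfies bounded multiplicity and scale-separation, the assembly of (ii) will produce the desired Lipschitz light map on $X$.

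The hard part will be verifying these two properties at a point $x$ in the intersection $M := T_1 \cap T_2$, where arcs from the two decompositions meet with no a priori geometric compatibility between the $T_1$-structure and the $T_2$-structure. I plan to control the local combinatorics at such points by applying Theorem \ref{T:uniform} to each $T_i$ separately; its uniform separation of branch points inside each $T_i$ propagates (at the cost of at most doubling the multiplicity constant) to a bound on the number of combined arcs meeting $x$, while the scale-separation condition reduces to two instances of the single-tree estimate, one per $T_i$. Once these ingredients are in hand, the Lipschitz property of the resulting map follows automatically from the offset construction, and the light property reduces arc-by-arc to \cite[Theorem 2.2]{Freeman20}, completing the inductive step. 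The role of Theorem \ref{T:uniform} is entirely analogous here to its role in the proofs of Theorems \ref{T:QC_union} and \ref{T:QC_quotient}: it is the geometric input that makes the union-of-trees setting behave, combinatorially, almost like a single tree.
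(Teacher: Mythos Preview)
Your proposal has a genuine gap at the assembly step. The arc decomposition machinery of \cite{FG22} requires the arcs to form a \emph{geometric tree-like decomposition}: distinct arcs meet only at endpoints, and the integer-offset assembly exploits this combinatorial structure to verify lightness. When you combine the arc families from $T_1$ and $T_2$, this fails badly. An arc $\gamma_1 \subset T_1$ can intersect an arc $\gamma_2 \subset T_2$ in an arbitrary closed set---think of the $\Cusp$ example, where the two constituent arcs meet tangentially at a single point yet share no common subarc, or worse, two arcs whose intersection is a Cantor set. ``Bounded multiplicity at each point'' does not capture this: the obstruction is not how many arcs pass through a given $x$, but that the arcs overlap along extended, uncontrolled sets rather than meeting only at endpoints. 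Theorem~\ref{T:uniform} is irrelevant here; it controls branch points of a \emph{single} tree relative to its own leaves and says nothing about how two unrelated trees interact. Consequently neither your multiplicity bound nor your reduction of the light property ``arc-by-arc to \cite{Freeman20}'' goes through.

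The paper's approach sidesteps this entirely. Rather than attempting a global arc decomposition of the union, it proceeds by \emph{extension}: assume inductively a Lipschitz light map $f$ on $X_{k-1}$, set $Y := X_{k-1} \cap T_k$, and observe that the closures $\{T_i'\}$ of the components of $T_k \setminus Y$ are subtrees of $T_k$ with $T_i' \cap Y \subset \mathcal{L}(T_i')$. One then extends $f\res_{T_i' \cap Y}$ to each $T_i'$ (Lemma~\ref{L:bdry_subset_extension}, which in turn uses Lemma~\ref{L:leaf_extension} and the branch-point separation of Lemma~\ref{L:sep_pts}), glues these extensions via Lemma~\ref{L:subset_and_components} to get a Lipschitz light map on $T_k$ agreeing with $f$ on $Y$, and finally invokes Lemma~\ref{L:two_piece} to conclude lightness on $X_{k-1} \cup T_k$. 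The key point is that the intersection $Y$, however complicated, becomes a set of \emph{leaves} in each component subtree, and extending a Lipschitz light map from a closed set of leaves is tractable.
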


We also calculate the Lipschitz dimension of any metric quotient of a QC tree.

\begin{thm}\label{T:quotient_dim}
If $T$ is a QC tree and $M\subset T$ is closed, then $\dim_L(T/M)=1$. 
\end{thm}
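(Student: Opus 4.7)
The task is to construct a Lipschitz light map $\tilde f \colon T/M \to \mathbb{R}$; together with the routine lower bound $\dim_L(T/M) \geq 1$, this gives $\dim_L(T/M) = 1$. The plan is to adapt the proof of $\dim_L(T) = 1$ from \cite{FG22}, which decomposes $T$ into a geometrically controlled union of QC arcs, applies \cite[Theorem 2.2]{Freeman20} to obtain uniform Lipschitz light maps on each arc, and glues them together. Let $\{C_\alpha\}_{\alpha \in A}$ be the connected components of $T \setminus M$. Each $\overline{C_\alpha}$ is a closed, connected subset of $T$, hence itself a QC tree, and the quotient map $q \colon T \to T/M$ identifies every set $\overline{C_\alpha} \cap M$ to a single common point $* \in T/M$.

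Applying \cite[Theorem 2.2]{Freeman20} on each $\overline{C_\alpha}$ (via its QC arc decomposition) yields Lipschitz light maps $f_\alpha \colon \overline{C_\alpha} \to \mathbb{R}$ which, after subtracting a constant, satisfy $f_\alpha \equiv 0$ on $\overline{C_\alpha} \cap M$. Setting $\tilde f(q(x)) := f_\alpha(x)$ for $x \in C_\alpha$ and $\tilde f(*) := 0$ defines a candidate map on $T/M$. The crucial ingredient here is Theorem \ref{T:uniform}, the uniform separation of branch points in $T$, which ensures that the Lipschitz and light constants of the $f_\alpha$ can be taken independent of $\alpha$ and that the quotient metric on $T/M$ near $*$ is comparable to $\operatorname{dist}_T(\,\cdot\,, M)$ on $T \setminus M$.

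The main obstacle is to verify the Lipschitz light property at the collapsed point $*$. Preimages $\tilde f^{-1}(B(y,r))$ that are entirely contained in a single $q(\overline{C_\alpha})$ are directly controlled by the Lipschitz light property of $f_\alpha$, but the component of $\tilde f^{-1}(B(0,r))$ meeting $*$ can absorb pieces from many different $C_\alpha$ simultaneously. The delicate step is to show, via Theorem \ref{T:uniform} together with the doubling property of $T$, that only a uniformly bounded number of components $C_\alpha$ can contribute a diameter-$\geq cr$ subset to this component; the contributions of the remaining components have small diameter and merge with $*$ into a single controlled component. This uniform counting, controlled by the branch-point geometry of $T$, is the heart of the argument and is what allows the individual arc-level light maps to be assembled into a Lipschitz light map on the quotient.
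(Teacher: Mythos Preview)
There is a genuine gap at the step ``after subtracting a constant, satisfy $f_\alpha \equiv 0$ on $\overline{C_\alpha} \cap M$''. The set $M_\alpha := \overline{C_\alpha} \cap M$ is a closed subset of the leaves of the subtree $\overline{C_\alpha}$, and it can be infinite --- even a Cantor set of leaves. Subtracting a constant forces $f_\alpha$ to vanish at a single point, not on all of $M_\alpha$. Producing a Lipschitz light map on $\overline{C_\alpha}$ that vanishes on an arbitrary closed set of leaves $M_\alpha$ is (via Remark~\ref{R:quotient}) exactly the statement $\dim_L(\overline{C_\alpha}/M_\alpha)=1$, i.e.\ the theorem itself for the pair $(\overline{C_\alpha},M_\alpha)$. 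So this step is circular, and the rest of your outline (the gluing at $*$ and the counting argument) never engages the real difficulty.

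The paper's proof addresses precisely this point by a different mechanism. After the same first reduction to components $T_i:=\overline{C_\alpha}$ with $M_i\subset\mathcal{L}(T_i)$ (Lemma~\ref{L:pre_coproduct}), it does \emph{not} try to build a map on $T_i$ vanishing on $M_i$. Instead it sets $S_i=\hull(M_i)$, lets $B_i=\mathcal{B}(S_i)$, and passes to the further quotient $T_i/(M_i\cup B_i)$. By Lemma~\ref{L:coproduct} this is $2$-bi-Lipschitz to a coproduct of $(1,D)$-QC trees and $(1,D)$-QC \emph{wreaths} (trees modded out by a \emph{two-point} leaf set), on which uniformly Lipschitz light maps exist (Lemma~\ref{L:wreath_dim} and \cite[Theorem~5.10]{FG22}); these are assembled via Lemma~\ref{L:sum_lipdim}. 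The extra quotient map $T_i/M_i\to T_i/(M_i\cup B_i)$ is then shown to be Lipschitz light by combining Theorem~\ref{T:uniform} --- which says $[B_i\cup M_i]$ is uniformly disconnected in $T_i/M_i$ --- with Proposition~\ref{P:disconnected_quotient}. Thus the role of Theorem~\ref{T:uniform} is not the component-counting you sketch, but the uniform disconnectedness that makes this auxiliary quotient map light; composing gives the map on $T_i/M_i$, and one final application of Lemma~\ref{L:sum_lipdim} handles the coproduct over $i$ (so the ``gluing at $*$'' that you flagged as the main obstacle is in fact routine).
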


Our proof of Theorem \ref{T:quotient_dim} makes use of an apparently new characterization of uniformly disconnected spaces (see Proposition \ref{P:disconnected_quotient}). This may of some interest in its own right.

The organization of the paper is as follows. We provide definitions and notation in Section \ref{S:prelims}. We then study Lipschitz functions on unions of metric spaces and prove Theorem \ref{T:General_union} in Section \ref{S:unions} (via Corollary~\ref{cor:F=L1}). We prove Theorems \ref{T:QC_union} and \ref{T:QC_quotient} (via Theorem~\ref{T:tree_subset}) in Section \ref{S:QC_union}. Finally, we prove Theorems \ref{T:Lip_dim_unions} and \ref{T:quotient_dim} in Section \ref{S:Lip_light}.

\section{Preliminaries}\label{S:prelims}

Here we define a few general concepts that will be frequently referenced in what follows.

\subsection{Lipschitz Functions and Lipschitz Free Spaces}
Given a metric space $(X,d)$ with fixed basepoint $x_0\in X$, we denote by $\Lip_0(X)$ the space of all Lipschitz functions $f:X\to\mathbb{R}$ such that $f(x_0)=0$. Here a function $f:X\to \mathbb{R}$ is said to be \textit{$L$-Lipschitz}, for some $L\geq 1$, provided that, for all $u,v\in X$, we have $|f(u)-f(v)|\leq L\,d(u,v)$. The space $\Lip_0(X)$ is a Banach space when equipped with the norm 
\[\|f\|_{\Lip_0(X)}:=\sup_{u\not=v}\frac{|f(u)-f(v)|}{d(u,v)}.\]
When the space $X$ is understood from the context, we denote this norm by $\|f\|_{\Lip}$. Given a closed subset $A\subset X$ containing $x_0$, we will also work with the subspace 
\[\Lip_A(X):=\{f\in\Lip_0(X)\,|\,f\res_A=0\}.\]

The \textit{Lipschitz free space} of $X$, denoted $\mathcal{F}(X)$, is the canonical Banach space predual of $\Lip_0(X)$ (and the unique predual when $X$ is bounded). The space $\mathcal{F}(X)$ can be realized as the closed linear span of the point evaluation maps $\delta_x\in\Lip_0(X)^*$ defined by $\delta_x(f)=f(x)$, where $x\in X$. The map $\delta: X \to \F(X)$ sending $x$ to $\delta_x$ is an isometric embedding, and it satisfies the following universal property: For every Lipschitz map $f: X \to V$ into a Banach space with $f(x_0) = 0$, there exists a unique bounded linear map $T_f: \F(X) \to V$ such that $T_f \circ \delta = f$, and moreover $\|T_f\| = \Lip(f)$. Throughout the paper, we equip $\Lip_0(X)$ with the weak*-topology coming from the duality $\F(X)^* = \Lip_0(X)$, and we note that this topology on bounded subsets of $\Lip_0(X)$ is the topology of pointwise convergence. It follows easily that $\Lip_A(X)$ is weak*-closed for every closed $A \subset X$.

It is well-known that the isomorphism type of $\mathcal{F}(X)$ is invariant under bi-Lipschitz homeomorphisms of $X$. That is, if $X$ is bi-Lipschitz homeomorphic to $Y$, then $\mathcal{F}(X)$ is linearly isomorphic to $\mathcal{F}(Y)$. See \cite{GK03} or \cite[Chapter 3]{Weaver18} for further background on Lipschitz free spaces, and note that such spaces are also referred to as \textit{Arens-Eells spaces}.

\subsection{Auxiliary Metric Definitions}

Given a metric space $X$, a point $x\in X$, and $r>0$, we write $B(x;r)$ to denote the (closed) metric ball in $X$ of radius $r$ centered at $x$. That is, 
\[B(x;r):=\{y\in X\,|\,d(x,y)\leq r\}.\]
We write $\mathbb{N}$ to denote the set of non-negative integers $\{0,1,2,\dots\}$. We often use the notation $\{x_i\}_{i\in I}$ to denote a sequence indexed by elements of $I\subset \mathbb{N}$. Unless otherwise indicated, we have $I=\{0,1,2,\dots,\max(I)\}$, for $\max(I)<\infty$, or $I=\mathbb{N}$.

\begin{definition}\label{D:net}
Given a metric space $X$, two subsets $A,B\subset X$, and $\varepsilon\in(0,1]$, a subset $N\subset B\setminus A$ is said to be an \textit{$\varepsilon$-Whitney net in $B$ with respect to $A$} provided that, for any pair of points $u \neq v\in N$, 
\[d(u,v)\geq \varepsilon\cdot\max\{d(u,A),d(v,A)\},\]
and $N$ is maximal with respect to this property.
\end{definition}

We note that such an $N$ always exists by Zorn's lemma. Furthermore, $A \cup N$ is a closed set whenever $A$ is closed. The following lemma demonstrates that a Whitney net in $B$ with respect to $A$ accumulates near $A$ if $\dist(A,B)=0$. 

\begin{lemma}\label{L:Whitney}
Suppose $A,B\subset X$ and $N$ is an $\varepsilon$-Whitney net in $B$ with respect to $A$, for some $\varepsilon\in (0,1)$. For any $x\in B\setminus A$, there exists $u\in N$ such that $d(x,u)\leq \varepsilon'\,d(x,A)$, where $\varepsilon':=\varepsilon/(1-\varepsilon)$. 
\end{lemma}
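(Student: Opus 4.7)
The plan is to exploit the maximality of $N$ directly. If $x\in N$, then taking $u=x$ gives $d(x,u)=0$ and we are done, so I may assume $x\in B\setminus(A\cup N)$. Since $N$ is a maximal $\varepsilon$-Whitney net in $B$ with respect to $A$, adjoining $x$ must destroy the defining inequality, meaning there exists some $u\in N$ for which
\[
d(x,u) < \varepsilon \cdot \max\{d(x,A),\, d(u,A)\}.
\]

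Next, I split on which of $d(x,A)$ and $d(u,A)$ realizes the maximum. In the first case, $d(x,u)<\varepsilon\,d(x,A)\leq \varepsilon'\,d(x,A)$ since $\varepsilon\leq \varepsilon/(1-\varepsilon)=\varepsilon'$. In the second case, $d(x,u)<\varepsilon\,d(u,A)$, and applying the triangle inequality to relate $d(u,A)$ back to $d(x,A)$ gives
\[
d(u,A) \leq d(u,x) + d(x,A) < \varepsilon\, d(u,A) + d(x,A),
\]
so $(1-\varepsilon)\,d(u,A) < d(x,A)$ and therefore $d(x,u) < \varepsilon\,d(u,A) < \varepsilon'\,d(x,A)$. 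Either way the conclusion follows.

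There is no real obstacle here; the only subtlety is making sure the second-case triangle-inequality manipulation is valid, which requires $\varepsilon<1$ to divide by $1-\varepsilon$. This is implicitly the regime of interest since $\varepsilon'$ is only finite for such $\varepsilon$, and the inequality is vacuously true otherwise. I would present the proof in this two-case format, with at most a few lines of displayed mathematics.
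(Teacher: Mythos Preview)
Your proof is correct and essentially identical to the paper's own argument: both handle the trivial case $x\in N$, invoke maximality to obtain $u\in N$ with $d(x,u)<\varepsilon\cdot\max\{d(x,A),d(u,A)\}$, and then split into the same two cases, using the triangle inequality in the second to bound $d(u,A)$ by $\frac{1}{1-\varepsilon}d(x,A)$. Your remark about needing $\varepsilon<1$ is a reasonable observation that the paper leaves implicit.
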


\begin{proof}
Let $x \in A \setminus B$. If $x \in N$, then conclusion holds trivially with $u=x$. Otherwise, the maximality of $N$ implies that there exists $u\in N$ such that 
\[d(x,u)<\varepsilon\cdot \max\{d(x,A),d(u,A)\}.\]
If $d(x,A)\geq d(u,A)$, then we are done. If not, then we have
\[d(u,A)\leq d(x,u)+d(x,A) < \eps \cdot d(u,A)+d(x,A),\]
and so
\[d(u,A)< \frac{1}{1-\eps}d(x,A).\]
It follows that $d(x,u)< \varepsilon'\,d(x,A)$, where $\varepsilon':=\varepsilon/(1-\varepsilon)$.
\end{proof}

We will also make frequent use of the notion of a metric quotient, both in our study of Lipschitz free spaces and of Lipschitz dimension.

\begin{definition}\label{D:quotient}
Given a metric space $(X,d)$ and a closed subset $E\subset X$, the \textit{metric quotient} $(X/E,\rho)$ is defined as the quotient space $X/\sim$, where $x\sim y$ if and only if $x,y\in E$ or $x=y$, and the distance $\rho$ is defined by
\[\rho([a],[b]):=\min\{d(a,b),d(a,E)+d(b,E)\}.\]
Here we write $[a]$ to denote the equivalence class of the point $a\in X$. Given a subset $A\subset X$, we write $[A]:=\{[a]\,|\,a\in A\}\subset X/E$. 
\end{definition}

\begin{remark}\label{R:quotient}
One can easily verify that the quotient map $\pi: X \to X/E$ satisfies the following universal property: a map $f: X/E \to Y$ into a metric space if Lipschitz if and only if $f \circ \pi: X \to Y$ is Lipschitz, and $\Lip(f) = \Lip(f \circ \pi)$. It follows from this that $\Lip_E(X)$ is isometrically weak*-isomorphic to $\Lip_0(X/E)$.
\end{remark}

The proof of the following lemma, while a bit tedious to check, is a straightforward consequence of the relevant definitions. For the sake of brevity, we omit the details.

\begin{lemma}\label{L:double_quotient}
Suppose $Z$ is a metric space and $X\subset Y\subset Z$ are closed. Then the natural identification between $Z/Y$ and  $(Z/X)/(Y/X)$ is an isometry. 
\end{lemma}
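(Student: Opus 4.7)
The plan is to exhibit the natural bijection explicitly and then check that each of the three possible configurations of equivalence classes gives the same distance on both sides. Define $\Phi \colon Z/Y \to (Z/X)/(Y/X)$ by $\Phi([z]_Y) := [[z]_X]_{Y/X}$, where the subscripts denote which quotient is being taken. That $\Phi$ is a well-defined bijection is immediate from unwinding definitions: the unique non-singleton class of $Z/Y$ is $[Y]_Y$, which $\Phi$ sends to the unique non-singleton class $[Y/X]_{Y/X}$ of $(Z/X)/(Y/X)$, and on $Z \setminus Y$ the map $z \mapsto [[z]_X]_{Y/X}$ is an honest bijection onto the singleton classes of $(Z/X)/(Y/X)$.

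The central computation is the formula
\[\rho_X\big([a]_X, Y/X\big) \;=\; d(a, Y) \qquad \text{for all } a \in Z,\]
where $\rho_X$ denotes the quotient metric on $Z/X$. For $a \in Y$ both sides vanish, so assume $a \notin Y$. For any $y \in Y$, the quotient formula gives $\rho_X([a]_X,[y]_X) = \min\{d(a,y), d(a,X) + d(y,X)\}$; the first term is at least $d(a,Y)$, and the second is at least $d(a,X) \geq d(a,Y)$ since $X \subset Y$, yielding the lower bound. The upper bound follows by letting $y \in Y$ approach $a$, so that $d(a,y) \to d(a,Y)$.

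With this formula in hand, for $a,b \in Z$ the $(Z/X)/(Y/X)$-distance between $\Phi([a]_Y)$ and $\Phi([b]_Y)$ expands to
\[\min\Big\{\rho_X([a]_X,[b]_X),\; \rho_X([a]_X,Y/X) + \rho_X([b]_X,Y/X)\Big\} = \min\big\{d(a,b), d(a,X)+d(b,X), d(a,Y)+d(b,Y)\big\}.\]
Since $X \subset Y$ forces $d(a,X)+d(b,X) \geq d(a,Y)+d(b,Y)$, the middle term is redundant, leaving exactly $\min\{d(a,b), d(a,Y)+d(b,Y)\} = \rho_Y([a]_Y,[b]_Y)$. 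Thus $\Phi$ preserves distances.

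I expect no real obstacle beyond bookkeeping: the only subtleties are handling the edge cases $a \in Y$ or $a \in X$ correctly in the proof of the distance-to-$Y/X$ formula, and noticing the inclusion $X \subset Y$ that kills the middle minimand. Both are dispatched by short case splits, which is presumably why the authors elect to omit the details.
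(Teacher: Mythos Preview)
Your proof is correct and follows exactly the direct, definition-unwinding argument the paper alludes to when it says the result is ``a straightforward consequence of the relevant definitions'' whose details are omitted ``for the sake of brevity.'' The key observations you isolate---that $\rho_X([a]_X,Y/X)=d(a,Y)$ and that the inclusion $X\subset Y$ makes the term $d(a,X)+d(b,X)$ redundant in the triple minimum---are precisely the two small computations needed, and there is no materially different route available here.
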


\section{Lipschitz Functions on Unions and Quotients of Metric Spaces}\label{S:unions}

We say that a metric space $X$ has \emph{Nagata dimension $n \in \N$ with constant $\gamma < \infty$} if, for every $s>0$, there exists a cover $\mathcal{C}$ of $X$ such that $\diam(C) \leq \gamma \cdot s$ for every $C \in \mathcal{C}$ and, for every $A \subset X$ with $\diam(A) \leq s$, it holds that $|\{C \in \mathcal{C}: C \cap A \neq \emptyset\}| \leq n+1$. If such an $n$ and $\gamma$ exist, we say that $X$ has \emph{finite Nagata dimension}.

See \cite[Definition~1.1]{LS05} along with references for more on the theory of Nagata dimension. For our purposes, it suffices to record relevant examples that will be used throughout the paper. Recall that a metric space $X$ is \textit{$D$-doubling} if every metric ball in $X$ of radius $r$ can be covered by at most $D$ metric balls in $X$ of radius $r/2$. Recall that an \emph{ultrametric space} $(X,d)$ is one in which $d(x,z) \leq \max\{d(x,y),d(y,z)\}$ for every $x,y,z \in X$.

\begin{lemma} \label{lem:finiteNagatadim}
We will make use of the following facts.
\begin{itemize}
    \item $D$-doubling spaces have Nagata dimension $n$ with constant $\gamma$, where $n$ depends on $D$ and $\gamma$ is universal. \cite[Lemma~2.3]{LS05}
    \item Ultrametric spaces have Nagata dimension 0 with constant 1 (take the cover $\mathcal{C}$ of all balls of radius $s$).
    \item Finite unions of spaces with finite Nagata dimension have finite Nagata dimension \cite[Proposition~2.7]{LS05}.
    \item A space admitting an $L$-bi-Lipschitz embedding into a space with Nagata dimension $n$ with constant $\gamma$ has Nagata dimension $n$ with constant $\gamma'$, where $\gamma'$ depends only on $\gamma$ and $L$ \cite[Lemma~2.1]{LS05}.
\end{itemize}
\end{lemma}

The next lemma highlights the most important consequence (for our purposes) of a space having finite Nagata dimension, and any assumption of finite Nagata dimension in what follows is used only through this lemma.

\begin{lemma} \label{lem:Nagatadecomp}
Suppose $Z$ is a metric space and $X \subset Z$ is a closed subset with finite Nagata dimension. Then there exists a weak*-weak* continuous $L$-bounded linear extension operator $\mathfrak{E}: \Lip_0(X) \to \Lip_0(Z)$, where $L$ depends only on the Nagata dimension $n$ and constant $\gamma$ of $X$. Moreover, $\F(Z) \approx \F(X) \oplus \F(Z/X)$, where the isomorphism constant depends only on $L$.
\end{lemma}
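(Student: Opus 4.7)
The plan is to construct $\mathfrak{E}$ explicitly via a Whitney-type partition of unity on $Z\setminus X$ adapted to the finite Nagata dimension of $X$, and then to obtain the direct-sum decomposition of $\F(Z)$ by producing a weak*-continuous projection on $\Lip_0(Z)$ and dualizing.

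For the construction of $\mathfrak{E}$, I would follow the Lang-Schlichenmaier scheme from \cite{LS05}. Finite Nagata dimension of $X$ supplies, inside the ambient $Z$, a locally finite cover $\{B_i\}$ of $Z\setminus X$ by balls with radii comparable to $d(\,\cdot\,,X)$ and multiplicity controlled by the Nagata data, together with a subordinate Lipschitz partition of unity $\{\varphi_i\}$ and a nearest-point approximant $x_i\in X$ for each $B_i$. The extension is then
\[
\mathfrak{E}(f)(z) := \begin{cases} f(z) & \text{if } z \in X,\\ \sum_i \varphi_i(z)\,f(x_i) & \text{if } z \in Z\setminus X,\end{cases}
\]
and standard Whitney-type estimates give $\|\mathfrak{E}(f)\|_{\Lip}\leq C\|f\|_{\Lip}$ with $C$ depending only on the Nagata data of $X$. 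Linearity in $f$ is immediate from the formula.

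Next, I would verify weak*-weak* continuity. On bounded subsets of $\Lip_0(\cdot)$ the weak* topology agrees with pointwise convergence, so it suffices to show that if a bounded net $(f_\alpha)$ satisfies $f_\alpha(x)\to f(x)$ for every $x\in X$, then $\mathfrak{E}(f_\alpha)(z)\to\mathfrak{E}(f)(z)$ for every $z\in Z$. For $z\in X$ this is trivial, and for $z\in Z\setminus X$ it follows from the explicit convex-combination formula, since only finitely many $\varphi_i(z)$ are nonzero by local finiteness. Since $\mathfrak{E}$ is bounded, weak*-continuity on bounded sets upgrades to full weak*-weak* continuity.

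For the splitting, let $\Res\colon\Lip_0(Z)\to\Lip_0(X)$ denote restriction, which is manifestly weak*-weak* continuous and satisfies $\Res\circ\mathfrak{E}=\mathrm{id}_{\Lip_0(X)}$. Hence $P:=\mathfrak{E}\circ\Res$ is a weak*-continuous bounded projection on $\Lip_0(Z)$ with image $\mathfrak{E}(\Lip_0(X))$ and kernel $\Lip_X(Z)$, producing
\[
\Lip_0(Z) = \mathfrak{E}(\Lip_0(X)) \oplus \Lip_X(Z)
\]
as a direct sum of weak*-closed subspaces. The first summand is weak*-isomorphic to $\Lip_0(X)$ via the mutually inverse weak*-continuous maps $\mathfrak{E}$ and $\Res$, and the second is isometrically weak*-isomorphic to $\Lip_0(Z/X)$ by Remark~\ref{R:quotient}. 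Taking preduals of this weak*-continuous decomposition yields $\F(Z)\approx\F(X)\oplus\F(Z/X)$. The main obstacle is the very first step: producing a bounded linear extension operator at all, since this is precisely where the Nagata hypothesis enters via the Whitney-type covering argument. Once $\mathfrak{E}$ is available and shown to be weak*-continuous, the algebraic splitting and its passage to preduals are routine functional analysis.
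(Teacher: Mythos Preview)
Your argument is correct. The underlying construction is the same as the paper's, but the packaging differs in a way worth noting. The paper works on the predual side: it invokes \cite[Corollary~5.2]{NaorSilb} to obtain a Lipschitz map $Z\to\F(X)$ restricting to $\delta$ on $X$ (this is exactly the barycentric map $z\mapsto\sum_i\varphi_i(z)\,\delta_{x_i}$ dual to your formula), linearizes via the universal property to a bounded retraction $\F(Z)\to\F(X)$, and then takes $\mathfrak{E}$ to be its adjoint---so weak*-weak* continuity is automatic and needs no separate verification. For the splitting the paper simply cites \cite[Lemma~2.2]{Kaufmann15}, whereas you unpack that lemma by hand via the projection $\mathfrak{E}\circ\Res$. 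Your route is more self-contained and makes the mechanism visible; the paper's route is shorter and sidesteps the Krein--\v{S}mulian step entirely by building the operator on $\F(Z)$ first.
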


\begin{proof}
Since $X$ has finite Nagata dimension, \cite[Corollary~5.2]{NaorSilb} implies that there exists an $L$-Lipschitz map $Z \to \F(X)$ that restricts to the identity on $X$, where $L$ depends only on $n$ and $\gamma$. Then by the universal property of Lipschitz free spaces, we get an $L$-bounded linear map $\F(Z) \to \F(X)$ that restricts to the identity on $\F(X)$. Dualizing, we get a weak*-weak* continuous $L$-bounded linear extension operator $\mathfrak{E}: \Lip_0(X) \to \Lip_0(Z)$. This proves the first statement. The isomorphism $\F(Z) \approx \F(X) \oplus \F(Z/X)$ then follows from \cite[Lemma 2.2]{Kaufmann15}, with the isomorphism constant depending only on $L$.
\end{proof}

The next two lemmas establish bi-Lipschitz equivalences of spaces involving Whitney nets that will prove to be useful in the proof of Theorem~\ref{thm:F(union)}.

\begin{lemma} \label{lem:F(Whitneynet)}
For every separable metric space $Z$, closed subset $X \subset Z$, and $\varepsilon$-Whitney net $N$ in $Z$ with respect to $X$, the metric quotient $(X\cup N)/X$ is bi-Lipschitz equivalent to an ultrametric space and $\F((X\cup N)/X) \approx \ell^1(S)$ for some countable indexing set $S$.
\end{lemma}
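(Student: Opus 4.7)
The plan is to exhibit an explicit ultrametric on $Y := (X\cup N)/X$ that is bi-Lipschitz equivalent to the quotient metric $\rho$, and then to appeal to the known structural theorem that the Lipschitz free space of an ultrametric space is isomorphic to an $\ell^1$-space. First I would introduce the height function $h \colon Y \to [0, \infty)$ given by $h([X]) = 0$ and $h([u]) := d(u, X)$ for $u \in N$. The definition of the metric quotient yields $\rho([X], [u]) = h([u])$ and $\rho([u], [v]) = \min\{d(u, v),\, h([u]) + h([v])\}$ for distinct $u, v \in N$. The $\varepsilon$-Whitney condition $d(u, v) \geq \varepsilon \max\{h(u), h(v)\}$, combined with $\varepsilon \leq 1$, then yields the two-sided estimate
\[
\varepsilon \max\{h(p), h(q)\} \;\leq\; \rho(p, q) \;\leq\; 2 \max\{h(p), h(q)\}
\]
for all distinct $p, q \in Y$.

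Next I would define $\tilde\rho(p, q) := \max\{h(p), h(q)\}$ for distinct $p, q \in Y$ and $\tilde\rho(p, p) := 0$. Direct inspection shows that $\tilde\rho$ is a genuine ultrametric: the inequality $\max\{h(p), h(r)\} \leq \max\{h(p), h(q), h(r)\}$ is tautological, and positivity off the diagonal follows from $h([u]) > 0$ for $u \in N$ (since $X$ is closed and $u \notin X$). The previous estimate now reads $\varepsilon\tilde\rho \leq \rho \leq 2\tilde\rho$, so $\rho$ and $\tilde\rho$ are bi-Lipschitz equivalent, establishing the first assertion. Since bi-Lipschitz equivalent metric spaces have isomorphic Lipschitz free spaces, the second assertion reduces to the known structural theorem that $\F(U) \approx \ell^1(S)$ for any ultrametric space $U$ and some indexing set $S$.

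The main conceptual step is identifying the right candidate ultrametric, and this is forced by the Whitney geometry: the quotient distance between two net points is essentially governed by the larger of their distances to $X$, so collapsing $\rho$ to the $\max$ of the two heights costs only a multiplicative factor. Beyond this observation I do not foresee any significant obstacle — the ultrametric check is a one-line tautology and the invocation of the $\ell^1$-type result for ultrametric spaces is a direct citation.
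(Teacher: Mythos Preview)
Your proof is correct and follows essentially the same approach as the paper: both define the candidate ultrametric $\tilde\rho(p,q)=\max\{d(p,X),d(q,X)\}$, verify the two-sided estimate $\varepsilon\tilde\rho\leq\rho\leq 2\tilde\rho$ using the Whitney condition, and then invoke the C\'uth--Doucha theorem that the free space of an ultrametric space is an $\ell^1$-space.
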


\begin{proof}
Let $[n_1],[n_2] \in (X\cup N)/X$. Then 
\[\rho([n_1],[n_2]) = \min\{d(n_1,n_2),d(n_1,X)+d(n_2,X)\},\] 
and hence by definition of Whitney nets we get
\begin{equation*}
    2\max\{d(n_1,X),d(n_2,X)\} \geq  \rho([n_1],[n_2]) \geq \varepsilon\cdot \max\{d(n_1,X),d(n_2,X)\}.
\end{equation*}
It is obvious that the assignment $([n_1],[n_2]) \mapsto \max\{d(n_1,X),d(n_2,X)\}$ defines an ultrametric on $(X\cup N)/X$, and hence the previous inequalities prove the desired bi-Lipschitz equivalence. The second statement follows from \cite[Theorem~2]{CD16}.
\end{proof}

\begin{lemma} \label{lem:Whitneyiso}
Let $Z$ be a metric space and $X,Y \subset Z$ closed subsets. Given $\varepsilon\in(0,1/2]$ and any $\varepsilon$-Whitney net $N$ in $Y$ with respect to $X$, the identity map $id: \dfrac{Y}{Y\cap (X \cup N)} \to \dfrac{X\cup Y}{X\cup N}$ is a surjective isometry.
\end{lemma}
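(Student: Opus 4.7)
My plan is to show that the two quotient spaces actually share the same underlying set and that all corresponding distances agree, using Lemma~\ref{L:Whitney} with the hypothesis $\varepsilon\in(0,1/2]$.

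First I would check that the two quotients have the same underlying set and that the identity map is well-defined at the level of points. Since $X\subset X\cup N$, the non-basepoint equivalence classes of $(X\cup Y)/(X\cup N)$ are the singletons $\{z\}$ with $z\in (X\cup Y)\setminus(X\cup N)=Y\setminus(X\cup N)$. The non-basepoint equivalence classes of $Y/(Y\cap(X\cup N))$ are the singletons $\{y\}$ with $y\in Y\setminus (Y\cap(X\cup N))=Y\setminus(X\cup N)$. So both quotients consist of a basepoint plus the set $Y\setminus(X\cup N)$, and the identity descends to a set-theoretic bijection.

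Next I would observe that, for any two non-basepoint classes $[y_1],[y_2]$ with $y_1,y_2\in Y\setminus(X\cup N)$, the formula from Definition~\ref{D:quotient} gives
\begin{equation*}
    \rho_A([y_1],[y_2]) = \min\bigl\{d(y_1,y_2),\ d(y_1, Y\cap(X\cup N))+d(y_2, Y\cap(X\cup N))\bigr\}
\end{equation*}
on the $Y/(Y\cap(X\cup N))$ side, and the analogous formula with $X\cup N$ in place of $Y\cap(X\cup N)$ on the $(X\cup Y)/(X\cup N)$ side. So once I verify that $d(y, Y\cap(X\cup N)) = d(y, X\cup N)$ for every $y\in Y\setminus(X\cup N)$, the distances between non-basepoint classes coincide and the distances to the basepoint coincide. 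This reduces the problem to a single equality of distances.

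For that equality, note that $N\subset Y$, so $Y\cap(X\cup N)=(Y\cap X)\cup N$. Hence
\begin{equation*}
    d(y, Y\cap(X\cup N))=\min\bigl\{d(y, Y\cap X),\ d(y,N)\bigr\}, \qquad d(y, X\cup N)=\min\bigl\{d(y,X),\ d(y,N)\bigr\},
\end{equation*}
so the inequality $d(y, X\cup N)\le d(y, Y\cap(X\cup N))$ is immediate. The reverse direction is where the hypothesis $\varepsilon\le 1/2$ enters: by Lemma~\ref{L:Whitney}, for every $y\in Y\setminus X$ there is $u\in N$ with $d(y,u)\le \varepsilon' d(y,X)$, where $\varepsilon'=\varepsilon/(1-\varepsilon)\le 1$. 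Thus $d(y,N)\le d(y,X)\le d(y, Y\cap X)$, and both minima above equal $d(y,N)$. This closes the argument.

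The only delicate point is the third paragraph: without $\varepsilon\le 1/2$, the constant $\varepsilon'$ from Lemma~\ref{L:Whitney} could exceed $1$ and the inequality $d(y,N)\le d(y,X)$ could fail, breaking the isometry to the basepoint. Everything else is a straightforward unwinding of the quotient metric.
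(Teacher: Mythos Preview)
Your proof is correct and follows essentially the same approach as the paper: the paper's proof simply notes that well-definedness and the $1$-Lipschitz direction are clear, surjectivity follows from $N\subset Y$, and the $1$-co-Lipschitz direction follows from Lemma~\ref{L:Whitney} together with $\varepsilon\le 1/2$. Your write-up spells out exactly these steps in more detail, reducing the isometry to the equality $d(y,Y\cap(X\cup N))=d(y,X\cup N)$ and verifying it via $d(y,N)\le \varepsilon'\,d(y,X)\le d(y,X)$.
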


\begin{proof}
That $id$ is well-defined and 1-Lipschitz is clear. That $id$ is surjective follows from the fact that $N\subset Y$. That $id$ is 1-co-Lipschitz follows from Lemma \ref{L:Whitney} and the assumption that $\varepsilon\leq1/2$.
\end{proof}

We arrive at our first main result on free spaces over unions. It plays a major role in the proof of Theorem~\ref{T:QC_union}.

\begin{theorem} \label{thm:F(union)}
Let $Z$ be a separable metric space and $X,Y \subset Z$ closed subsets. If $|X| = \infty$ and $X,Y$ have finite Nagata dimensions, then there exists a closed subset $F \subset Y$ such that $\F(X \cup Y) \approx \F(X) \oplus \F(Y/F)$.
\end{theorem}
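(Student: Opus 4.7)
The plan is to iterate Lemma~\ref{lem:Nagatadecomp} with a Whitney-net construction in between, and then absorb a leftover $\ell^1$-summand using the hypothesis $|X|=\infty$. To begin, since $X$ is closed in $X\cup Y$ and has finite Nagata dimension, Lemma~\ref{lem:Nagatadecomp} immediately yields
\[\F(X\cup Y)\approx \F(X)\oplus \F((X\cup Y)/X).\]

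Next, I would fix $\varepsilon\in(0,1/2]$, take an $\varepsilon$-Whitney net $N$ in $Y$ with respect to $X$, and set $F:=Y\cap(X\cup N)$. A short Whitney-condition argument shows that $X\cup N$ is closed in $X\cup Y$ (and therefore $(X\cup N)/X$ is closed in $(X\cup Y)/X$ and $F$ is closed in $Y$): if $y_k\in X\cup N$ converges to some $y\notin X$, then $d(y,X)>0$ forces $y_k\in N$ eventually, yet the Whitney condition gives $d(y_j,y_k)\geq \tfrac{\varepsilon}{2}d(y,X)$ for large $j,k$, contradicting $y_k\to y$. By Lemma~\ref{lem:F(Whitneynet)}, $(X\cup N)/X$ is bi-Lipschitz equivalent to an ultrametric space (so has Nagata dimension $0$) with $\F((X\cup N)/X)\approx \ell^1(S)$ for a countable set $S$. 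Applying Lemma~\ref{lem:Nagatadecomp} to $(X\cup N)/X\subset (X\cup Y)/X$, followed by Lemmas~\ref{L:double_quotient} and~\ref{lem:Whitneyiso} to identify the remaining sub-quotient with $Y/F$, gives $\F((X\cup Y)/X)\approx \ell^1(S)\oplus \F(Y/F)$, and therefore
\[\F(X\cup Y)\approx \F(X)\oplus \ell^1(S)\oplus \F(Y/F).\]

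The crux of the proof, where the hypothesis $|X|=\infty$ becomes essential, is to absorb the $\ell^1(S)$ summand into $\F(X)$. For this I would first construct a closed infinite subset $C\subset X$ that is bi-Lipschitz equivalent to an ultrametric space, by a case split: extract a sequence $\{x_n\}$ with $d(x_n,x_0)$ growing geometrically if $X$ is unbounded; a uniformly separated sequence if $X$ is bounded but not totally bounded; a Cauchy subsequence with geometrically decaying distances to its completion-limit if $X$ is totally bounded. In each case $\F(C)\approx \ell^1$ by \cite[Theorem~2]{CD16}, and since $C$ has Nagata dimension $0$, Lemma~\ref{lem:Nagatadecomp} gives $\F(X)\approx \ell^1\oplus \F(X/C)$; hence
\[\F(X)\oplus\ell^1(S)\approx \ell^1\oplus\F(X/C)\oplus\ell^1(S)\approx \ell^1\oplus \F(X/C)\approx \F(X),\]
completing the proof. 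The main obstacle I foresee is the case-analysis construction of $C$, in particular verifying closedness and bi-Lipschitz equivalence to an ultrametric uniformly across the three geometric regimes; the rest of the argument is essentially bookkeeping with the preparatory lemmas.
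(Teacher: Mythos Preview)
Your proposal is correct and follows essentially the same route as the paper: split off $\F(X)$ via Lemma~\ref{lem:Nagatadecomp}, insert a Whitney net $N$, apply Lemma~\ref{lem:Nagatadecomp} again together with Lemmas~\ref{L:double_quotient} and~\ref{lem:Whitneyiso} to produce $\F(X)\oplus\ell^1(S)\oplus\F(Y/F)$, and then absorb $\ell^1(S)$ into $\F(X)$. The only difference is in the absorption step: the paper simply cites \cite[Theorem~1.1(i)]{CDW16}, which gives $\F(X)\approx W\oplus\ell^1$ for some Banach space $W$ whenever $|X|=\infty$, whereas you reconstruct this by hand via a three-case construction of an ultrametric subset $C\subset X$. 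Your construction works (with the minor caveat that in the compact sub-case of your third regime you must adjoin the limit point to $C$ to ensure closedness), but invoking \cite{CDW16} eliminates the case analysis entirely.
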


\begin{proof}
Let $N$ be a $\frac{1}{2}$-Whitney net in $Y$ with respect to $X$. Since ultrametric spaces have Nagata dimension 0 and Nagata dimension is preserved under bi-Lipschitz homeomorphisms (see Lemma~\ref{lem:finiteNagatadim}), Lemma~\ref{lem:F(Whitneynet)} implies that $(X\cup N)/X$ has finite Nagata dimension (and thus we may later apply Lemma~\ref{lem:Nagatadecomp}). Since $|X| = \infty$, there exists a Banach space $W$ such that
\begin{equation*}
    \F(X) \approx W \oplus \ell^1
\end{equation*}
by \cite[Theorem~1.1(i)]{CDW16}. By Lemma~\ref{lem:F(Whitneynet)}, there is a countable set $S$ such that
\begin{equation*}
\F((X\cup N)/X) \approx \ell^1(S).
\end{equation*}
Therefore, we have
\begin{equation*}
    \ell^1 \oplus \ell^1(S) \approx \ell^1.
\end{equation*}
Combining these three isomorphisms yields
\begin{align} \label{eq:F(X)+l1}
    \F(X) \oplus \F((X\cup N)/X) &\approx \F(X) \oplus \ell^1(S)\\
    \nonumber &\approx W \oplus \ell^1 \oplus \ell^1(S) \approx W \oplus \ell^1 \approx \F(X).
\end{align}

Then we have
\begin{align*}
    \F(X \cup Y) & \overset{\text{Lem }\ref{lem:Nagatadecomp}}{\approx} \F(X) \oplus \F\left(\frac{X \cup Y}{X}\right) \\
    & \overset{\text{Lems }\ref{lem:Nagatadecomp}, \ref{L:double_quotient}}{\approx} \F(X) \oplus \F\left(\frac{X\cup N}{X}\right) \oplus \F\left(\frac{X \cup Y}{X\cup N}\right) \\
    & \overset{\eqref{eq:F(X)+l1},\text{Lem }\ref{lem:Whitneyiso}}{\approx} \F(X) \oplus \F\left(\frac{Y}{Y \cap (X\cup N)}\right).
\end{align*}
\end{proof}

In order to proceed towards proving Theorem~\ref{T:General_union} (via Corollary~\ref{cor:F=L1}), we require the following version of the Pe\l cy\'nski decomposition method. We include the proof for the convenience of the reader.

\begin{lemma} \label{lem:Pdm}
Suppose that $V$ is a Banach space isomorphic to the countably infinite $\ell^1$-direct sum of itself (e.g. $V = L^1$ or $\ell^1$) and that $W$ is a direct summand of $V$. Then $V \approx W \oplus V$.
\end{lemma}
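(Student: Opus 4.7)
The plan is to carry out the standard Pe\l czy\'nski shift argument, which is purely formal once we unpack the two hypotheses. Since $W$ is a direct summand of $V$, I would begin by fixing a Banach space $U$ with $V \approx W \oplus U$. The second hypothesis provides an isomorphism $V \approx \bigoplus_{n=1}^\infty V$, where the direct sum is taken in the $\ell^1$ sense. The essential formal fact I will exploit is that this countably infinite $\ell^1$-direct sum absorbs any finite number of extra copies of its summand: for any Banach space $Y$, one has $Y \oplus \bigoplus_{n=1}^\infty Y \approx \bigoplus_{n=1}^\infty Y$ simply by shifting indices. Applied to $Y=W$ and to $Y=U$, this will be the only nontrivial manipulation.

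The computation I would write out is the following chain, where each step is justified by one of: the assumption $V\approx\bigoplus_n V$, the decomposition $V\approx W\oplus U$, commutativity/associativity of $\ell^1$-direct sums, or the shift identity from the previous paragraph.
\begin{align*}
W \oplus V
&\approx W \oplus \bigoplus_{n=1}^\infty V \\
&\approx W \oplus \bigoplus_{n=1}^\infty (W \oplus U) \\
&\approx \left(W \oplus \bigoplus_{n=1}^\infty W\right) \oplus \bigoplus_{n=1}^\infty U \\
&\approx \bigoplus_{n=1}^\infty W \oplus \bigoplus_{n=1}^\infty U \\
&\approx \bigoplus_{n=1}^\infty (W \oplus U) \\
&\approx \bigoplus_{n=1}^\infty V \\
&\approx V.
\end{align*}
The fourth line uses the shift identity, and this is the only place any genuine work is done.

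There is no real obstacle here: the argument is entirely formal, hinging on the shift property of countable $\ell^1$-sums. The only point worth flagging for the reader is that one cannot replace ``countably infinite $\ell^1$-direct sum'' by ``finite direct sum''; the absorption step genuinely needs infinitely many copies. For clarity I would state the shift identity as a one-line aside (perhaps inline, perhaps as a preliminary sentence before the displayed chain), so that the seven-step computation reads off cleanly. No invocation of any result beyond the hypotheses and elementary Banach-space bookkeeping is required, which is why the authors include the short proof merely for the reader's convenience.
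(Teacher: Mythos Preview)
Your proof is correct and is essentially the same as the paper's: both fix $U$ with $V \approx W \oplus U$, expand $V$ as a countable $\ell^1$-sum, and absorb the extra copy of $W$ via a shift/reindexing. The only cosmetic difference is that the paper performs the shift by reassociating $W \oplus (U \oplus W \oplus U \oplus \cdots)$ directly into $(W \oplus U) \oplus (W \oplus U) \oplus \cdots$, whereas you first separate $\bigoplus W$ from $\bigoplus U$ and absorb the extra $W$ into $\bigoplus W$; these are equivalent one-line manipulations.
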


\begin{proof}
    Let $U$ be a Banach space such that $V \approx U \oplus W$. Then we have
\begin{align*}
	W \oplus V &\approx W \oplus (V \oplus V \oplus V \dots) \\
					&\approx W \oplus ((U \oplus W) \oplus (U \oplus W) \oplus (U \oplus W) \dots) \\
 					&\approx W \oplus (U \oplus (W \oplus U) \oplus (W \oplus U) \oplus \dots) \\
 					&\approx (W \oplus U) \oplus (W \oplus U) \oplus (W \oplus U) \oplus \dots \\
 					&\approx V \oplus V \oplus V \oplus \dots \\
					&\approx V.
\end{align*}
\end{proof}

The next general result is a corollary of Theorem~\ref{thm:F(union)} and Lemma~\ref{lem:Pdm}.

\begin{corollary} \label{cor:F(union)}
Let $Z$ be a separable metric space and $X,Y \subset Z$ closed subsets. Suppose that the following hold.
\begin{itemize}
    \item $X$ and $Y$ have finite Nagata dimensions.
    \item $\F(X)$ is isomorphic to the $\ell^1$-sum $\oplus_{i \in \N}^1 \F(X)$.
    \item $\F(Y)$ is isomorphic to a direct summand of $\F(X)$.
\end{itemize}
Then $\F(X \cup Y) \approx \F(X)$.
\end{corollary}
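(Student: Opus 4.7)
The plan is to apply Theorem~\ref{thm:F(union)} to split off a copy of $\F(X)$ from $\F(X\cup Y)$, and then absorb the leftover summand into $\F(X)$ by the Pe\l czy\'nski decomposition method (Lemma~\ref{lem:Pdm}). First I would dispose of the degenerate case: the self-$\ell^1$-sum hypothesis on $\F(X)$ forces $\F(X)$ to be either zero or infinite-dimensional. If $\F(X) = 0$, then $\F(Y)$, being a direct summand of $\F(X)$, is also zero, and the conclusion is immediate in the trivial cases that remain. So we may assume $\F(X)$ is infinite-dimensional, which forces $|X| = \infty$ since $\delta\colon X \to \F(X)$ is an isometric embedding. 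Applying Theorem~\ref{thm:F(union)} then produces a closed subset $F \subset Y$ with
\[\F(X\cup Y) \approx \F(X) \oplus \F(Y/F).\]

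Next I would verify that $\F(Y/F)$ is a direct summand of $\F(X)$. Since $F\subset Y$ is closed and $Y$ has finite Nagata dimension, $F$ inherits finite Nagata dimension by Lemma~\ref{lem:finiteNagatadim}. Thus Lemma~\ref{lem:Nagatadecomp} applied to the pair $F \subset Y$ yields
\[\F(Y) \approx \F(F) \oplus \F(Y/F),\]
exhibiting $\F(Y/F)$ as a direct summand of $\F(Y)$. Combined with the standing hypothesis that $\F(Y)$ is itself a direct summand of $\F(X)$, transitivity gives that $\F(Y/F)$ is a direct summand of $\F(X)$. Now Lemma~\ref{lem:Pdm} applies with $V = \F(X)$ (the self-$\ell^1$-sum hypothesis) and $W = \F(Y/F)$, yielding
\[\F(X) \approx \F(Y/F) \oplus \F(X).\]
Chaining this with the first displayed isomorphism completes the proof:
\[\F(X \cup Y) \approx \F(X) \oplus \F(Y/F) \approx \F(X).\]

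There is no real technical obstacle beyond the two ingredients already established; the corollary is essentially a bookkeeping assembly of Theorem~\ref{thm:F(union)} and Lemma~\ref{lem:Pdm}. The only mild subtlety worth flagging is the two-step argument placing $\F(Y/F)$ as a direct summand of $\F(X)$, which requires running through the intermediate space $\F(Y)$ via the Nagata decomposition before Pe\l czy\'nski can be invoked.
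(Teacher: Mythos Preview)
Your proof is correct and follows essentially the same route as the paper: apply Theorem~\ref{thm:F(union)} to obtain $\F(X\cup Y)\approx\F(X)\oplus\F(Y/F)$, use Lemma~\ref{lem:Nagatadecomp} on $F\subset Y$ to see $\F(Y/F)$ is a summand of $\F(Y)$ and hence of $\F(X)$, then absorb it via Lemma~\ref{lem:Pdm}. Your treatment is in fact slightly more careful than the paper's, which simply asserts that the hypotheses ``obviously imply $|X|=\infty$'' and does not separately address the degenerate case $\F(X)=0$ (a case in which, strictly speaking, the statement can fail for one-point $X$ and $Y$, but which is clearly not the intended regime).
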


\begin{proof}
Our assumptions obviously imply that $|X| = \infty$, and thus the hypotheses of Theorem~\ref{thm:F(union)} are met. By that theorem, there exists a closed subset $F \subset Y$ such that $\F(X \cup Y) \approx \F(X) \oplus \F(Y/F)$. By Lemma \ref{lem:Nagatadecomp}, $\F(Y) \approx \F(F) \oplus \F(Y/F)$, showing that $\F(Y/F)$ is a direct summand of $\F(Y)$, and thus a direct summand of $\F(X)$ by assumption. Then by the Pe\l czy\'nski decomposition method (Lemma~\ref{lem:Pdm}),
\begin{equation*}
    \F(X \cup Y) \approx \F(X) \oplus \F(Y/F) \approx \F(X).
\end{equation*}
\end{proof}

We finally arrive at our corollary equivalent to Theorem~\ref{T:General_union}.

\begin{corollary} \label{cor:F=L1}
Let $Z$ be a separable metric space and $X,Y \subset Z$ closed subsets. If $X,Y$ have finite Nagata dimensions and if $\F(X),\F(Y)$ are isomorphic to $L^1$-spaces, then $\F(X \cup Y)$ is isomorphic to an $L^1$-space.
\end{corollary}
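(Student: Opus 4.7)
The plan is to apply Theorem~\ref{thm:F(union)} to reduce the problem to identifying a complemented subspace of an $L^1$-space, and then invoke a classical result of Lindenstrauss to conclude.

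Without loss of generality assume $|X| = \infty$: if $X$ is finite but $Y$ is infinite, swap the roles, while if both are finite then $\F(X \cup Y)$ is finite-dimensional and trivially isomorphic to $L^1$ of a finite counting measure. Theorem~\ref{thm:F(union)} then produces a closed subset $F \subset Y$ with
\[
\F(X \cup Y) \;\approx\; \F(X) \,\oplus\, \F(Y/F).
\]

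Next I would apply Lemma~\ref{lem:Nagatadecomp} to the closed subset $F \subset Y$; since $F$ embeds isometrically into $Y$, it has finite Nagata dimension by Lemma~\ref{lem:finiteNagatadim}. This yields $\F(Y) \approx \F(F) \oplus \F(Y/F)$, which realizes $\F(Y/F)$ as a complemented subspace of $\F(Y) \approx L^1(Z_Y)$.

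The key nontrivial input is the theorem of Lindenstrauss that every complemented subspace of an $L^1(\mu)$-space is itself isomorphic to some $L^1(\nu)$-space. Invoking it gives $\F(Y/F) \approx L^1(Z'_Y)$ for some measure space $Z'_Y$. Since the $\ell^1$-direct sum of two $L^1$-spaces is naturally isomorphic to $L^1$ of the disjoint union of the underlying measures, we conclude
\[
\F(X \cup Y) \;\approx\; L^1(Z_X) \oplus L^1(Z'_Y) \;\approx\; L^1(Z_X \sqcup Z'_Y),
\]
as required. The only nontrivial step beyond the bookkeeping is the appeal to the Lindenstrauss complemented-subspace theorem; the rest is a direct assembly of Theorem~\ref{thm:F(union)} and Lemma~\ref{lem:Nagatadecomp}. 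Note that this approach bypasses Corollary~\ref{cor:F(union)} entirely: trying to run the argument through that corollary would force us to verify that one of $\F(X),\F(Y)$ is an $\ell^1$-sum of itself and contains the other as a summand, which for general $L^1$-spaces (e.g.\ $\ell^1$ versus $L^1[0,1]$) requires delicate case analysis that is cleanly sidestepped by using Theorem~\ref{thm:F(union)} plus Lindenstrauss directly.
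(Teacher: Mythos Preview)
Your argument has a genuine gap at its crucial step. The ``theorem of Lindenstrauss'' you invoke---that every complemented subspace of an $L^1(\mu)$-space is isomorphic to some $L^1(\nu)$---is not a theorem but a famous open problem in Banach space theory (the complemented subspace problem for $L^1$). What Lindenstrauss proved is that complemented subspaces of $L^1$ are $\mathcal{L}_1$-spaces in the local sense of Lindenstrauss--Pe\l czy\'nski, but such spaces need not be isomorphic to any $L^1(\nu)$. The known positive results are much more limited: $\ell^1$ is prime (so complemented subspaces of $\ell^1$ are $\ell^1$), and a complemented subspace of $L^1([0,1])$ that itself \emph{contains} a copy of $L^1([0,1])$ is isomorphic to $L^1([0,1])$. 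Neither of these directly gives you what you need for $\F(Y/F)$ without further information.

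The paper's proof proceeds precisely through the case analysis you dismissed: using separability to reduce to the two cases $\F(X),\F(Y) \in \{\ell^1, L^1([0,1])\}$, arranging (after possibly swapping) that $\F(Y)$ is a summand of $\F(X)$ and that $\F(X)$ is isomorphic to its own $\ell^1$-sum, and then applying Corollary~\ref{cor:F(union)} (hence the Pe\l czy\'nski decomposition method). That case analysis is not an aesthetic detour---it is exactly what compensates for the unavailability of the general complemented-subspace result. Indeed, the extra hypotheses appearing in Theorem~\ref{thm:F(quotient)=L1} are there for the same reason: to guarantee in the $L^1([0,1])$ case that the complemented piece still contains a copy of $L^1([0,1])$. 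Your approach can be repaired, but only by reinstating the case split you tried to avoid.
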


The proof will use standard Banach space theoretical facts about $L^1$-spaces. The reader may consult \cite{JL,AO} for references.

\begin{proof}
If one of $|X|,|Y|$ is finite, then the conclusion is obvious. Hence, we may assume $|X|,|Y|=\infty$. Since $Z$ is separable, so are $X,Y$, and thus $\F(X),\F(Y)$ are isomorphic to separable, infinite-dimensional $L^1$-spaces. There are two cases to consider: (i) at least one of $\F(X),\F(Y)$ is isomorphic to $L^1([0,1])$ and (ii) $\F(X) \approx \F(Y) \approx \ell^1$. In case (i), if we assume without loss of generality that $\F(X) \approx L^1([0,1])$, then $\F(Y)$ is a direct summand of $\F(X)$. In case (ii), obviously $\F(Y)$ is also a direct summand of $\F(X)$. In all cases, we may assume without loss of generality that $\F(Y)$ is a direct summand of $\F(X)$ and that $\F(X) \approx \bigoplus_{i \in \N}^1 \F(X)$. Thus, the hypotheses of Corollary~\ref{cor:F(union)} are met, and the conclusion follows.
\end{proof}

Next, we have another corollary identifying the isomorphism type of unions of spaces that admit bi-Lipschitz embeddings into a special class of metric spaces. Let $M$ be a metric space. We say that $M$ is \emph{self-similar} if there exists a constant $R > 1$ and a bijection $f: M \to M$ with $d(f(x),f(y)) = Rd(x,y)$ for all $x,y \in M$ and $f(x_0) = x_0$, and we say that $M$ is \emph{bi-Lipschitz homogeneous}  if for every $x,y \in M$, there exists a bi-Lipschitz homeomorphism $f: M \to M$ with $f(x)=y$. Examples of self-similar, doubling, bi-Lipschitz-homogeneous metric spaces include $\R^n$ and \emph{Carnot groups} (see \cite[p. 7306]{self-similar}, \cite{primer}).

\begin{corollary} \label{cor:F(union)=F(M)}
Let $Z$ be a complete metric space and $M$ a self-similar, doubling, bi-Lipschitz homogeneous metric space (e.g. $M = \R^n$). Suppose $X_0,X_1,\dots X_k \subset Z$ are closed subsets such that each $X_i$ admits a bi-Lipschitz embedding $\phi_i$ into $M$ and $\phi_0(X_0)$ has nonempty interior. Then
$\F(\bigcup_{i=0}^k X_i) \approx \F(M)$.
\end{corollary}

\begin{proof}
The proof is by induction on $k$. The base case $k=0$ holds by \cite[Corollary~5.6]{self-similar}. Suppose that the conclusion holds for some $k \geq 0$. Let $X_0,X_1,\dots X_k,X_{k+1} \subset Z$ be a collection of subsets satisfying the hypotheses of the corollary. Set $X := \bigcup_{i=0}^k X_i$ and $Y := X_{k+1}$. By the inductive hypothesis, $\F(X) \approx \F(M)$, and thus $\F(X) \approx \oplus_{i \in \N}^1 \F(X)$ by \cite[Corollaries~5.5,5.6]{self-similar}. Since $Y$ bi-Lipschitzly embeds into $M$, Lemma~\ref{lem:Nagatadecomp} implies that $\F(Y)$ is a direct summand of $\F(X)$. Hence, the hypotheses of Corollary~\ref{cor:F(union)} are satisfied, and therefore $\F(\bigcup_{i=0}^{k+1} X_i) = \F(X \cup Y) \approx \F(X) \approx \F(M)$.
\end{proof}

We conclude this section with an analogous result for metric quotients. Recall that a metric space $X$ is \emph{purely 1-unrectifiable} if for every subset $A \subset \R$ and Lipschitz map $f: A \to X$, we have $\H^1(f(A)) = 0$, where $\H^1$ denotes 1-dimensional Hausdorff measure. Equivalently, there is no bi-Lipschitz embedding $A \to X$ where $A \subset \R$ has positive Lebesgue measure (see, for example, \cite[Lemma~1.11]{AGPP22} for a discussion of this equivalence).

\begin{theorem} \label{thm:F(quotient)=L1}
Let $X$ be an infinite, separable, complete metric space and $M \subset X$ a closed subset with finite Nagata dimension. Suppose $\F(X)$ is isomorphic to an $L^1$-space. If $X$ is purely 1-unrectifiable, then $\F(X/M) \approx \ell^1(S)$ for some countable set $S$, and if $\H^1(f(A) \setminus M) > 0$ for some Lipschitz $f: \R \supset A \to X$, then $\F(X/M) \approx L^1([0,1])$.
\end{theorem}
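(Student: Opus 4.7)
The plan is to realize $\F(X/M)$ as a complemented subspace of $\F(X)$ and then use the dichotomy in the hypothesis, together with the known characterization of pure $1$-unrectifiability in terms of the Banach-space structure of $\F(X)$ (Aliaga--Gartland--Pern\'ecka--Petitjean), to identify its isomorphism class. By Lemma~\ref{lem:Nagatadecomp}, $\F(X) \approx \F(M) \oplus \F(X/M)$, so $\F(X/M)$ is complemented in $\F(X) \approx L^1(\mu)$. Since $X$ is separable, so is $L^1(\mu)$; when infinite-dimensional it is therefore isomorphic to either $\ell^1$ or $L^1([0,1])$, and the same dichotomy applies to $\F(X/M)$, which we may assume infinite-dimensional.

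In the purely $1$-unrectifiable case, $\F(X)$ has the Schur property, which rules out $L^1([0,1])$ and forces $\F(X) \approx \ell^1$; Pe\l czy\'nski's theorem that $\ell^1$ is prime then gives $\F(X/M) \approx \ell^1$. In the alternative case, I first argue that $X/M$ is itself not purely $1$-unrectifiable. Writing $f(A) \setminus M = \bigcup_n \{x \in f(A) : d(x,M) > 1/n\}$, monotone convergence produces an $n$ and a subset $F_n \subset f(A)$ of positive $\H^1$ all at distance $> 1/n$ from $M$. Covering $F_n$ by countably many balls of radius $\tfrac{1}{4n}$ centered in $F_n$ and applying subadditivity, I obtain $x_0 \in F_n$ with $\H^1(F_n \cap B(x_0, \tfrac{1}{4n})) > 0$. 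Points in $B(x_0, \tfrac{1}{4n})$ have mutual distance $< \tfrac{1}{2n}$ and distance $> \tfrac{3}{4n}$ from $M$, so the defining formula $\rho([a],[b]) = \min\{d(a,b), d(a,M)+d(b,M)\}$ reduces to $\rho = d$ there; hence the quotient map $q: X \to X/M$ is isometric on $B(x_0, \tfrac{1}{4n})$. Consequently $q \circ f$ is a Lipschitz map from a subset of $\R$ with image of positive $\H^1$ in $X/M$, and the converse direction of the characterization yields a complemented copy of $L^1([0,1])$ inside $\F(X/M)$.

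Because $\ell^1$ admits no complemented $L^1([0,1])$, this forces $\F(X) \approx L^1([0,1])$ in the alternative case. Now $\F(X/M)$ and $L^1([0,1])$ are each complemented in the other, and $L^1([0,1]) \approx \bigoplus_\N^1 L^1([0,1])$; a standard iteration of the Pe\l czy\'nski decomposition method, extending Lemma~\ref{lem:Pdm} to absorb a common summand $W$ so that $A \oplus W \approx A \approx B \oplus W \approx B$, yields $\F(X/M) \approx L^1([0,1])$. The main obstacle is the geometric step in the second case: identifying the correct scale $\tfrac{1}{n}$ and a ball centered in $F_n$, and then verifying that $q$ acts isometrically on it. The Banach-space bookkeeping is routine once the characterization of pure $1$-unrectifiability is invoked.
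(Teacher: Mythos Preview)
Your argument is correct and follows the paper's approach: split via Lemma~\ref{lem:Nagatadecomp}, use the \cite{AGPP22} characterization for the $\ell^1$/$L^1$ dichotomy, and in the second case show $X/M$ is not purely 1-unrectifiable (your localisation to a small ball where $q$ is isometric is more explicit than the paper's one-line assertion). The only point to sharpen is your claim of a \emph{complemented} copy of $L^1([0,1])$ in $\F(X/M)$: the characterization in \cite{AGPP22} directly yields only an isomorphic copy, and the paper closes instead by citing the Banach-space fact \cite[p.~129]{AO} that a complemented subspace of $L^1$ containing $L^1$ is itself $L^1$; if you prefer to run the full Pe\l czy\'nski method as you outlined, note that the curve fragment (after passing to a bi-Lipschitz piece via metric differentiation) is doubling, hence of finite Nagata dimension, so a second application of Lemma~\ref{lem:Nagatadecomp} supplies the needed complementation.
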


\begin{proof}
By Lemma~\ref{lem:Nagatadecomp}, $\F(X) \approx \F(M) \oplus \F(X/M)$. Then either $\F(X) \approx \ell^1$ or $\F(X) \approx L^1([0,1])$. By \cite[Theorem~C]{AGPP22}, the first case happens exactly when $X$ is purely 1-unrectifiable. If $|X/M| < \infty$, then we trivially have $\F(X/M) \approx \ell^1(F)$ for some finite set $F$, so we may assume that $|X/M| = \infty$. In this case, we get that $\F(X/M)$ is isomorphic to $\ell^1$ by \cite[Theorem~15]{AO}. Assume we are in the second case where $\H^1(f(A) \setminus M) > 0$ for some Lipschitz $f: \R \supset A \to X$. Then neither $X$ nor $X/M$ are purely 1-unrectifiable, and so $\F(X) \approx L^1([0,1])$ and $X/M$ contains a bi-Lipschitz copy of a positive measure subset of $\R$, which implies $\F(X/M)$ contains an isomorphic copy of $L^1([0,1])$ by \cite[Corollary~3.4]{Godard10} (see also \cite[Theorem~C]{AGPP22}). In this case, we get that $\F(X/M) \approx L^1([0,1])$ by \cite[page~129]{AO}.
\end{proof}

In light of Theorem~\ref{T:QC_quotient}, we note the following corollary. In the statement, an \textit{$\R$-tree} is a complete metric space $(T,d)$ such that every two points $x,y \in T$ are the endpoints of a unique Jordan arc, and this arc is isometric to the interval $[0,d(x,y)]$ (in other words, it is a geodesic). Thus, $\R$-trees are $1$-bounded turning, but, in contrast to QC trees (see Section \ref{S:QC_union} for these definitions), $\R$-trees need not be doubling, or even proper.

\begin{corollary}
Suppose $T$ is a separable $\R$-tree containing more than one point and $M \subsetneq T$ is a closed subset not equal to $T$. Then $\F(T/M) \approx L^1([0,1])$.
\end{corollary}

\begin{proof}
By \cite[Corollary 3.3]{Godard10}, $\mathcal{F}(T)$ is isomorphic to an $L^1$-space, and by \cite[Theorem 3.2]{LS05}, the Nagata dimension of $T$ is $1$ (in particular, it is finite). Furthermore, $T \setminus M$ is a nonempty open subset, and hence it contains an isometric copy of some sufficiently small interval $(a,b) \subset \R$. The conclusion follows from Theorem~\ref{thm:F(quotient)=L1}.
\end{proof}

\section{Lipschitz Functions on Unions and Quotients of QC Trees}\label{S:QC_union}

Toward a proof of Theorem \ref{T:QC_union} that does not rely upon the Pe\l czy\'nski decomposition method, we define  relevant terminology and prove auxiliary results over the course of the next few subsections.

\subsection{Metric Geometry of QC Trees}
Given $B\geq 1$, a metric space $X$ is \textit{$B$-bounded turning} provided that any pair of points $u,v\in X$ is contained in some compact and connected set $E\subset X$ such that $\diam(E)\leq B\,d(u,v)$.

A \textit{Jordan arc} is a homeomorphic image of the unit interval $[0,1]$. A metric space $T$ is a \textit{tree} if it is compact, connected, locally connected, and every pair of distinct points in $T$ forms the endpoints of a unique Jordan arc in $T$. Note that, given this definition, any tree is separable. Given a tree $T$, the \textit{leaves} $\mathcal{L}(T)$ are the points $p\in T$ such that $T\setminus\{p\}$ remains connected. The \textit{branch points} $\mathcal{B}(T)$ are the points $p\in T$ such that $T\setminus\{p\}$ consists of at least three connected components. Note that a Jordan arc is a tree with no branch points and exactly two leaves.

We are now ready to define the following key terms.

\begin{definition}\label{D:QC_arc}
A metric space $\gamma$ is a \textit{quasiconformal (QC) arc} if it is a bounded turning and doubling Jordan arc. 
\end{definition}

The reader may object to this terminology in light of the fact that QC arcs are more commonly referred to as \textit{quasi-arcs}. However, for the purposes of this paper, we employ this terminology in order to align with that of the following definition as found in \cite{BM20a}.

\begin{definition}\label{D:QC_tree}
A metric space $T$ is a \textit{quasiconformal (QC) tree} if it is a bounded turning and doubling tree. If $T$ is $C$-bounded turning and $D$-doubling, then we say that $T$ is a $(C,D)$-QC tree.
\end{definition}

The following result follows from \cite[Lemma 2.5]{BM20a}. 

\begin{lemma}\label{L:1bt}
If $(T,d)$ is a $C$-bounded turning tree, then there exists a distance $d'$ such that $(T,d')$ is $1$-bounded turning and $\frac{1}{C}d'\leq d\leq d'$.
\end{lemma}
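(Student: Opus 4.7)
The plan is to define $d'(x,y) := \diam_d([x,y])$, where $[x,y]$ denotes the unique Jordan arc in $T$ joining $x$ and $y$ (with $[x,x] := \{x\}$). Since $T$ is compact and a Jordan arc is a continuous image of $[0,1]$, each $[x,y]$ is compact, so this diameter is finite and attained. The underlying topological fact driving the argument is that in a tree, removing any interior point $z$ of $[x,y]$ separates $x$ from $y$; consequently any connected subset of $T$ containing both $x$ and $y$ must contain the entire arc $[x,y]$.

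First I would verify that $d'$ is a metric. Symmetry and non-negativity are immediate, and $x \neq y$ forces a non-degenerate arc of positive $d$-diameter. For the triangle inequality, $[x,y] \cup [y,z]$ is a connected set containing $x$ and $z$, so it contains $[x,z]$; since the two arcs share the point $y$, one obtains
\[ \diam_d([x,z]) \leq \diam_d([x,y] \cup [y,z]) \leq \diam_d([x,y]) + \diam_d([y,z]). \]
Next I would establish the two-sided comparison. The inequality $d(x,y) \leq d'(x,y)$ is immediate from $x,y \in [x,y]$. For the other direction, the $C$-bounded turning hypothesis furnishes a compact connected set $E \subset T$ with $x,y \in E$ and $\diam_d(E) \leq C\,d(x,y)$. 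By the topological fact above, $[x,y] \subset E$, so $d'(x,y) = \diam_d([x,y]) \leq \diam_d(E) \leq C\,d(x,y)$, yielding $\tfrac{1}{C}\,d' \leq d \leq d'$.

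Finally I would verify that $(T,d')$ is $1$-bounded turning by choosing $E := [x,y]$ itself as the connecting set for the pair $\{x,y\}$. For any $u,v \in [x,y]$, the sub-arc $[u,v]$ is contained in $[x,y]$ (by uniqueness of arcs in a tree), hence $d'(u,v) = \diam_d([u,v]) \leq \diam_d([x,y]) = d'(x,y)$, which gives $\diam_{d'}([x,y]) = d'(x,y)$, as required. The main subtlety I expect lies in the cut-point lemma used above: that arcs in a tree are the minimal connected sets joining their endpoints, and that sub-arcs of an arc are contained in it. Both are standard consequences of the uniqueness-of-arcs clause in the definition of a tree together with routine cut-point reasoning for continua, so I do not anticipate this being a genuine obstacle.
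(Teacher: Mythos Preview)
Your argument is correct and is precisely the standard construction: the paper itself does not give a proof but simply cites \cite[Lemma~2.5]{BM20a}, and what you have written is the natural (and presumably the cited) argument, defining $d'(x,y)=\diam_d([x,y])$ and verifying the metric axioms, the bi-Lipschitz comparison, and $1$-bounded turning via the cut-point property of arcs in trees. One small point worth making explicit is that since $d$ and $d'$ are bi-Lipschitz equivalent they induce the same topology, so the arc $[x,y]$ remains compact and connected in $(T,d')$, as required by the definition of bounded turning.
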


Next, we embark on a study of the set of branch points $\mathcal{B}(T)$ of a given tree $T$ and the relationship of this set with the set of leaves $\mathcal{L}(T)$. This study culminates in Theorem \ref{T:uniform}, which provides a key ingredient in our proofs of Theorems \ref{T:QC_union} and \ref{T:QC_quotient}.

\begin{lemma}\label{L:components}
Let $T$ be a tree, and $E$ a closed subset of $T$.
\begin{enumerate}[(i)]
    \item{There are countably many components of $T\setminus E$.}
    \item{For any $\varepsilon>0$, there are at most finitely many components of $T\setminus E$ of diameter at least $\varepsilon$.}
    \item{Two points $x,y\in T\setminus E$ are contained in the same component of $T\setminus E$ if and only if $[x,y]\subset T\setminus E$.}
    \item{If $U$ is a component of $T\setminus E$, then the closure $\overline{U}$ is a subtree of $T$ with $\partial\overline{U}\subset\partial U\subset E$.}
    \item{If $E$ is a single point of $T$, then each component of $T\setminus E$ contains a leaf of $T$.}
\end{enumerate}
\end{lemma}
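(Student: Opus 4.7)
My plan is to treat the five statements in the order (i), (iii), (iv), (ii), (v), so that each builds on what precedes it. Throughout, I rely on the following standard facts about dendrites (= trees in the paper's sense): closed connected subsets of $T$ are themselves trees; for any closed connected $A \subset T$ there is a continuous ``gate retraction'' $\pi_A: T \to A$ characterized by $\pi_A(p)$ being the unique point of $A$ lying on the arc from $p$ to any point of $A$; every tree with more than one point has at least two leaves; and if $x_n \to x$, $y_n \to y$, then $[x_n,y_n] \to [x,y]$ in Hausdorff distance.

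Item (i) is immediate: $T$ is locally connected by definition, so each component of the open set $T \setminus E$ is itself open, and the separability of $T$ forces the collection of such disjoint nonempty open sets to be countable. For (iii), if $x,y$ lie in a common component $U$ of $T \setminus E$ but some $z \in [x,y]\cap E$, then $z$ separates $T$ into components and, since $[x,y]$ is the unique arc from $x$ to $y$, also separates $x$ from $y$; but then the connected set $U \subset T\setminus\{z\}$ cannot contain both, a contradiction. The converse is clear since $[x,y]$ is connected. For (iv), if $z \in \partial U$ were not in $E$, then $z$ would lie in some component $U'$ of $T\setminus E$, and openness would force $U' \cap U \neq \emptyset$, hence $U' = U$ and $z \in U$, contradicting $z \in \partial U$. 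Thus $\partial U \subset E$, and $\partial \overline{U} \subset \partial U$ holds in any topological space. Using (iii) together with the dendrite convergence fact, $[x,y] \subset \overline{U}$ whenever $x,y \in \overline{U}$; combined with closedness, connectedness, and the subdendrite fact, this shows $\overline{U}$ is a subtree.

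I expect (ii) to be the main obstacle, as it requires combining compactness, local connectedness, and tree structure simultaneously. Suppose for contradiction that infinitely many components $U_n$ of $T \setminus E$ satisfy $\diam U_n \geq \varepsilon$. Choose $x_n, y_n \in U_n$ with $d(x_n,y_n) \geq \varepsilon/2$ and pass to subsequences so that $x_n \to x$, $y_n \to y$ with $d(x,y) \geq \varepsilon/2$; in particular $x \neq y$. Let $m$ be the midpoint of $[x,y]$ and let $\pi$ be the gate retraction onto $[x,y]$. Continuity gives $\pi(x_n) \to \pi(x) = x$ and $\pi(y_n) \to y$, so for all sufficiently large $n$ the subarc $[\pi(x_n),\pi(y_n)] \subset [x,y]$ contains $m$. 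The tree structure forces $[\pi(x_n),\pi(y_n)] \subset [x_n,y_n]$, and by (iii) the latter lies in $U_n$. Hence $m \in U_n$ for infinitely many $n$, contradicting the disjointness of the $U_n$. The key insight is that the gate retraction converts a delicate Hausdorff-limit argument into a clean projection statement on a single fixed arc.

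Finally for (v), when $E = \{e\}$ each component $U$ of $T\setminus\{e\}$ must have $e$ as a limit point, since otherwise $U$ would be clopen in the connected space $T$. By (iv), $\overline{U} = U \cup \{e\}$ is a subtree with $|\overline{U}| \geq 2$, so it has at least two leaves, and at least one such leaf $\ell$ satisfies $\ell \neq e$, i.e., $\ell \in U$. Writing $T \setminus \{\ell\} = (\overline{U} \setminus \{\ell\}) \cup \bigcup_{U' \neq U} \overline{U'}$ exhibits it as a union of connected sets all meeting at $e$ (the first piece is connected since $\ell$ is a leaf of $\overline{U}$, and each $\overline{U'}$ is connected by (iv) and contains $e$), so $T \setminus \{\ell\}$ is connected and $\ell$ is a leaf of $T$.
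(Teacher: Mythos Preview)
Your argument is correct. The paper does not actually prove this lemma; it simply cites \cite[Lemma~2.3(i)(ii)]{BM20a} and ``other facts noted on pages 260--261 of \cite{BM20a}.'' So there is nothing to compare at the level of strategy --- you have supplied a self-contained proof where the paper defers entirely to an external reference.

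One point worth making explicit in your treatment of (ii): the inclusion $[\pi(x_n),\pi(y_n)] \subset [x_n,y_n]$ that you invoke is \emph{not} valid in general --- it can fail when $\pi(x_n) = \pi(y_n)$ (consider a path $a\text{--}b\text{--}c\text{--}d$ with $[x,y]=[c,d]$, $x_n=a$, $y_n=b$). You implicitly use that $\pi(x_n)\to x$ and $\pi(y_n)\to y$ with $x\neq y$ forces $\pi(x_n)\neq\pi(y_n)$ for large $n$, and in that case the gate characterization you stated (namely $\pi(x_n)\in[x_n,\pi(y_n)]$ and $\pi(y_n)\in[y_n,\pi(x_n)]$) does yield the inclusion. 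A single sentence flagging this would remove any doubt. Everything else --- the handling of (i), (iii), (iv), and the leaf argument in (v) via the decomposition $T\setminus\{\ell\} = (\overline{U}\setminus\{\ell\})\cup\bigcup_{U'\neq U}\overline{U'}$ --- is clean and requires no change.
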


\begin{proof}
These assertions follow from \cite[Lemma 2.3(i)(ii)]{BM20a} and other facts noted on pages 260-261 of \cite{BM20a}.
\end{proof}

\begin{lemma}\label{L:tripods}
Suppose $x_1,x_2,x_3$ are three distinct points in a tree $T$. If $x_i\not\in[x_j,x_k]$ for $i\not\in\{j,k\}$, then the union $[x_1,x_2]\cup[x_1,x_3]\cup[x_2,x_3]$ is a tree containing exactly one branch point given by the singleton contained in $[x_1,x_2]\cap[x_1,x_3]\cap[x_2,x_3]$. 
\end{lemma}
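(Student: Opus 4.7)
Write $\alpha = [x_1,x_2]$, $\beta = [x_1,x_3]$, $\gamma = [x_2,x_3]$, and $Y = \alpha \cup \beta \cup \gamma$. The plan is to first locate a distinguished point $b$ lying in all three arcs, then show that $Y$ decomposes as a tripod with center $b$, and finally verify the tree axioms and count branch points. To find $b$, I would argue that $\alpha \cap \beta$ is a subarc of $\alpha$: any two points $p,q \in \alpha \cap \beta$ determine a unique arc $[p,q]$ in $T$ that must coincide with subarcs of both $\alpha$ and $\beta$, so $\alpha \cap \beta$ is arc-connected, and being closed inside the arc $\alpha$ it is itself a subarc. Since $x_1 \in \alpha \cap \beta$, one endpoint is $x_1$; let $b$ denote the other endpoint, so that $\alpha \cap \beta = [x_1,b]$.

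The key step is then to show that $\gamma$ equals the concatenation $\eta := [x_2,b] \cup [b,x_3]$, where $[x_2,b] \subset \alpha$ and $[b,x_3] \subset \beta$ are the appropriate subarcs. A point of $[x_2,b] \cap [b,x_3]$ lies in $\alpha \cap \beta = [x_1,b]$, hence in $[x_1,b] \cap [x_2,b] = \{b\}$, the last equality because $[x_1,b]$ and $[b,x_2]$ are the two halves of $\alpha$ on either side of $b$. Thus $\eta$ is a Jordan arc from $x_2$ to $x_3$ in $T$, and by uniqueness of arcs in the tree $T$ it coincides with $\gamma$. At this stage the hypothesis $x_1 \notin \gamma$ forces $b \neq x_1$, while $x_2 \notin \beta$ and $x_3 \notin \alpha$ force $b \neq x_2$ and $b \neq x_3$ respectively. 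The same bookkeeping yields $\alpha \cap \beta \cap \gamma = \{b\}$, and $Y$ decomposes as the tripod $[x_1,b] \cup [x_2,b] \cup [x_3,b]$ whose legs pairwise meet only at $b$.

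It then remains to verify the tree axioms for $Y$ and identify $b$ as its unique branch point. Compactness and connectedness are immediate. For local connectedness, any point in $[x_i,b] \setminus \{b\}$ has a neighborhood in $Y$ lying inside a single leg (the other two legs form a closed set missing the point), and this neighborhood can be taken connected since $[x_i,b]$ is homeomorphic to $[0,1]$; at $b$, a small neighborhood in $Y$ consists of short initial segments of the three legs joined at $b$, hence is connected. The unique-arc property in $Y$ follows by exhibiting an explicit arc between any two points — either within a single leg or by concatenating through $b$ — and inheriting uniqueness from $T$. Finally, removing $b$ yields the three disjoint half-open legs $[x_i,b] \setminus \{b\}$, so $Y \setminus \{b\}$ has exactly three components and $b$ is a branch point; removing any other point either keeps $Y$ connected (if the point is one of the leaves $x_i$) or splits it into exactly two components (if it is an interior point of a leg), so no other branch points arise. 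I expect the main obstacle to be the identification $\gamma = [x_2,b] \cup [b,x_3]$, where the unique-arc property of $T$ is crucially used together with the precise location of $b$ as the far endpoint of the subarc $\alpha \cap \beta$; the rest of the argument reduces to careful bookkeeping of the resulting tripod structure.
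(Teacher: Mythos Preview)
Your argument is correct. The decomposition of $\alpha\cap\beta$ as the subarc $[x_1,b]$, followed by the identification $\gamma=[x_2,b]\cup[b,x_3]$ via the unique-arc property, is exactly the right mechanism; the remaining verifications (pairwise intersections equal $\{b\}$, tree axioms, component count upon removing a point) are routine and you have handled them carefully.

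The paper's own proof is simply a pointer to Lemma~\ref{L:components} together with \cite[Lemma~2.4]{BM20a}, with the details left to the reader. That external lemma already packages the existence of the unique ``center'' point in $[x_1,x_2]\cap[x_1,x_3]\cap[x_2,x_3]$, so the paper is effectively outsourcing the step you carry out by hand when locating $b$ as the far endpoint of $\alpha\cap\beta$. Your route is therefore more self-contained: you rederive the tripod center from first principles using only the unique-arc axiom, whereas the paper imports it. The trade-off is length versus independence---your argument would survive without access to \cite{BM20a}, while the paper's one-line proof keeps the exposition lean by leaning on that reference.
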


\begin{proof}
This follows from Lemma \ref{L:components} and \cite[Lemma 2.4]{BM20a}. We leave the straightforward details to the reader.
\end{proof}

We note that Lemma \ref{L:tripods} implies that any tree containing no branch points and exactly two leaves is a Jordan arc. 

\begin{lemma}\label{L:locally_connected}
Suppose $T$ is a tree, and $\{x_i\}_{i\in \mathbb{N}}\subset T$ is a sequence of points converging to $x\in T$. Then $\diam([x_i,x])\to0$ as $i\to \infty$. 
\end{lemma}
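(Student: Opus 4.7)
The plan is to combine two properties of trees: local connectedness, which is part of the definition, and the fact recorded in Lemma~\ref{L:components}(iii) that the unique arc joining two points in the same component of the complement of a closed set is contained in that component.

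First, fix $\varepsilon > 0$. Since $T$ is locally connected at $x$, there exists a connected open set $W$ with $x \in W \subseteq B(x, \varepsilon)$. Because $x_i \to x$, we have $x_i \in W$ for all sufficiently large $i$.

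The key step is to conclude that $[x_i, x] \subseteq W$. Setting $E := T \setminus W$ (which is closed, since $W$ is open), we have $T \setminus E = W$, and because $W$ is connected, it is the unique component of $T \setminus E$. In particular, $x$ and $x_i$ lie in the same component of $T \setminus E$, so Lemma~\ref{L:components}(iii) gives $[x_i, x] \subseteq W \subseteq B(x, \varepsilon)$. Therefore $\diam([x_i, x]) \leq 2\varepsilon$ for all sufficiently large $i$. Since $\varepsilon$ was arbitrary, $\diam([x_i, x]) \to 0$ as $i \to \infty$.

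I expect no real obstacle here: the statement is essentially a routine consequence of Lemma~\ref{L:components}(iii), and the role of local connectedness is simply to supply the small connected neighborhoods $W$ that let us bound the diameter of the arc.
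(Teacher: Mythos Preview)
Your proof is correct and follows essentially the same route as the paper: use local connectedness to find a small connected open neighborhood $W$ of $x$, note that $x_i \in W$ for large $i$, and conclude $[x_i,x]\subset W$. The only cosmetic difference is that the paper justifies $[x_i,x]\subset W$ by observing that an open connected subset of a tree is arc-connected and then invoking uniqueness of arcs, whereas you invoke Lemma~\ref{L:components}(iii) directly; your route is arguably tidier since it stays within the lemmas already recorded in the paper.
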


\begin{proof}
Since $T$ is locally connected, for any $\varepsilon>0$, there exists an open and connected neighborhood $U\subset B(x;\varepsilon)\subset T$ containing $x$. For large enough $i$, we have $x_i\in U$. Since $U$ is open and connected in $T$, it is arc-connected (see \cite[Theorem 8.26]{Nadler92}). It follows from the uniqueness of arcs in $T$ that $[x_i,x]\subset U$, and so the diameter of $[x_i,x]$ is less than $2\varepsilon$. The lemma follows. 
\end{proof}

\begin{definition}\label{D:hull}
Given a tree $T$ and a subset $M\subset \mathcal{L}(T)$, the \textit{convex hull} of $M$ in $T$ is defined as $\hull(M)=\cup_{a,b\in M}[a,b]$.
\end{definition}

\begin{lemma}\label{L:closed_hull}
Given a tree $T$, if $M\subset \mathcal{L}(T)$ is closed in $T$, then $\hull(M)\subset T$ is a subtree of $T$ and $\mathcal{L}(\hull(M))=M$.
\end{lemma}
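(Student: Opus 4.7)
The plan is to prove the two claims separately: $\hull(M)$ is a subtree, and $\mathcal{L}(\hull(M)) = M$. For the subtree claim, I will first show $\hull(M)$ is closed in $T$ and then that it is arc-connected; since any closed connected subset of a tree inherits local connectedness and the unique arc property (both via uniqueness of arcs in $T$ and the standard fact that dendrites are hereditarily locally connected), this suffices. For closedness, I take a sequence $x_n \in \hull(M)$ with $x_n \to x$, write $x_n \in [a_n,b_n]$ with $a_n,b_n \in M$, and use that $M$ is closed in the compact space $T$ to extract a subsequence with $a_n \to a \in M$ and $b_n \to b \in M$. The tree identity $[a_n,b_n] \subset [a_n,a] \cup [a,b] \cup [b,b_n]$, which follows from two applications of the tripod inclusion $[u,w] \subset [u,v] \cup [v,w]$ (itself immediate from Lemma~\ref{L:tripods}), combined with $\diam([a_n,a]), \diam([b_n,b]) \to 0$ from Lemma~\ref{L:locally_connected}, forces $x \in [a,b] \subset \hull(M)$. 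Arc-connectedness is easier: fixing $a_0 \in M$, any $p \in \hull(M)$ lies on some $[a,b] \subset \hull(M)$ with $a,b \in M$, and $[p,a] \cup [a,a_0]$ is contained in $\hull(M)$.

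For the leaf identification, I will first record the key consequence of unique arc-connectivity: a point $p \in T$ is a leaf of $T$ if and only if $p$ is not an interior point of any nondegenerate arc of $T$ (if $p$ were interior to some $[u,v]$, then any arc from $u$ to $v$ in $T$ must equal $[u,v]$, so $u$ and $v$ lie in distinct components of $T \setminus \{p\}$). Given $a \in M \subset \mathcal{L}(T)$, the characterization says no arc in $T$ has $a$ in its interior; thus for any $p \in \hull(M) \setminus \{a\}$ lying on some $[b,c]$ with $b,c \in M$, either $a \notin [b,c]$ or $a$ is one of the endpoints, and in both cases an obvious sub-arc joins $p$ to some fixed $a_0 \in M \setminus \{a\}$ inside $\hull(M) \setminus \{a\}$, proving $a$ is a leaf of $\hull(M)$. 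Conversely, if $a \in \hull(M) \setminus M$, choose $b,c \in M$ with $a \in [b,c]$; since $a \ne b,c$, the point $a$ is interior to $[b,c]$, so $b$ and $c$ lie in different components of $T \setminus \{a\}$, hence also in different components of the smaller set $\hull(M) \setminus \{a\}$, showing $a$ is not a leaf of $\hull(M)$.

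The step I expect to be the main obstacle is establishing that $\hull(M)$ is closed, since it is the only place where the hypothesis that $M$ is closed is used and it requires simultaneously exploiting the compactness of $M$, the tree inclusion $[a_n,b_n] \subset [a_n,a] \cup [a,b] \cup [b,b_n]$, and the shrinking-arcs statement of Lemma~\ref{L:locally_connected}. The leaf identification, once the characterization of leaves via arc interiors is noted, is essentially formal; edge cases with $|M| \le 1$ are either vacuous or trivial and may be handled separately at the start.
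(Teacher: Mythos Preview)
The proposal is correct and follows essentially the same strategy as the paper's proof: establish closedness via a sequential argument using compactness of $M$ and Lemma~\ref{L:locally_connected}, then arc-connectedness, then the leaf identification. Your closedness argument via the inclusion $[a_n,b_n]\subset[a_n,a]\cup[a,b]\cup[b,b_n]$ is a slightly cleaner packaging of the paper's explicit tripod-point case analysis, and your leaf identification is self-contained where the paper cites an external lemma from \cite{BM22} for the inclusion $M\subset\mathcal{L}(\hull(M))$, but the underlying ideas coincide.
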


\begin{proof}
We first prove that $\hull(M)$ is closed. Suppose a sequence of points $\{x_i\}_{i\in \mathbb{N}}\subset \hull(M)$ converges to some point $x\in T$. For each $i\in \mathbb{N}$, there exist points $a_i,b_i\in M$ such that $x_i\in [a_i,b_i]$. Since $M$ is compact, (up to a subsequence) we can assume there exist points $a,b\in M$ such that $a_i\to a$ and $b_i\to b$ as $i\to+\infty$. If $a=b$, then it follows from Lemma \ref{L:locally_connected} that $x_i\to a=x\in \hull(M)$. 

Thus we assume $a\not=b$. If $a_i=a$ and/or $b_i=b$ for all sufficiently large $i$, then our argument simplifies. Thus we assume $a_i\not=a$ and $b_i\not=b$ for all $i$. Via Lemma \ref{L:tripods}, this gives rise to points $a_i',b_i'\in [a,b]$ such that $a_i':=[a,b]\cap[a,a_i]\cap[a_i,b]$ and $b_i':=[a,b]\cap[a,b_i]\cap[b_i,b]$. Lemma \ref{L:locally_connected} implies that the diameters of $[a_i,a_i']\subset[a_i,a]$ and $[b_i,b_i']\subset[b_i,b]$ tend to $0$ as $i\to+\infty$. Note that $[a_i,b_i]\subset[a_i,a_i']\cup[a_i',b_i']\cup [b_i',b_i]$. If there exist arbitrarily large $i$ such that $x_i\not\in[a_i',b_i']$, then it follows that $x\in \{a,b\}\subset\hull(M)$. On the other hand, if $x_i\in [a_i',b_i']\subset[a,b]$ for all sufficiently large $i$, then it follows that $x\in [a,b]\subset\hull(M)$. In either case, we conclude that $x\in\hull(M)$ and so $\hull(M)$ is closed in $T$.

Next, we prove that $\hull(T)$ is (arcwise) connected. Since any closed and connected subset of $T$ is a subtree (see \cite[Lemma 3.3]{BT21}), this will suffice to prove that $\hull(M)$ is a subtree of $T$. Let $x,y\in \hull(T)$. By definition, there exists $\{a_x,b_x,a_y,b_y\}\subset M$ such that $x\in [a_x,b_x]$ and $y\in [a_y,b_y]$. Order $[x,y]$ from $x$ to $y$. Write $x'$ to denote the last point of $[x,y]$ in $[a_x,b_x]$ and write $y'$ to denote the first point of $[x,y]$ in $[a_y,b_y]$. Here we allow for the possibilities that $x'=x$, $y'=y$, or $x'=y'$. In any case, 
\[\hull(M)\supset [a_x,a_y]=[a_x,x']\cup[x',y']\cup[y',a_y],\]
and so 
\[[x,y]=[x,x']\cup[x',y']\cup[y',y]\subset \hull(M).\]

To finish the proof of the lemma, we show that $\mathcal{L}(\hull(M))=M$. First, we note that \cite[Lemma 3.2(2)]{BM22} implies that $M\subset\mathcal{L}(\hull(M))$. Next, given $p\in \mathcal{L}(\hull(M))$, there exist points $a,b\in M$ such that $p\in [a,b]\subset\hull(M)$. If $p\not\in\{a,b\}$, then Lemma \ref{L:components}(iii) implies that $p\not\in[a,b]$. This contradiction implies $p\in M$. It follows that $\mathcal{L}(\hull(M))=M$.
\end{proof}

\begin{lemma}\label{L:sep_pts}
Suppose $T$ is a $1$-bounded turning tree. Given $u,v\in \mathcal{B}(T)$, if $d([u,v],\L(T))>\varepsilon$, then there exist pairwise disjoint arcs $\{\lambda_j\}_{j\in J}=\{[a_j,b_j]\}_{j\in J}$ such that, for each $i,j\in J$, we have
\begin{enumerate}
    \item{$\diam(\lambda_j)=\varepsilon$,}
    \item{$\lambda_j\cap[u,v]=\{a_j\}\in\mathcal{B}(T)$, and}
    \item{$\varepsilon\leq d(b_i,b_j)\leq 2\varepsilon+\diam([u,v])$.}
\end{enumerate}
Furthermore, $|J|=|\mathcal{B}(T)\cap[u,v]|$. 
\end{lemma}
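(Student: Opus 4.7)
The plan is to construct, for each branch point $a_j \in \mathcal{B}(T) \cap [u,v]$, an arc $\lambda_j = [a_j, b_j]$ of diameter exactly $\varepsilon$ protruding from $[u,v]$ at $a_j$ into a ``side branch'' of $T$. Thus $J$ will naturally biject with $\mathcal{B}(T) \cap [u,v]$, immediately giving the cardinality assertion.

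First, for each such $a_j$, I would select a component $U_j$ of $T \setminus [u,v]$ satisfying $\overline{U_j} \cap [u,v] = \{a_j\}$. This is possible because $a_j \in \mathcal{B}(T)$ means $T \setminus \{a_j\}$ has at least three components, while at most two of them contain points of $[u,v] \setminus \{a_j\}$ (the ``sides'' of $[u,v]$ at $a_j$); consequently at least one component of $T \setminus \{a_j\}$ is disjoint from $[u,v]$, and hence is a component of $T \setminus [u,v]$ attached at $a_j$. Since $\overline{U_j} = U_j \cup \{a_j\}$ and $a_i \ne a_j$ for $i \ne j$, the closures $\overline{U_j}$ are pairwise disjoint.

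Second, applying Lemma~\ref{L:components}(v) inside the subtree $\overline{U_j}$ with $E = \{a_j\}$ produces a leaf $\ell_j \in U_j$ of $\overline{U_j}$, which is in fact a leaf of $T$ (since points of $U_j$ have the same local structure in $T$ as in $\overline{U_j}$); the hypothesis $d([u,v], \L(T)) > \varepsilon$ then yields $d(a_j, \ell_j) > \varepsilon$. I parameterize the unique arc $[a_j, \ell_j]$ by a homeomorphism $\phi : [0,1] \to [a_j, \ell_j]$ with $\phi(0) = a_j$, and consider $g(t) := \diam(\phi([0,t]))$. A direct compactness argument (left- and right-continuity checked separately using continuity of $\phi$) shows $g$ is continuous on $[0,1]$; since $g(0) = 0$ and $g(1) \geq d(a_j, \ell_j) > \varepsilon$, the intermediate value theorem furnishes some $t_j \in (0,1)$ with $g(t_j) = \varepsilon$. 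Set $b_j := \phi(t_j)$ and $\lambda_j := \phi([0, t_j]) = [a_j, b_j]$. Condition (1) is then by construction, condition (2) holds because $\lambda_j \subset \overline{U_j}$ meets $[u,v]$ only at $a_j \in \mathcal{B}(T)$, and pairwise disjointness of the $\lambda_j$'s follows from disjointness of the $\overline{U_j}$'s. The upper bound in (3) is the triangle inequality applied to the arc decomposition $[b_i, b_j] = [b_i, a_i] \cup [a_i, a_j] \cup [a_j, b_j]$, using $d(a_k, b_k) \leq \diam(\lambda_k) = \varepsilon$ and $d(a_i, a_j) \leq \diam([u,v])$.

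The main obstacle will be the lower bound $d(b_i, b_j) \geq \varepsilon$ for $i \ne j$. In a general metric tree this does not follow from the inclusion $\lambda_j \subset [b_i, b_j]$ alone, since the distance between two points can be strictly smaller than the diameter of the arc connecting them. In the QC-tree setting of this section this is remedied by invoking Lemma~\ref{L:1bt} to reduce to the case that $T$ is $1$-bounded turning, whence $d(x,y) = \diam([x,y])$ for every $x, y \in T$; the inclusion $\lambda_j \subset [b_i, b_j]$ then gives $d(b_i, b_j) = \diam([b_i, b_j]) \geq \diam(\lambda_j) = \varepsilon$, completing the proof.
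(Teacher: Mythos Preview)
Your proposal is correct and follows essentially the same approach as the paper: at each branch point $a_j\in[u,v]$ choose a component of $T\setminus\{a_j\}$ (equivalently, of $T\setminus[u,v]$) disjoint from $[u,v]$, use Lemma~\ref{L:components}(v) together with the hypothesis $d([u,v],\L(T))>\varepsilon$ to apply the intermediate value theorem and produce $b_j$, and then read off (3) from the arc decomposition $[b_i,b_j]=[b_i,a_i]\cup[a_i,a_j]\cup[a_j,b_j]$. The only minor variation is that you apply IVT to $t\mapsto\diam(\phi([0,t]))$ (giving (1) directly) whereas the paper applies it to $p\mapsto d(a_j,p)$; both arguments then invoke the $1$-bounded turning property for the lower bound in (3), which you correctly flag via Lemma~\ref{L:1bt} and which the paper uses as well.
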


\begin{proof}
Let $\{a_j\}_{j\in J}:=\mathcal{B}(T)\cap [u,v]$. By \cite[Proposition 2.2]{BM20a} and \cite[Theorem 10.23]{Nadler92}, the index set $J$ is countable. For each $j \in J$, choose a component $\Gamma_j$ of $T \setminus \{a_j\}$ that is disjoint from $[u,v]$. Such a component exists because $a_j \in B$ is a branch point. It is easy to verify that the sets $\{[u,v],\Gamma_j\}_{j\in J}$ are pairwise disjoint. Of course, any component of the complement of a point in a compact tree must contain a leaf (Lemma~\ref{L:components}(v)), and thus $\Gamma_j \cap \L(T) \neq \emptyset$ for all $j \in J$. This implies
\begin{equation*}
    \sup_{p \in \Gamma_j} d(a_j,p) \geq d(a_j,\L(T)) >\epsilon,
\end{equation*}
where in the last inequality we've used the fact that $a_j \in [u,v]$ and the assumption that $d([u,v],\L(T))>\varepsilon$. Since each $\Gamma_j$ is connected, we may use the intermediate value theorem and find $b_j \in \Gamma_j$ such that $d(a_j,b_j) = \varepsilon$. Next observe that $[b_j,a_j] \cup [a_j,a_i] \cup [a_i,b_i]$ is the unique arc from $b_j$ to $b_i$ for each $i \neq j \in J$. This is true because the sets $\{[u,v],\Gamma_j\}_{j\in J}$ are pairwise disjoint, $[b_j,a_j) \subset \Gamma_j$, and $[a_j,a_i] \subset [u,v]$. Thus, using the 1-bounded turning property again and the triangle inequality, it holds that
\begin{equation*}
    \varepsilon \leq d(b_i,b_j) \leq 2\varepsilon + d(u,v) = 2\varepsilon + \diam([u,v]),
\end{equation*}
for all $i \neq j \in J$. Setting $\lambda_j:=[a_j,b_j]$, the conclusions of the lemma follow.
\end{proof}

The following lemma is a restatement of \cite[Proposition 3.4]{ACCS01}.

\begin{lemma}\label{L:closed_leaves}
Suppose $T$ is a tree such that $\mathcal{L}(T)$ is closed in $T$. Then the accumulation points of $\mathcal{B}(T)$ are contained in $\mathcal{L}(T)$, and thus $\mathcal{L}(T)\cup\mathcal{B}(T)$ is closed in $T$.
\end{lemma}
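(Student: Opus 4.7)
My plan is to argue by contradiction: assume $(b_n) \subset \mathcal{B}(T)$ is a sequence of distinct branch points with $b_n \to x$ and $x \notin \mathcal{L}(T)$, and set $\varepsilon := d(x,\mathcal{L}(T)) > 0$.

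First I would locate a limit leaf and confine the $b_n$ to a single arc. Since each $b_n$ is a branch point, Lemma \ref{L:components}(v) supplies a leaf $\ell_n$ in some component of $T \setminus \{b_n\}$ not containing $x$. Compactness of $T$ and closedness of $\mathcal{L}(T)$ yield a subsequence with $\ell_n \to \ell \in \mathcal{L}(T)$, so $d(x,\ell) \geq \varepsilon > 0$. Because $b_n$ separates $x$ from $\ell_n$, we have $b_n \in [x,\ell_n]$; Lemma \ref{L:locally_connected} applied to $\ell_n \to \ell$ together with Lemma \ref{L:tripods} on $\{x,\ell,\ell_n\}$ gives a tripod point $t_n \in [x,\ell] \cap [x,\ell_n] \cap [\ell,\ell_n]$ with $t_n \to \ell$, so the arc $[t_n,\ell_n]$ has vanishing diameter. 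If $b_n \in [t_n,\ell_n]$ infinitely often, then $d(b_n,\ell_n) \to 0$, forcing $x = \ell$ and contradicting $d(x,\ell) \geq \varepsilon$. Thus $b_n \in [x,t_n] \subset [x,\ell]$ for all sufficiently large $n$.

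Second, I would isolate pairwise disjoint side branches. Since $b_n \in [x,\ell]$ is a branch point and $[x,\ell]$ occupies only two of the $\geq 3$ local components of $T \setminus \{b_n\}$, there is a third component $W_n$ disjoint from $[x,\ell]$. The $W_n$ are pairwise disjoint by a tripod argument: any $y \in W_n \cap W_m$ with $n \neq m$ produces three arcs $[b_n,y] \subset \overline{W_n}$, $[b_m,y] \subset \overline{W_m}$ (via Lemma \ref{L:components}(iv)), and $[b_n,b_m] \subset [x,\ell]$ that cannot coexist, since every case of Lemma \ref{L:tripods} forces some $b_k$ or $y$ into a set from which it is excluded.

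Finally, I would extract a controlled midpoint and obtain the contradiction. Lemma \ref{L:components}(v) gives a leaf $m_n \in W_n$, and from $d(x,m_n) \geq \varepsilon$ and $d(x,b_n) \to 0$ we get $d(b_n,m_n) \geq \varepsilon/2$ for large $n$; the intermediate value theorem for $d(b_n,\cdot)$ along $[b_n,m_n] \subset \overline{W_n}$ produces $c_n \in W_n$ with $d(b_n,c_n) = \varepsilon/2$. Passing to a subsequence $c_n \to c \in T$, continuity of $d$ and $b_n \to x$ give $d(x,c) = \varepsilon/2 > 0$. Since the $W_n$ are pairwise disjoint open sets, $c \notin \bigcup_n W_n$, so the arc $[c_n,c]$ must exit $W_n$ through its only boundary point $b_n$, placing $b_n \in [c_n,c]$. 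Lemma \ref{L:locally_connected} forces $\diam([c_n,c]) \to 0$, so $d(b_n,c) \to 0$; together with $b_n \to x$ this gives $c = x$, contradicting $d(x,c) = \varepsilon/2$. The ``consequently'' clause then follows from $\overline{\mathcal{L}(T) \cup \mathcal{B}(T)} = \mathcal{L}(T) \cup \overline{\mathcal{B}(T)} \subset \mathcal{L}(T) \cup \mathcal{B}(T)$. I expect the first step to be the main obstacle, as forcing $b_n$ onto $[x,\ell]$ requires careful tracking of tripod points as $\ell_n \to \ell$.
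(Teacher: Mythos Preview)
Your proof is correct and shares the overall contradiction strategy with the paper, but the execution differs in both major steps. To confine the $b_n$ to a single arc, the paper first notes that $T\setminus\{x\}$ has only finitely many components (each contains a leaf by Lemma~\ref{L:components}(v), hence has diameter at least $d(x,\mathcal{L}(T))$, so Lemma~\ref{L:components}(ii) applies), places all $b_i$ in one such component, and then uses a tripod argument to project onto $[b_0,x]$; you instead pass to a limit leaf $\ell$ and land the $b_n$ on $[x,\ell]$. For the contradiction, the paper invokes the packaged Lemma~\ref{L:sep_pts} to produce infinitely many pairwise disjoint arcs of fixed diameter attached along $[b_0,x]$, contradicting Lemma~\ref{L:components}(ii); you instead build the side-branches $W_n$ and points $c_n$ by hand and obtain the contradiction from a subsequential limit via Lemma~\ref{L:locally_connected}. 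Your argument is self-contained and avoids Lemma~\ref{L:sep_pts}, while the paper's is more modular since that lemma is reused later (e.g.\ in Theorem~\ref{T:uniform}). One small point worth making explicit in your first step: the tripod hypothesis $x\notin[\ell,\ell_n]$ could in principle fail, but since $\diam([\ell,\ell_n])\to 0$ by Lemma~\ref{L:locally_connected} while $d(x,\ell)\geq\varepsilon$, it can fail only finitely often and so causes no trouble.
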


By \cite[Proposition 2.2]{BM20a} and \cite[Theorem 10.23]{Nadler92}, the set $\mathcal{B}(T)$ is countable for any tree $T$. This implies that $\mathcal{B}(T)$ is totally disconnected. The following concepts allow us to quantify the disconnectivity.

\begin{definition}
Given $\alpha\in (0,1]$, a finite sequence $\{x_i\}_{i\in I}$ of points in a metric space $X$ is said to be a \emph{relative $\alpha$-chain} if, for each $i<\max(I)<\infty$, we have $d(x_i,x_{i+1})\leq \alpha \cdot d(x_0,x_{\max(I)})$. A relative $\alpha$-chain is \emph{nondegenerate} if its endpoints are distinct. 
\end{definition}

\begin{definition}\label{D:relative_chain}
A metric space $X$ is said to be \textit{$\alpha$-uniformly disconnected} if $X$ contains no nondegenerate relative $\alpha$-chains.
\end{definition}

\begin{remark}
We concede that our use of the adjective \textit{relative} in the above definition is a bit non-standard. However, we include this modifier in order to distinguish Definition \ref{D:relative_chain} from Definition \ref{D:chain} below.
\end{remark}

\begin{remark} \label{rmk:alphachain}
It is immediate from the definition and the triangle inequality that any nondegenerate relative $\alpha$-chain $\{x_i\}_{i\in I}$ in any metric space satisfies $\frac{1}{\alpha} \leq |I|-1$.
\end{remark}

\begin{lemma} \label{lem:alphachainlift}
Let $(X,d)$ be a metric space. Let $B,E \subset X$ with $E$ closed, and $\alpha \in (0,\frac{1}{8}]$. If there exists a nondegenerate relative $\alpha$-chain contained in $[B \cup E] \subset X/E$, then there exists a nondegenerate relative $8\alpha$-chain $\{w_j\}_{j \in J}$ contained in $B \subset X$ with $d(w_0,E)>2d(w_0,w_{\max(J)})$.
\end{lemma}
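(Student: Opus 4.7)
\textbf{Proof proposal for Lemma \ref{lem:alphachainlift}.}

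The plan is to lift the given relative $\alpha$-chain $\{[x_i]\}_{i \in I}$ in $[B\cup E]$ to a subchain of $B$ by choosing representatives carefully, selecting a good orientation and a good starting segment. Write $N := \max(I)$ and $D := \rho([x_0],[x_N]) > 0$, where $\rho$ is the quotient metric. Pick any representatives $x_i \in B \cup E$ for the chain, and note that since the chain is nondegenerate the two classes $[x_0], [x_N]$ are distinct, so at most one of them equals $[E]$. After reversing the chain if necessary, I can assume $x_0 \notin E$ and $d(x_0,E) \geq d(x_N,E)$. Put $t := d(x_0,E) > 0$. Because $\rho([x_0],[x_N]) \leq d(x_0,E) + d(x_N,E) \leq 2t$, we obtain the key inequality $t \geq D/2$.

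Next, I let $m$ be the smallest index with $d(x_0,x_m) \geq D/8$. This exists because $d(x_0,x_N) \geq \rho([x_0],[x_N]) = D > D/8$. For $0 \leq i < m$, the definition of $m$ gives $d(x_0,x_i) < D/8$, so
\[d(x_i,E) \geq t - d(x_0,x_i) > D/2 - D/8 = 3D/8 > \alpha D,\]
using $\alpha \leq 1/8$. In particular $x_i \notin E$, so $x_i \in B$. Moreover, because $d(x_i,E) > \alpha D \geq \rho([x_i],[x_{i+1}])$, the minimum defining $\rho([x_i],[x_{i+1}])$ cannot be achieved by $d(x_i,E)+d(x_{i+1},E)$, so $\rho([x_i],[x_{i+1}]) = d(x_i,x_{i+1}) \leq \alpha D$. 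Consequently $d(x_0,x_m) \leq d(x_0,x_{m-1}) + d(x_{m-1},x_m) < D/8 + \alpha D \leq D/4$, and the same estimate shows $d(x_m,E) > t - D/4 \geq D/4 > 0$, hence $x_m \in B$ as well.

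Setting $w_j := x_j$ for $0 \leq j \leq m$, the subchain $\{w_j\}_{j=0}^m$ lies entirely in $B$, is nondegenerate since $d(w_0,w_m) \geq D/8 > 0$, has step sizes
\[d(w_j,w_{j+1}) \leq \alpha D \leq 8\alpha \cdot d(w_0,w_m)\]
(since $d(w_0,w_m) \geq D/8$), and satisfies
\[d(w_0,E) = t \geq D/2 > 2 \cdot (D/4) > 2 \cdot d(w_0,w_m),\]
which is exactly the required strict inequality. The main thing to be careful about is calibrating the cutoff $D/8$: it must be small enough so that, combined with $\alpha \leq 1/8$, the upper bound $D/8 + \alpha D \leq D/4$ is strictly less than $t/2 \geq D/4$, while large enough that $\alpha D \leq 8\alpha \cdot d(w_0,w_m)$. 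Both the WLOG orientation (to secure $t \geq D/2$) and the choice of the stopping index $m$ are what make the constants line up, and this is the only real obstacle in the argument.
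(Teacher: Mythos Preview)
Your proof is correct and follows essentially the same strategy as the paper's: orient the chain so that $x_0$ is farthest from $E$ (giving $t\geq D/2$), then extract an initial segment up to a well-chosen stopping index and verify the constants. The paper uses the threshold $\tfrac12\rho([x],[E])$ in the quotient metric and splits into two cases, whereas you use the threshold $D/8$ in the original metric and include the stopping index $m$, which lets you avoid a case distinction; but the underlying idea is the same. One cosmetic slip: in the final display you write $D/2 > 2\cdot(D/4)$, which is an equality---the strict inequality you actually need (and correctly have) is $2\cdot d(w_0,w_m) < 2\cdot(D/4)$ from $d(w_0,w_m)<D/4$.
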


\begin{proof}
Assume that there exist $[x] \neq [y] \in [B \cup E]$ and $\{[z_i]\}_{i \in I} \subset [B \cup E]$ an $\alpha$-chain from $[x]$ to $[y]$. Without loss of generality, we may assume that $\rho([x],[E]) \geq \rho([y],[E])$ and $[z_i] \neq [z_j]$ for all $i \neq j \in I$, and therefore
\begin{equation} \label{eq:alphachainlift1}
    \rho([x],[E]) \geq \frac{1}{2}\rho([x],[y]).
\end{equation}
From here we consider two cases: the set $\{i \in I: \rho([x],[z_i]) \geq \frac{1}{2}\rho([x],[E])\}$ is empty or is nonempty. Suppose the first case holds. Then the triangle inequality implies $\rho([z_i],[z_j]) < \rho([x],[E])$ and $\rho([z_i],[E]) > \frac{1}{2}\rho([x],[E])$ for all $i,j \in I$, which implies
\begin{equation*}
    \max_{i,j \in I} \rho([z_i],[z_j]) < 2\min_{i \in I} \rho([z_i],[E]).
\end{equation*}
This inequality together with the definition of $\rho$ can be seen to imply that $\rho([z_i],[z_j]) = d(z_i,z_j)$ for all $i,j \in I$. Then the conclusion follows in this case with $\{w_j\}_{j\in J} = \{z_i\}_{i \in I}$.

Now assume that we are in the second case. Set $i_* := \min \{i \in I: \rho([x],[z_i]) \geq \frac{1}{2}\rho([x],[E])\}$.
As before, we have that $\rho([z_i],[z_j]) = d(z_i,z_j)$ for all $i,j < i_*$. Set $\{w_j\}_{j \in J} := \{z_i\}_{i=0}^{i_*-1}$.
It remains to show that $\{w_j\}_{i \in J}$ is a nondegenerate relative $8\alpha$-chain. First we estimate $d(w_0,w_{\max(J)})$:
\begin{align} \label{eq:alphachainlift2}
\nonumber    d(w_0,w_{\max(J)}) &= \rho([x],[z_{i_*-1}]) \\
\nonumber    &\geq \rho([x],[z_{i_*}]) - \rho([z_{i_*-1}],[z_{i_*}]) \\
    &\geq \frac{1}{2}\rho([x],[E]) - \alpha\rho([x],[y]) \\
\nonumber    &\overset{\eqref{eq:alphachainlift1}}{\geq} \frac{1}{4}\rho([x],[y]) - \frac{1}{8}\rho([x],[y]) \\
\nonumber    &= \frac{1}{8}\rho([x],[y]).
\end{align}
Note that this proves the nondegeneracy of $\{w_j\}_{i \in J}$. Then we have, for all $j < \max(J)$,
\begin{align*}
    d(w_j,w_{j+1}) = \rho([z_j],[z_{j+1}]) \leq \alpha \rho([x],[y]) \overset{\eqref{eq:alphachainlift2}}{\leq} 8\alpha d(w_0,w_{\max(J)}).
\end{align*}
\end{proof}

\begin{lemma}\label{L:retract}
Let $T$ be a 1-bounded turning tree, $u,v \in T$, and $\alpha \in (0,1)$. If there exists an $\alpha$-chain $\{x_i\}_{i\in I}$ from $u$ to $v$, then there exists an $\alpha$-chain $\{x'_i\}_{i\in I}$ from $u$ to $v$ contained in $[u,v]$. Furthermore, if $\{x_i\}_{i\in I}\subset\mathcal{B}(T)$, then $\{x'_i\}_{i\in I}\subset \mathcal{B}(T)$.
\end{lemma}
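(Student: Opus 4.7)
The plan is to define a retraction $\pi: T \to [u,v]$ and to set $x'_i := \pi(x_i)$. For $x \in [u,v]$, take $\pi(x) := x$. For $x \notin [u,v]$, set $\pi(x)$ to be the unique point of $[u,v]$ that lies on the arc $[x,y]$ for every $y \in [u,v]$; its existence and uniqueness follow from Lemma~\ref{L:tripods} by splitting into cases according to whether $u \in [v,x]$, $v \in [u,x]$, or neither (in the last case $\pi(x)$ is the tripod center, the unique element of $[u,v] \cap [u,x] \cap [v,x]$, while in the first two cases $\pi(x)$ equals $u$ or $v$ respectively). By construction $\pi(u) = u$ and $\pi(v) = v$, so the endpoints of a chain from $u$ to $v$ are preserved.

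Next I would check that $\pi$ is $1$-Lipschitz. Because $T$ is $1$-bounded turning, every arc $[a,b]$ satisfies $\diam([a,b]) = d(a,b)$. For any $x, y \in T$, the construction of $\pi$ ensures that the unique arc $[x,y]$ decomposes as $[x,\pi(x)] \cup [\pi(x),\pi(y)] \cup [\pi(y),y]$, so $[\pi(x),\pi(y)]$ is a subarc of $[x,y]$ and hence $d(\pi(x),\pi(y)) = \diam([\pi(x),\pi(y)]) \leq \diam([x,y]) = d(x,y)$. Applied to consecutive terms of the chain this yields $d(x'_i, x'_{i+1}) \leq d(x_i, x_{i+1})$ for each $i$, while $d(x'_0, x'_{\max(I)}) = d(u,v) = d(x_0, x_{\max(I)})$. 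Thus $\{x'_i\}_{i\in I}$ inherits the $\alpha$-chain condition from $\{x_i\}_{i\in I}$ and lies entirely in $[u,v]$.

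For the branch-point assertion, assume $\{x_i\}_{i\in I} \subset \mathcal{B}(T)$; then in particular $u, v \in \mathcal{B}(T)$, since $x_0 = u$ and $x_{\max(I)} = v$. If $x_i \in [u,v]$ then $x'_i = x_i \in \mathcal{B}(T)$. If $x_i \notin [u,v]$, then the three arcs $[\pi(x_i), u]$, $[\pi(x_i), v]$, and $[\pi(x_i), x_i]$ pairwise intersect only at $\pi(x_i)$ by the uniqueness of arcs in $T$, so $u$, $v$, and $x_i$ sit in three distinct components of $T \setminus \{\pi(x_i)\}$; consequently $\pi(x_i) \in \mathcal{B}(T)$. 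In the degenerate sub-cases where $\pi(x_i) \in \{u,v\}$ there is nothing further to verify, since $u$ and $v$ are branch points by hypothesis.

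The step I expect to be the most delicate is the case analysis underlying the definition of $\pi$ and the verification of the three-arc decomposition of $[x,y]$ when $u$ or $v$ already lies on one of the arcs $[u,x]$, $[v,x]$, $[u,y]$, or $[v,y]$; however, in every such configuration one of the three subarcs in the decomposition degenerates to a point, and the remaining argument goes through unchanged.
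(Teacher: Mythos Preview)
Your approach is essentially the same as the paper's --- define the nearest-point retraction onto $[u,v]$ and push the chain through it --- but there is a gap in your $1$-Lipschitz verification. The claimed decomposition $[x,y] = [x,\pi(x)] \cup [\pi(x),\pi(y)] \cup [\pi(y),y]$ fails when $[x,y] \cap [u,v] = \emptyset$: in that situation $x$ and $y$ lie in the same component of $T \setminus [u,v]$, so $\pi(x) = \pi(y)$, but this common point need not lie on $[x,y]$ at all (the union you wrote down is then a tripod with center $\pi(x)$, not the arc $[x,y]$). Your final paragraph anticipates configurations where one of the three subarcs degenerates to a point, but that is not what happens here. The fix is immediate --- once $\pi(x) = \pi(y)$ the Lipschitz inequality is trivial --- and this is exactly how the paper organizes the argument: it splits explicitly into the two cases $[x,y] \cap [u,v] = \emptyset$ (where $g(x) = g(y)$) and $[x,y] \cap [u,v] \neq \emptyset$ (where indeed $[g(x),g(y)] \subset [x,y]$, which is the content of your decomposition). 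Your treatment of the branch-point assertion is correct and slightly more explicit than the paper's.
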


\begin{proof}
Let $(x_i)_{i\in I}$ be an $\alpha$-chain of branch points in $T$ from $u$ to $v$. The idea is to project the chain onto $[u,v]$ via a 1-Lipschitz retraction $g: T \to [u,v]$ with the help of Lemma \ref{L:tripods}. We define the retraction with four cases:
\begin{align*}
    g(x) := \left\{\begin{matrix} x & x \in [u,v] \\
    u & u \in [x,v] \\ v & v \in [u,x] \\ [u,v] \cap [x,v] \cap [u,x] & \text{otherwise.} \end{matrix}\right.
\end{align*}
Note that $g$ is well-defined by Lemma~\ref{L:tripods} by interpreting $[u,v] \cap [x,v] \cap [u,x]$ as the unique point in that singleton set (and not the singleton set itself). Note also that by Lemma~\ref{L:tripods}, $g(x)$ is a branch point whenever $x,u,v$ are branch points. Once we show that $g$ is 1-Lipschitz, the chain $\{g(x_i)\}_{i\in I}$ witnesses the conclusion. Let $x,y \in T$. We check two cases: $[x,y] \cap [u,v]$ is empty or nonempty. In the first case, it holds that $g(x) = g(y)$, and so the 1-Lipschitz condition is trivially satisfied. In the second case, it holds that $[g(x),g(y)] \subset [x,y]$, and so the 1-bounded turning assumption verifies the 1-Lipschitz condition in this case.
\end{proof}

\begin{theorem}\label{T:uniform}
Let $T$ be a $(1,D)$-QC tree with branch set $B$ and leaf set $L$. Then $[B \cup \overline{L}]$ is $\frac{1}{8D^3}$-uniformly disconnected in $T/\overline{L}$.
\end{theorem}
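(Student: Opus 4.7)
The plan is a proof by contradiction. Suppose $[B \cup \overline{L}]$ contains a nondegenerate relative $\alpha$-chain for $\alpha := \tfrac{1}{8D^2}$. Since $\alpha \leq \tfrac{1}{8}$, Lemma~\ref{lem:alphachainlift} applied with $B$ and $\overline{L}$ in the roles of $B$ and $E$ produces a nondegenerate relative $\tfrac{1}{D^2}$-chain $\{w_j\}_{j \in J} \subset B$ with $d(w_0, \overline{L}) > 2R$, where $R := d(w_0, w_{\max(J)})$, and Remark~\ref{rmk:alphachain} gives $|J| \geq D^2 + 1$. Since $T$ is $1$-bounded turning, Lemma~\ref{L:retract} projects this chain onto the arc $[u,v] := [w_0, w_{\max(J)}]$, producing a relative $\tfrac{1}{D^2}$-chain of branch points $\{x_j\}_{j \in J}$ entirely contained in $V := B \cap [u,v]$. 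Since $\diam([u,v]) = R$ by $1$-bounded turning and $d(w_0, \overline{L}) > 2R$, every $p \in [u,v]$ satisfies $d(p, \overline{L}) > R$.

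I then derive contradictory lower and upper bounds on $|V|$. For the lower bound, view the projected chain as a walk from $u$ to $v$ in the auxiliary graph on $V$ whose edges connect pairs of points at distance $\leq R/D^2$. Iterating the triangle inequality along any such walk shows it must contain at least $D^2$ edges, and any repeated vertex can be shortcut without breaking the edge condition (the two neighbors of the collapsed vertex still satisfy the step-size bound), so the shortest walk from $u$ to $v$ is simple and therefore witnesses at least $D^2 + 1$ distinct vertices, giving $|V| \geq D^2 + 1$. For the upper bound, I mimic the proof of Lemma~\ref{L:sep_pts} after replacing $\L(T)$ by $\overline{L}$ -- the substitution is harmless because the leaves supplied by Lemma~\ref{L:components}(v) lie in $L \subseteq \overline{L}$ and the required distance bound still holds. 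Fixing $\epsilon \in (R, d([u,v], \overline{L}))$, for each $a \in V$ I choose a component $\Gamma_a$ of $T \setminus \{a\}$ disjoint from $[u,v]$ (which exists because $a$ is a branch point and $[u,v]$ meets at most two components of $T\setminus\{a\}$) and use the intermediate value theorem along an arc from $a$ to a leaf in $\Gamma_a$ to produce $b_a \in \Gamma_a$ with $d(a, b_a) = \epsilon$. Tree structure forces the $\Gamma_a$'s to be pairwise disjoint (a shared point would force both $a$ and $a'$ to separate it from $[u,v]$, an impossibility in a tree), and the $1$-Lipschitz retraction of $T$ onto $[a, b_a]$ sends each $b_{a'}$ with $a' \neq a$ to $a$, showing $d(b_a, b_{a'}) \geq \epsilon$. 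The $b_a$'s all lie in the ball of radius $R + \epsilon$ about $u$, and since $\epsilon > R$ strictly we have $R + \epsilon < 2\epsilon$, so two iterations of $D$-doubling cover this ball by $D^2$ open balls of radius strictly less than $\epsilon/2$, each containing at most one $b_a$. Hence $|V| \leq D^2$, contradicting the lower bound.

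The main obstacle I anticipate is the gap between the length of the projected chain (preserved by Lemma~\ref{L:retract}) and the number of \emph{distinct} branch points it witnesses on $[u,v]$, since the retraction may collapse several $w_j$'s to one point on the arc. The graph-theoretic shortcut argument bridges this gap; the one delicate verification is that shortcutting preserves the step-size bound, which follows because the successor distance at the repeated vertex is unchanged. A secondary care lies in the doubling count: landing at the exponent $2$ rather than $3$ depends on the strict inequality $d(w_0, \overline{L}) > 2R$ from Lemma~\ref{lem:alphachainlift}, which leaves the room needed to choose $\epsilon > R$.
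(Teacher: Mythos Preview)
Your proof is correct and follows essentially the same route as the paper's: assume a nondegenerate relative $\tfrac{1}{8D^2}$-chain exists, lift it via Lemma~\ref{lem:alphachainlift}, project onto $[u,v]$ via Lemma~\ref{L:retract}, and derive a contradiction by comparing a lower bound on $|B\cap[u,v]|$ (from the chain length) against a doubling upper bound obtained from the branch-point construction of Lemma~\ref{L:sep_pts}. Your graph-theoretic shortcut argument for the lower bound is in fact slightly more careful than the paper's, which passes directly from $|J|\geq D^2+1$ to $|I|\geq D^2+1$ without explicitly addressing that the retraction of Lemma~\ref{L:retract} may collapse distinct $w_j$'s to the same point of $[u,v]$; your shortcut-to-a-simple-walk step cleanly fills that gap.
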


\begin{proof}
Assume that the conclusion is false, so that there exists a nondegenerate relative $\frac{1}{8D^3}$-chain in $[B \cup \overline{L}]$. Then by Lemma \ref{lem:alphachainlift}, there exists a nondegenerate relative $\frac{1}{D^3}$-chain $\{w_j\}_{j \in J}$ contained in $B$ with $d(u,L) > 2d(u,v)$, where $u,v$ are the endpoints of the chain. By the triangle inequality, $d([u,v],L)>d(u,v)$. By Lemma \ref{L:retract}, we may assume that $\{w_j\}_{j\in J} \subset [u,v]$. By Lemma \ref{L:sep_pts}, we obtain a collection of points $\{b_i\}_{i\in I}$ such that, for all $i \neq i'\in I$, we have $d(u,v)\leq d(b_i,b_{i'})\leq 3d(u,v)$. This upper bound implies that $\{b_i\}_{i\in I}$ is contained in a ball of radius $3d(u,v)$. Thus, by the $D$-doubling property, $\{b_i\}_{i\in I}$ is contained in the union of at most $D^3$ balls of radii $\frac{3}{8}d(u,v)$. The lower bound $d(u,v)\leq d(b_i,b_{i'})$ implies that each of these balls contains at most one $b_i$, and thus $|I| \leq D^3$. But by Remark~\ref{rmk:alphachain}, we have $D^3\leq |J|-1\leq |I|-1$, a contradiction.
\end{proof}

\subsection{Proof of Theorem \ref{T:QC_quotient}} Having established Theorem \ref{T:uniform}, we now turn to the study of Lipschitz functions on metric quotients of trees. This study will culminate in the proof of Theorem \ref{T:tree_subset}, which immediately yields a proof of Theorem \ref{T:QC_quotient}.  We begin with the following definition.

\begin{definition}\label{D:sum}
Given a collection $\{(X_i,d_i,p_i)\}_{i\in I}$ of pointed metric spaces, the \textit{sum} $\coprod_{i\in I} (X_i,p_i)$ is the pointed metric space defined by the disjoint union of $\{X_i\}_{i\in I}$ with base point $e$ given by the identification of base points $\{p_i\}_{i\in I}$. Furthermore, given $(a,b)\in X_i\times X_j$, the distance $\sigma$ is defined by
\[\sigma(a,b):=\begin{cases}
                    d_i(a,b)    & \text{ if } i=j\\
                    d_i(a,p_i)+d_j(b,p_j) & \text{ if } i\not=j.
                \end{cases}\]
\end{definition}

\begin{lemma}\label{L:pre_coproduct}
Let $T$ denote a $1$-bounded turning tree and $M\subset T$ a closed subset. The space $T/M$ is $2$-bi-Lipschitz equivalent to $\coprod_{i\in I}(T_i/M_i,[M_i])$. Here $\{T_i\}_{i\in I}$ denotes the closures of the countably many connected components of $T\setminus M$, and, for each $i \in I$, we write $M_i:=T_i\cap M$. 
\end{lemma}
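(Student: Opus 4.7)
The plan is to exhibit the natural bijection $\phi: T/M \to \coprod_{i\in I}(T_i/M_i, [M_i])$ sending $[M]$ to the common basepoint $e$ and, for $x \in T\setminus M$, sending $[x]$ to the class of $x$ inside $T_i/M_i$, where $T_i$ is the (unique) closure containing $x$. The components $\{T_i \setminus M\}_{i\in I}$ partition $T\setminus M$ by Lemma~\ref{L:components}(i) and (iv), so $\phi$ is a well-defined bijection. The entire proof then reduces to comparing the quotient metric $\rho$ on $T/M$ with the coproduct metric $\sigma$ on $\coprod_i T_i/M_i$ through the map $\phi$, case by case.

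The central geometric ingredient is the identity $d_T(x, M) = d_T(x, M_i)$ for every $x \in T_i$. I would prove this by fixing $m \in M$ and following the unique arc $[x,m]$ from $x$ until its first entry point $m'$ into $M$. The half-open subarc $[x,m'] \setminus \{m'\}$ is a connected subset of $T\setminus M$ containing $x$, hence lies in the component whose closure is $T_i$; therefore $m' \in \overline{T_i \setminus M} \cap M = M_i$. Now the $1$-bounded turning hypothesis gives $d_T(x, m') \leq \diam([x,m]) = d_T(x,m)$, and taking the infimum over $m \in M$ yields $d_T(x, M_i) \leq d_T(x, M)$; the reverse inequality is immediate from $M_i \subset M$.

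With this observation the three remaining cases are short. If $x, y$ both lie in the same $T_i \setminus M$, or if one of them represents the basepoint $[M]$, then unfolding the definitions of $\rho$ and $\sigma$ and applying the key observation shows $\phi$ is an isometry on those pairs. Finally, suppose $x \in T_i\setminus M$ and $y \in T_j\setminus M$ with $i \neq j$. Then $\sigma(\phi[x], \phi[y]) = d_T(x,M_i) + d_T(y, M_j) = d_T(x,M) + d_T(y, M)$, while $\rho([x],[y]) = \min\{d_T(x,y), d_T(x,M) + d_T(y,M)\}$. The inequality $\rho \leq \sigma$ is immediate. For $\sigma \leq 2\rho$, I would use that $[x,y]$ must cross $M$ since $x,y$ lie in different components of $T\setminus M$; letting $m_x \in M_i$ and $m_y \in M_j$ be the first and last crossing points along the arc and invoking $1$-bounded turning once more, $d_T(x,M) + d_T(y,M) \leq d_T(x, m_x) + d_T(y, m_y) \leq 2\,\diam([x,y]) = 2 d_T(x,y)$, which gives $\sigma \leq 2\rho$ after comparing with both arguments of the minimum.

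The main obstacle is the key observation $d_T(x, M) = d_T(x, M_i)$. This is where the $1$-bounded turning hypothesis is essential: without the control of distance by arc-diameter, one cannot promote the \emph{topological} fact that the arc from $x$ to its nearest $M$-point first leaves $T_i$ through a point of $M_i$ into a \emph{metric} bound, and the entire case-splitting argument collapses. Everything after that observation is bookkeeping.
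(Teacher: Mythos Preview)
Your proof is correct and follows essentially the same approach as the paper: both define the natural bijection, split into the same three cases (basepoint, same component, different components), and use the $1$-bounded turning hypothesis together with the fact that the arc $[x,y]$ must cross $M$ when $x,y$ lie in different components. The only notable difference is that you explicitly isolate and prove the identity $d_T(x,M)=d_T(x,M_i)$ as a standalone ``key observation,'' whereas the paper leaves this implicit, invoking Lemma~\ref{L:components}(iii)(iv) and the $1$-bounded turning condition in passing; your version is slightly more transparent about where that hypothesis enters.
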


\begin{proof}
We first note that $\{T_i\}_{i\in I}$ is countable by Lemma \ref{L:components}(i). Furthermore, there is a natural identification of $\coprod_{i\in I}(T_i/M_i,[M_i])$ with $T/M$ as sets. Note that $p:=[M]\subset T/M$ corresponds to the based point $e$ of $\coprod_{i\in I}(T_i/M_i,[M_i])$ via this identification. 

Given $x,y\in T/M$, we first suppose $y=p$. In this case, it is easy to see that $\rho(x,p)=\sigma(x,e)$. Therefore, suppose $x,y\in T/M\setminus\{p\}$. Let $T_j$ and $T_k$ denote the components of $T\setminus M$ containing $x$ and $y$, respectively. If $T_j=T_k$, then, since $T$ is $1$-bounded turning, Lemma \ref{L:components}(iii)(iv) imply that 
\begin{align*}
\rho(x,y)&=\min\{d(x,y),d(x,M)+d(y,M)\}\\
    &=\min\{d(x,y),d(x,M_i)+d(y,M_i)\}=\sigma(x,y).
\end{align*}
If $T_j\not=T_k$, then Lemma \ref{L:components} implies that $[x,y]\cap M\not=\emptyset$. In particular, $[x,y]\cap M_j\not=\emptyset\not=[x,y]\cap M_k$. Since $T$ is $1$-bounded turning, 
\begin{equation}\label{E:distances}
d(x,y)\geq \max\{d(x,M_j),d(y,M_k)\}\geq \frac{1}{2}(d(x,M_j)+d(y,M_k)).
\end{equation}
It follows from (\ref{E:distances}) and Lemma \ref{L:components}(iii)(iv) that 
\[\frac{1}{2}\sigma(x,y)\leq \rho(x,y)\leq \sigma(x,y).\]
The desired conclusion follows.
\end{proof}

\begin{definition}\label{D:wreath}
Given $C,D\geq 1$, a \emph{$(C,D)$-QC wreath} is the quotient of a $(C,D$)-QC tree by a two-point subset of $\mathcal{L}(T)$.
\end{definition}

\begin{lemma}\label{L:coproduct}
Let $T$ be a $(1,D)$-QC tree and $M\subset\mathcal{L}(T)$ be closed in $T$. Let $S=\hull(M)$ and $B=\mathcal{B}(S)$. Then $T/(B \cup M)$ is 2-bi-Lipschitz equivalent to a sum $\coprod_{i\in I}(X_i,p_i)$, where $I$ is countable and each $X_i$ is either a $(1,D)$-QC tree or a $(1,D)$-QC wreath.
\end{lemma}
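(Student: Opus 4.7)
The plan is to apply Lemma~\ref{L:pre_coproduct} to the closed subset $B \cup M \subset T$ and then analyze each resulting summand $T_i/E_i$ individually. First I verify $B \cup M$ is closed in $T$: Lemma~\ref{L:closed_hull} shows $S = \hull(M)$ is a closed subtree with $\mathcal{L}(S) = M$, after which Lemma~\ref{L:closed_leaves} applied to $S$ yields that $B \cup M = \mathcal{B}(S) \cup \mathcal{L}(S)$ is closed in $S$, hence in $T$. Lemma~\ref{L:pre_coproduct} then provides a 2-bi-Lipschitz equivalence
\[
T/(B \cup M) \;\cong\; \coprod_{i \in I}(T_i/E_i,\,[E_i]),
\]
where the $T_i$ are the closures of the countably many components $U_i$ of $T \setminus (B \cup M)$ (Lemma~\ref{L:components}(i)) and $E_i = T_i \cap (B \cup M)$. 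By Lemma~\ref{L:components}(iv) each $T_i$ is a closed, connected subtree of $T$; it inherits $D$-doubling, and it inherits $1$-bounded turning because arcs in the subtree $T_i$ coincide with arcs in $T$. Hence each $T_i$ is itself a $(1,D)$-QC tree.

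The key structural claim is that $|E_i| \in \{1, 2\}$, and that $E_i \subset \mathcal{L}(T_i)$ whenever $|E_i| = 2$. I would split into two cases depending on whether $U_i$ meets $S$. If $U_i \cap S = \emptyset$, then $U_i$ coincides with a component $V$ of $T \setminus S$ (by maximality of $U_i$ as a connected subset of $T \setminus (B\cup M)$), and $V$ has a unique attachment point $p_V \in S$. Maximality forces $p_V \in B \cup M$ (else $V \cup \{p_V\}$ would properly enlarge $U_i$ inside $T \setminus (B \cup M)$), so $\overline{U_i} = V \cup \{p_V\}$ and $E_i = \{p_V\}$. If instead $U_i \cap S \neq \emptyset$, then since $B \cup M$ separates the distinct open edges of $S$ from one another and arcs in $T$ are unique, $U_i \cap S$ must be a single open edge $e = (a, b)$ of $S$ with $a, b \in B \cup M$. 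The remainder of $U_i$ consists of pendant subtrees of $T \setminus S$ whose attachment points lie in $e$. Taking closure adds only $a$ and $b$, giving $E_i = \{a, b\}$. To see $a, b \in \mathcal{L}(T_i)$, note that $T_i \setminus \{a\} = U_i \cup \{b\}$ is arcwise connected via the half-open edge $(a, b] \subset U_i$, to which every pendant subtree within $U_i$ attaches; the analogous argument handles $b$.

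Combining the cases: when $|E_i| = 1$, the quotient $T_i/E_i$ is isometric to the $(1,D)$-QC tree $T_i$ based at $p_V$; when $|E_i| = 2$, it is the quotient of the $(1,D)$-QC tree $T_i$ by a two-point subset of its leaves, that is, a $(1,D)$-QC wreath in the sense of Definition~\ref{D:wreath}. This produces the desired decomposition.

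The main obstacle is the second case of the structural analysis: showing $\overline{U_i} = U_i \cup \{a, b\}$ and nothing more. One must rule out additional limit points arising from sequences that hop between infinitely many pendant subtrees attached along $e$. Such sequences either converge within a single pendant subtree (already closed) or, by Lemma~\ref{L:locally_connected}, to a point of $[a, b]$, using that each component of $T \setminus S$ attaches to $S$ at a unique point. Once this topological bookkeeping is in place, the bi-Lipschitz constant of $2$ is inherited directly from Lemma~\ref{L:pre_coproduct}.
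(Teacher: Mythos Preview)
Your proposal is correct and follows the same overall route as the paper: verify $B\cup M$ is closed, apply Lemma~\ref{L:pre_coproduct}, then split according to whether $U_i$ meets $S$. The one substantive difference lies in your second case. You propose to show $E_i=\{a,b\}$ by tracking limit points of sequences hopping between pendant subtrees and invoking Lemma~\ref{L:locally_connected}; this works but is the bookkeeping you yourself flag as the main obstacle. The paper sidesteps this entirely: it first observes that any point of $E_i=T_i\cap(B\cup M)$ lies in $\partial T_i\subset\mathcal{L}(T_i)$, and then argues by contradiction---if some $x_0\in E_i\setminus\{a,b\}$ existed, the three leaves $a,b,x_0$ of $T_i$ would, via Lemma~\ref{L:tripods}, force a branch point of $S$ inside the open edge $(a,b)$, contradicting $(a,b)\cap B=\emptyset$. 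This also yields $a,b\in\mathcal{L}(T_i)$ for free, without your separate connectedness check of $T_i\setminus\{a\}$.
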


\begin{proof}
It follows from Lemma \ref{L:closed_leaves} and \ref{L:components}(i) that $S\setminus(B\cup M)$ is a collection of countably many pairwise disjoint open arcs whose endpoints are contained in $B\cup M$. We claim that each component of $T\setminus(B\cup M)$ contains at most one component of $S\setminus(B\cup M)$. Indeed, suppose a component of $T\setminus(B\cup M)$ contains components $S_1$ and $S_2$ of $S\setminus(B\cup M)$. Given any $x_1\in S_1$ and $x_2\in S_2$, Lemma \ref{L:components}(iii) implies that $[x_1,x_2]\subset T\setminus(B\cup M)$. Since it is also true that $[x_1,x_2]\subset S\setminus(B\cup M)$, we again apply Lemma \ref{L:components}(iii) to conclude  $S_1=S_2$. 

Let $T'$ denote the closure of a component of $T\setminus (B\cup M)$. We consider two cases. 

\medskip
Case 1: $T'$ contains a component of $S\setminus (B\cup M)$. Denote the closure of this component of $S\setminus(B\cup M)$ by $S'$. By the above paragraph and Lemma \ref{L:components}(iv), there exist points $s_0,s_1\in B\cup M$ such that $S'=[s_0,s_1]$. Again by Lemma \ref{L:components}(iv), the set $T'$ is a subtree of $T$. We also note that the points $\{s_0,s_1\}$ are leaves of $T'$. Indeed, $(B\cup M)\cap T'\subset \partial T'\subset \mathcal{L}(T')$.

We claim that $T'\cap(B\cup M)=\{s_0,s_1\}$. To see this, suppose there exists a point $x_0\in T'\cap (B\cup M)\subset\mathcal{L}(T')$ such that $x_0$ is not an endpoint of $S'$. Since $s_0$, $s_1$, and $x_0$ are leaves of $T'$, we have $s_0\not\in[x_0,s_1]$, $s_1\not\in[x_0,s_0]$, and $x_0\not\in[s_0,s_1]$. By Lemma \ref{L:tripods}, the open arc $(s_0,s_1)$ contains a branch point of $S$. This contradicts the fact that $(s_0,s_1)\cap B=\emptyset$. Therefore, we verify our claim that $T'\cap(B\cup M)=\{s_0,s_1\}=:P'$. We conclude that the image of $T'$ in the quotient $T/(B\cup M)$ is the $(1,D)$-QC wreath $T'/P'$.

\medskip
Case 2: $T'$ does not contain any component of $S\setminus(B\cup M)$. It follows that: 
\begin{equation}\label{E:non_intersecting}
T' \textrm{ does not intersect any component of } S\setminus(B\cup M).
\end{equation}
We claim that $T'$ intersects $B\cup M$ in exactly one point. Since $T$ is connected, this intersection is non-empty. If $T'\cap (B\cup M)$ contains points $x_0\not=x_1$, then  $[x_0,x_1]\subset S\cap T'$. Since not every point of $(x_0,x_1)$ can be a branch point of S (see \cite[Theorem 10.23]{Nadler92}), and $(x_0,x_1)\cap M=\emptyset$, it follows that $(x_0,x_1)$ intersects a component of $S\setminus(B\cup M)$. This contradicts (\ref{E:non_intersecting}). Therefore, we verify our claim that $P':=T'\cap(B\cup M)$ contains exactly one point. We conclude that the image of $T'$ in $T/(B\cup M)$ is the $(1,D)$-QC tree $T'/P'$. 

\medskip
Write $\{T_i\}_{i\in I}$ to denote the countable collection of the closures of connected components of $T\setminus (B\cup M)$, and, for each $i\in I$, define $P_i:=T_i\cap (B\cup M)$. By Lemma \ref{L:pre_coproduct}, the quotient $T/(B\cup M)$ is $2$-bi-Lipschitz equivalent to $\coprod_{i\in I}(T_i/P_i,[P_i])$. Cases 1 and 2 above confirm that each $T_i/P_i$ is either a $(1,D)$-QC wreath or $(1,D)$-QC tree.
\end{proof}

By taking pre-duals and appealing to Lemma \ref{L:1bt}, the following theorem provides a proof of Theorem \ref{T:QC_quotient}.
 
\begin{theorem}\label{T:tree_subset}
Suppose $T$ is a $(1,D)$-QC tree. If $M\subset T$ is closed, then $\Lip_0(T/M)$ is weak*-isomorphic to $L^\infty(Z)$ for some measure space $Z$. 
\end{theorem}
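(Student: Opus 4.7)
The plan is to pass to preduals and prove the equivalent statement $\mathcal{F}(T/M) \approx L^1(Z)$ for some measure space $Z$. By Lemma \ref{L:1bt} I may assume $T$ is $1$-bounded turning, and then Lemma \ref{L:pre_coproduct} gives a $2$-bi-Lipschitz equivalence $T/M \cong \coprod_{i\in I}(T_i/M_i,[M_i])$ in which each $T_i$ is a $(1,D)$-QC subtree, each $M_i := T_i \cap M$ is contained in $\mathcal{L}(T_i)$ (a short argument using Lemma \ref{L:components}(iv) to show that boundary points of a component of $T \setminus M$ are leaves of its closure), and $I$ is countable. Since the Lipschitz free space of such a sum is the $\ell^1$-direct sum of the free spaces of the summands, it is enough to prove $\mathcal{F}(T_i/M_i)$ is an $L^1$-space for each $i$.

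Fix $i$ and write $T' := T_i$, $M' := M_i$. Let $S' := \hull(M')$ and $B' := \mathcal{B}(S')$. By Lemma \ref{L:closed_hull}, $S'$ is a $(1,D)$-QC subtree of $T'$ with $\mathcal{L}(S') = M'$, and because the unique arc between any two points of the subtree $S'$ remains in $S'$, the natural map $S'/M' \hookrightarrow T'/M'$ is an isometric embedding. Applying Theorem \ref{T:uniform} to $S'$ and transporting along this embedding, the set $[B' \cup M']$ is $\frac{1}{8D^2}$-uniformly disconnected in $T'/M'$, hence bi-Lipschitz equivalent to an ultrametric space and of Nagata dimension $0$. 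Lemma \ref{L:closed_leaves} ensures that $B' \cup M'$ is closed in $S'$ and therefore in $T'$, so $[B' \cup M']$ is closed in $T'/M'$, and Lemma \ref{lem:Nagatadecomp} yields
\begin{equation*}
\mathcal{F}(T'/M') \approx \mathcal{F}([B'\cup M']) \oplus \mathcal{F}\bigl((T'/M')/[B'\cup M']\bigr).
\end{equation*}
The first summand is isomorphic to $\ell^1$ by \cite[Theorem 2]{CD16} applied to a bi-Lipschitz ultrametric realization. By Lemma \ref{L:double_quotient}, the second quotient is isometric to $T'/(B' \cup M')$, and Lemma \ref{L:coproduct} expresses this as a $2$-bi-Lipschitz sum of countably many $(1,D)$-QC trees and $(1,D)$-QC wreaths. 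Each QC tree has free space isomorphic to an $L^1$-space by \cite[Theorem C]{FG22}, and for each QC wreath $T''/\{p_0,p_1\}$, a second application of Lemma \ref{lem:Nagatadecomp} (to the finite, hence Nagata-$0$, set $\{p_0,p_1\}$) realizes its free space as a finite-codimension direct summand of $\mathcal{F}(T'') \approx L^1$, which is therefore also an $L^1$-space. Reassembling these pieces in $\ell^1$ gives $\mathcal{F}(T'/M') \approx L^1$.

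The main obstacle is bridging between $T'/M'$ and the cleanly-decomposable quotient $T'/(B'\cup M')$ provided by Lemma \ref{L:coproduct}: Theorem \ref{T:uniform} is precisely the tool that makes this work, by showing that the extra collapse of $B'$ into the equivalence class of $M'$ is uniformly disconnected, hence Nagata $0$, and therefore split off cleanly by Lemma \ref{lem:Nagatadecomp}. Summing the resulting $L^1$-isomorphisms over $i$ and dualizing yields the claimed weak*-isomorphism $\Lip_0(T/M) \approx L^\infty(Z)$.
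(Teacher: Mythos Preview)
Your approach is essentially the same as the paper's: reduce to the components $T_i/M_i$ via Lemma~\ref{L:pre_coproduct}, split off the uniformly disconnected set $[B_i\cup M_i]$ using Theorem~\ref{T:uniform} and Lemma~\ref{lem:Nagatadecomp}, identify the remaining quotient via Lemma~\ref{L:double_quotient} and decompose it with Lemma~\ref{L:coproduct} into trees and wreaths, and then invoke \cite[Theorem~C]{FG22}. The only substantive difference is that you work on the predual side with $\mathcal{F}$ and dualize at the end, whereas the paper works directly with $\Lip_0$; this is cosmetic. (One trivial remark: the hypothesis already says $T$ is a $(1,D)$-QC tree, so the appeal to Lemma~\ref{L:1bt} is unnecessary.)

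There is, however, one point you gloss over that the paper is careful about: \emph{uniformity of the isomorphism constants}. You twice form an $\ell^1$-sum of isomorphisms---once over the pieces $X_{i,j}$ of $T_i/(B_i\cup M_i)$, and once over $i\in I$---and an infinite $\ell^1$-sum of isomorphisms is bounded only when the individual isomorphism constants are uniformly bounded. The paper explicitly tracks that every constant appearing (from Lemma~\ref{lem:Nagatadecomp}, from \cite[Theorem~C]{FG22}, from \cite[Theorem~2]{CD16}, and from the codimension-one argument for wreaths) depends only on the doubling constant $D$, and this is what makes the final assembly legitimate. Your argument is correct once this is noted, but as written the sentence ``Reassembling these pieces in $\ell^1$'' hides a genuine requirement; you should state that each $\mathcal{F}(T_i/M_i)\approx L^1(Z_i)$ with constant depending only on $D$, and likewise for the inner sum.
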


\begin{proof}
By Lemma \ref{L:components}, the closures of the (countably many) components of $T\setminus M$ are $(1,D)$-QC trees $\{T_i\}_{i\in I}$. Let $M_i$ denote the subset of the leaves of $T_i$ that are contained in $M$. Note that $M_i$ is closed in $T_i$, since $M$ is closed and $M_i=T_i\cap M$. Lemma \ref{L:pre_coproduct} implies that $T/M$ is $2$-bi-Lipschitz equivalent to the sum $\coprod_{i\in I}T_i/M_i$. It then follows from \cite[Proposition 2.8(b)]{Weaver18} that 
\begin{equation}\label{E:first}
\Lip_0(T/M)\approx \bigoplus_{i\in I}\Lip_0(T_i/M_i),
\end{equation}
where the isomorphism constant is absolute (throughout this proof, ``$\approx$" denotes a weak*-weak* continuous isomorphism between dual Banach spaces).

Let $S_i=\hull(M_i)$ in $T_i$
and write $B_i$ to denote the branch points of $S_i$. Observe that, by Theorem \ref{T:uniform}, for each $i\in I$, the space $[B_i\cup M_i]\subset S_i/M_i\subset T_i/M_i$ is $\frac{1}{8D^3}$-uniformly disconnected. Then by \cite[p.\,161]{DS97}, the space $[B_i\cup M_i]$ is $8D^3$-bi-Lipschitz equivalent to an ultrametric space, and hence has Nagata dimension 0 with constant depending only on $D$ (and thus we may apply Lemma~\ref{lem:Nagatadecomp}). By Lemmas \ref{L:closed_hull}, \ref{L:closed_leaves}, \ref{L:double_quotient}, \ref{lem:Nagatadecomp}, and Remark \ref{R:quotient}, for each $i\in I$, we have 
\begin{align}\label{E:second}
\Lip_0(T_i/M_i)&\approx \Lip_0([B_i\cup M_i])\oplus\Lip_{[B_i\cup M_i]}(T_i/M_i)\\
\nonumber &=\Lip_0([B_i\cup M_i])\oplus\Lip_0(T_i/(B_i\cup M_i)).
\end{align}
Here the isomorphism constant depends only on the doubling constant $D$.

Lemma \ref{L:coproduct} tells us that $T_i/(B_i\cup M_i)$ is $2$-bi-Lipschitz equivalent to the sum $\coprod_{j\in J_i} X_{i,j}$, where each $X_{i,j}$ is either a $(1,D)$-tree or a $(1,D)$-wreath. By \cite[Proposition 2.8(b)]{Weaver18}, we conclude that 
\begin{equation}\label{E:quotient_sum}
\Lip_0(T_i/(B_i\cup M_i))\approx \bigoplus_{j\in J_i} \Lip_0(X_{i,j}).
\end{equation}
Again the isomorphism constant is absolute. 

Suppose $X_{i,j}$ is a wreath. By definition, for each $i \in I$ and $j \in J_i$, there exists a $(1,D)$-QC tree $T_{i,j}$ and a two-point subset $P_{i,j} \subset \mathcal{L}(T_{i,j})$ such that $X_{i,j} = T_{i,j}/P_{i,j}$. We may assume that $P_{i,j}$ contains the basepoint $x_{i,j}\in T_{i,j}$ at which all Lipschitz functions in $\Lip_0(T_{i,j})$ are zero. Hence, $\Lip_0(X_{i,j})$ is a weak*-closed subspace of $\Lip_0(T_{i,j})$ with codimension $1$. By \cite[Theorem~C]{FG23}, $\Lip_0(T_{i,j})\approx L^\infty(Z'_{i,j})$ for some measure space $Z'_{i,j}$ with isomorphism constant depending only on $D$. It follows that $\Lip_0(X_{i,j})\approx L^\infty(Z_{i,j})$ for some measure space $Z_{i,j}$ with isomorphism constant depending only on $D$. 

Suppose now that $X_{i,j}$ is a tree. In this case, then, again referencing \cite[Theorem C]{FG23}, we conclude that $\Lip_0(X_{i,j})\approx L^\infty(Z_{i,j})$ for some measure space $Z_{i,j}$ with isomorphism constant depending only on $D$.

In either case, it now follows from (\ref{E:quotient_sum}) that, for some measure space $Z_i$, we have
\begin{equation}\label{E:third}
\Lip_0(T_i/(B_i\cup M_i))\approx \bigoplus_{j\in J_i}L^\infty(Z_{i,j})\approx L^\infty(Z_i),
\end{equation}
where the isomorphism constants depend only on $D$.

As observed earlier, each space $[B_i\cup M_i]$ is $8D^3$-bi-Lipschitz equivalent to an ultrametric space. It then follows from \cite[Theorem 2]{CD16} that
\begin{equation}\label{E:fourth}
\Lip_0([B_i\cup M_i])\approx \ell^\infty(S_i),
\end{equation}
for some countable index set $S_i$. Here the isomorphism constant again depends only on $D$.

Since all relevant isomorphism constants depend only on the doubling constant $D$, we conclude that
\begin{align*}
\Lip_0(T/M)\overset{(\ref{E:first})}{\approx}&\bigoplus_{i\in I}\Lip_0(T_i/M_i)\\
\overset{(\ref{E:second}) \text{, Lem } \ref{L:double_quotient}}{\approx}&\bigoplus_{i\in I}\left(\Lip_0([B_i\cup M_i])\oplus\Lip_0(T_i/(B_i\cup M_i))\right)\\
\overset{(\ref{E:third})}{\approx}&\bigoplus_{i\in I}\left(\Lip_0([B_i\cup M_i])\oplus L^\infty(Z_i)\right)\\
\overset{(\ref{E:fourth})}{\approx}&\bigoplus_{i\in I}\left(\ell^\infty(S_i)\oplus L^\infty(Z_i)\right)\approx L^\infty(Z),
\end{align*}
for some measure space $Z$.
\end{proof}

\subsection{Proof of Theorem \ref{T:QC_union}}

\begin{proof}
Suppose the indexing set is $I = \{0,1, \dots k\}$. We will prove the theorem by induction on $k$. For the base case, note that the conclusion holds for $T_0$ by \cite[Theorem~C]{FG23}. Assume that the conclusion holds for the union $X := \bigcup_{i<k}T_i$. Thus the sets $X, Y = T_k$, satisfy the assumptions of Theorem~\ref{thm:F(union)}, and we conclude that 
\[\F\left(\bigcup_{i\leq k}T_i\right) \approx \F(X)\oplus\F(T_k/M)\]
for some closed $M \subset T_k$. The inductive hypothesis and Theorem~\ref{T:QC_quotient} imply that
\[\F\left(\bigcup_{i\leq k}T_i\right)\approx L^1(Z_{k-1})\oplus L^1(Z_k')\approx L^1(Z_k)\]
for some measure spaces $Z_{k-1}$, $Z_k'$ and $Z_k$. This completes the inductive step.
\end{proof}

\section{Lipschitz Light Maps on Unions and Quotients of QC Trees}\label{S:Lip_light}

\subsection{Lipschitz Light Maps and Lipschitz Dimension}

We begin with some terminology that underlies the concept of a Lipschitz light map.

\begin{definition}\label{D:chain}
Given $\delta>0$, we say that a finite sequence $\{u_i\}_{i\in I}$ is a \textit{$\delta$-chain} provided that, for each $i<\max(I)$, we have $d(x_i,x_{i+1})\leq \delta$.
\end{definition}

A subset $U$ of a metric space $X$ is \textit{$\delta$-connected} if every pair of points in $U$ is contained in a $\delta$-chain in $U$. A \textit{$\delta$-component} of $X$ is a maximal $\delta$-connected subset of $X$. 

\begin{definition}\label{D:LL}
A map $f:X\to Y$ between metric spaces is \textit{Lipschitz light} if there exist constants $L,Q>0$ such that \begin{enumerate}
    \item{$f$ is $L$-Lipschitz, and}
    \item{for every $r>0$ and $E\subset Y$ such that $\diam(E)\leq r$, the $r$-components of $f^{-1}(E)$ have diameter at most $Qr$.}
\end{enumerate}
We say that such an $f$ is $L$-Lipschitz and $Q$-light. A collection of maps $\{f_i\}_{i\in I}$ is said to be \textit{uniformly} Lipschitz light if there exist $L,Q>0$ such that, for every $i\in I$, the map $f_i$ is $L$-Lipschitz and $Q$-light.
\end{definition}

\begin{remark}\label{R:LipLight}
In \cite[Section 1.4]{David21}, David points out that the above definition of a Lipschitz light map is equivalent to the following for maps into Euclidean space: There exist $L,Q>0$ such that $f$ is $L$-Lipschitz, and, for every bounded subset $E\subset \mathbb{R}^d$, the $\diam(E)$-components of $f^{-1}(E)$ have diameter at most $Q\cdot\diam(E)$.
\end{remark}

It is easy to check that $f_2 \circ f_1$ is $L_1L_2$-Lipschitz $L_1Q_1Q_2$-light whenever $f_1$ is $L_1$-Lipschitz $Q_1$-light and $f_2$ is $L_2$-Lipschitz $Q_2$-light. We will use this fact throughout.

\begin{definition}\label{D:Lip_light}
A metric space $X$ has \textit{Lipschitz dimension} at most $n$ if there exists a Lipschitz light map $f:X\to \mathbb{R}^n$. The Lipschitz dimension of $X$ is the infimal such $n$. 
\end{definition}

We take an infimum instead of a minimum in the above definition because the Lipschitz dimension of a space may be infinite (see \cite{David21}). A few reasons that Lipschitz dimension is of theoretical significance are provided by the embedding results for spaces of Lipschitz dimension at most 1 contained in \cite{CK13} and certain non-embedding results for spaces of infinite Lipschitz dimension contained in \cite{David21}.

\subsection{Proof of Theorem \ref{T:Lip_dim_unions}}\label{s:Lip_dim}

We begin this section with two general lemmas on Lipschitz light maps that will be used in the proof of Theorem~\ref{T:Lip_dim_unions}.

\begin{lemma}\label{L:light_projection}
Suppose $X$ is a metric space, $A,B \subset X$, $\eps \in (0,1]$, and $N$ is an $\eps$-Whitney net in $B$ with respect to $A$. Let $c \in (1,\infty)$, and let $\pi: A \cup N \to A$ be any map satisfying $d(u,\pi(u)) \leq c \cdot d(u,A)$ for all $u \in A \cup N$. Then $\pi$ is a $(2c+\varepsilon)/\varepsilon$-Lipschitz $(2c+\varepsilon)/\varepsilon$-light map.
\end{lemma}

\begin{proof}
First we note that, for all $x,y \in A \cup N$, regardless of whether $x,y$ belong to $A$ or $N$, the Whitney net inequality
\begin{equation} \label{eq:Whitneyineq}
    d(x,y) \geq \eps \cdot \max\{d(x,A),d(y,A)\}
\end{equation}
still holds. Then by this and the definition of $\pi$, we have
\[d(\pi(x),\pi(y))\leq d(\pi(x),x)+d(x,y)+d(\pi(y),y)\leq \left(\frac{2c}{\varepsilon}+1\right)d(x,y).\]
Therefore, $\pi$ is $(2c+\varepsilon)/\varepsilon$-Lipschitz.

To see that $\pi$ is light, fix any $\delta>0$ and choose any $E \subset A$ such that $\diam(E) \leq \delta$. Let $\{z_i\}_{i\in I}$ denote any $\delta$-chain in $\pi^{-1}(E)\subset A \cup N$. We observe, for any $1\leq i \leq \max(I)$, that
\[\delta \geq d(z_{i-1},z_i) \overset{\eqref{eq:Whitneyineq}}{\geq} \varepsilon\cdot\max\{d(z_{i-1},A),d(z_i,A)\}.\]
Therefore, every point of $\{z_i\}_{i\in I}$ is within distance $\delta/\varepsilon$ of $A$. By the definition of $\pi$, for every $i,j\in I$, we therefore obtain
\begin{align*}
d(z_i,z_j)&\leq d(z_i,\pi(z_i))+d(\pi(z_i),\pi(z_j))+d(z_j,\pi(z_j))\\
&\leq \frac{2c\delta}{\varepsilon}+d(\pi(z_i),\pi(z_j))\\
&\leq \left(\frac{2c}{\varepsilon}+1\right)\delta,
\end{align*}
where the final inequality follows from the fact that $\{\pi(z_i),\pi(z_j)\}\subset E$ and $\diam(E)\leq \delta$. This shows that $\pi$ is $(2c+\varepsilon)/\varepsilon$-light.
\end{proof}

\begin{lemma}\label{L:two_piece}
Let $f: X \to Y$ be a map between metric spaces. If there exist $Q<\infty$ and subsets $A,B \subset X$ such that $X = A \cup B$, $f\res_A$ is $Q$-light, and $f\res_B$ is $Q$-light, then $f$ is $(2Q(Q+2)+1)$-light.
\end{lemma}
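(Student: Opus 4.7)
The plan is to fix $r>0$ and a set $E\subset Y$ with $\diam(E)\leq r$, take an arbitrary $r$-component $C$ of $f^{-1}(E)$, and show $\diam(C)\leq (2Q(Q+2)+1)r$. The central observation is that while $C$ itself need not be $r$-connected in either $(f|_A)^{-1}(E)$ or $(f|_B)^{-1}(E)$, the slices $C\cap A$ and $C\cap B$ are each connected at the coarser scale $r':=(Q+2)r$. Since $\diam(E)\leq r\leq r'$, applying the $Q$-lightness of $f|_A$ and $f|_B$ at scale $r'$ will give $\diam(C\cap A),\diam(C\cap B)\leq Qr'=Q(Q+2)r$.

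To prove the rescaled connectivity for $C\cap A$, I would take $x,y\in C\cap A$, choose an $r$-chain $x=u_0,u_1,\ldots,u_n=y$ in $C$, and extract the subsequence $(u_{i_k})_{k=0}^s$ of those indices with $u_{i_k}\in A$. For each consecutive pair $i_k<i_{k+1}$, every intermediate $u_j$ lies in $B\setminus A$, so $u_{i_k+1},\ldots,u_{i_{k+1}-1}$ forms an $r$-chain in $(f|_B)^{-1}(E)$ and therefore lies in a single $r$-component of that set, of diameter at most $Qr$ by the hypothesis on $f|_B$. The triangle inequality then yields $d(u_{i_k},u_{i_{k+1}})\leq r+Qr+r=r'$, so $(u_{i_k})_k$ is an $r'$-chain in $(f|_A)^{-1}(E)$ connecting $x$ and $y$. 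Hence $C\cap A$ is contained in a single $r'$-component of $(f|_A)^{-1}(E)$; the argument for $C\cap B$ is symmetric.

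To finish, I would split into cases. If $C\subset A$ or $C\subset B$, the bound $\diam(C)\leq Qr$ is immediate from the relevant hypothesis. Otherwise both $C\cap A$ and $C\cap B$ are nonempty; if they share a point, the triangle inequality through that point gives $\diam(C)\leq 2Q(Q+2)r$, while if they are disjoint, any $r$-chain in $C$ crossing between the two pieces must contain a consecutive pair straddling $A$ and $B$, which shows $\dist(C\cap A, C\cap B)\leq r$ and therefore $\diam(C)\leq 2Q(Q+2)r+r=(2Q(Q+2)+1)r$. The only substantive step is the $r$-chain surgery in the second paragraph: each ``$B$-excursion'' of diameter at most $Qr$ between two consecutive $A$-points collapses to a single $r'$-step in $(f|_A)^{-1}(E)$, and it is precisely this collapse that produces the factor $Q(Q+2)$ in the final constant.
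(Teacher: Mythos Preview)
Your proof is correct and follows essentially the same approach as the paper's: the core step in both is the chain surgery that collapses each $B$-excursion of diameter at most $Qr$ between consecutive $A$-points into a single $(Q+2)r$-step, after which $Q$-lightness of $f|_A$ at the coarser scale gives the $Q(Q+2)r$ bound. The only cosmetic difference is that the paper fixes a single $\delta$-chain realizing the diameter at its endpoints and bounds only the larger of the two slices $\{x_i\}_{i\in I_A}$, $\{x_i\}_{i\in I_B}$, whereas you work with the full $r$-component and bound $C\cap A$ and $C\cap B$ symmetrically; both routes arrive at $(2Q(Q+2)+1)r$ via the same straddling-pair observation.
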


\begin{proof}
Let $Q,A,B$ be as above. Without loss of generality, we may assume that $A\cap B = \emptyset$. Let $\delta > 0$, and choose $E \subset Y$ with $\diam(E) \leq \delta$. Let $\{x_i\}_{i\in I}$ be a $\delta$-chain in $f^{-1}(E)$. We need to show that $\diam(\{x_i\}_{i\in I}) \leq (2Q(Q+2)+1)\delta$. We may assume, for our purposes, that $x_{0},x_{\max(I)}\in \{x_i\}_{i\in I}$ are such that $\diam(\{x_i\}_{i\in I})=d(x_{0},x_{\max(I)})$. Partition $I$ into $I = I_A \sqcup I_B$ such that $\{x_i\}_{i\in I_A} \subset A$ and $\{x_i\}_{i\in I_B} \subset B$. Without loss of generality, we may assume that
\begin{equation}\label{eq:diamA>diamB}
    \diam(\{x_i\}_{i\in I_A}) \geq \diam(\{x_i\}_{i\in I_B}).
\end{equation}
Obviously, this implies $I_A \neq\emptyset$. If $I_B = \emptyset$, then we have $\diam(\{x_i\}_{i\in I}) = \diam(\{x_i\}_{i\in I_A}) \leq Q\delta \leq (2Q(Q+2)+1)\delta$ from the fact that $f\res_A$ is $Q$-light, and we are done. We may assume, then, that $I_B \neq \emptyset$. Since $I_A,I_B\neq \emptyset$, there must exist consecutive points $x_{i'},x_{i'+1}$ such that $x_{i'} \in A$ and $x_{i'+1} \in B$, or vice versa. If $x_{0},x_{\max(I)}\in A$ or $x_{0},x_{\max(I)}\in B$, then (\ref{eq:diamA>diamB}) implies that 
\[\diam(\{x_i\}_{i\in I})=d(x_{0},x_{\max(I)})\leq \diam(\{x_i\}_{i\in I_A}).\]
If $x_{0}\in A$ and $x_{\max(I)}\in B$ (or vice versa), then (assuming without loss of generality that $x_{0},x_{i'}\in A$ and $x_{\max(I)},x_{i'+1}\in B$) (\ref{eq:diamA>diamB}) implies that

\begin{align*}
\diam(\{x_i\}_{i\in I})&= d(x_{0},x_{\max(I)})\\
    &\leq d(x_{0},x_{i'})+d(x_{i'},x_{i'+1})+d(x_{i'+1},x_{\max(I)})\\
    &\leq 2\diam(\{x_i\}_{i\in I_A})+\delta.
\end{align*}
Hence, it suffices to prove
\begin{equation*}
    \diam(\{x_i\}_{i\in I_A}) \leq Q(Q+2)\delta.
\end{equation*}
Of course, since $f\res_A$ is $Q$-light, this will follow if we can prove that $\{x_i\}_{i\in I_A}$ is a $(Q+2)\delta$-chain. But this is easy to see: suppose $x_{j},x_{j'}$ are consecutive points in $(x_i)_{i\in I_A}$. If $j'=j+1$, then $d(x_j,x_{j'})\leq \delta$. If $j'\geq j+2$, then the points $\{x_i\}_{i=j+1}^{j'-1}$ form a $\delta$-chain in $B$. Therefore, the $Q$-lightness of $f\res_B$ implies that $\diam(\{x_i\}_{i=j+1}^{j'-1}) \leq Q\delta$. We conclude that
\begin{align*}
    d(x_j,x_{j'}) &\leq d(x_j,x_{j+1}) + d(x_{j+1},x_{j'-1}) + d(x_{j'-1},x_{j'})\\
    &\leq \delta + Q\delta + \delta\\
    &= (Q+2)\delta.
\end{align*}
\end{proof}

We now proceed to focus more specifically on QC trees and arcs, relying heavily on results from \cite{FG23} and \cite{Freeman20}. We will need the following technical lemmas.

\begin{lemma}\label{L:nice_ll_map}
Given a $C$-bounded turning Jordan arc $\gamma$, $r\geq0$, and $\{a,b\}\subset \mathbb{R}$ such that $0\leq|a-b|= r\cdot\diam(\gamma)$, there exists an $L$-Lipschitz $Q$-light map $f:\gamma\to\mathbb{R}$ such that $f$ maps the endpoints of $\gamma$ to $a$ and $b$. The constants $L$ and $Q$ depend only on $C$ and $\max\{1,r\}$.
\end{lemma}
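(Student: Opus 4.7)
The plan is to obtain $f$ as an affine rescaling of a Lipschitz light ``parametrization'' $h:\gamma\to[0,\diam(\gamma)]$ provided by \cite[Theorem~2.2]{Freeman20}. First, via Lemma~\ref{L:1bt}, I may pass to a bi-Lipschitz equivalent $1$-bounded turning metric on $\gamma$ at the cost of a factor of $C$ in the final constants, and therefore assume $\gamma$ is $1$-bounded turning.

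Suppose we have $h:\gamma\to[0,\diam(\gamma)]$ which is $L_0$-Lipschitz and $Q_0$-light with $h(p)=0$, $h(q)=\diam(\gamma)$, and $L_0,Q_0$ absolute constants. Define
\[
  f(x) := a + (b-a)\,\frac{h(x)}{\diam(\gamma)},
\]
so $f(p)=a$, $f(q)=b$, and $f$ is $rL_0$-Lipschitz. To verify lightness, take $E\subset\mathbb{R}$ with $\diam(E)\leq\delta$; then $f^{-1}(E) = h^{-1}(E')$ with $\diam(E')=\delta/r$. If $r\geq 1$, then $\diam(E')\leq\delta$, and the $Q_0$-lightness of $h$ at scale $\delta$ yields $\delta$-components of $h^{-1}(E')$ of diameter at most $Q_0\delta$. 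If $r<1$, then any $\delta$-chain is a $(\delta/r)$-chain, so the $\delta$-components of $h^{-1}(E')$ sit inside the $(\delta/r)$-components, which by $Q_0$-lightness have diameter at most $(Q_0/r)\delta$. Hence $f$ is $(Q_0/\min\{1,r\})$-light. The Lipschitz constant $L=rL_0$ is forced in any construction, since any admissible $f$ satisfies $L\geq |a-b|/d(p,q)\geq r$.

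The main obstacle is producing such an $h$. The map from \cite[Theorem~2.2]{Freeman20} is Lipschitz light, and I expect it is essentially a Lipschitz light parametrization of the arc, monotone along $\gamma$ and with image comparable to $[0,\diam(\gamma)]$, so that an affine postcomposition normalizes the endpoint values to exactly $0$ and $\diam(\gamma)$. If examining Freeman's proof does not directly yield this, an alternative is to re-engineer the iterative subdivision underlying his construction to enforce monotonicity and the endpoint separation $|h(p)-h(q)|\asymp\diam(\gamma)$, while preserving Lipschitz lightness up to constants depending only on $C$.
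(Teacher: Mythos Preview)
For $r\geq 1$ your argument and the paper's coincide: both rescale the Freeman map by $r$. The divergence is in the regime $0\leq r<1$. You rescale by $r$, obtaining a $(Q_0/r)$-light map; the paper instead post-composes $h$ with a \emph{fold} of $[0,\diam(\gamma)]$ at the midpoint of $[r\cdot\diam(\gamma),\,\diam(\gamma)]$, a $1$-Lipschitz, at most $2$-to-$1$ map sending $\diam(\gamma)\mapsto r\cdot\diam(\gamma)$ and fixing $0$. Because the fold's constants are absolute, the resulting map is Lipschitz light with constants depending only on $C$, \emph{uniformly} over $r\in[0,1)$.

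This is a genuine gap in your approach, not a cosmetic difference. At $r=0$ your map is constant, hence not $Q$-light for any finite $Q$, so your construction fails to prove the lemma there. More importantly, the sole use of this lemma (in Lemma~\ref{L:leaf_extension}) applies it simultaneously to a countable family of arcs $\gamma_j$ with ratios $r_j=|G(p_j)-G(q_j)|/\diam(\gamma_j)\in[0,L'']$ and requires $L',Q'$ uniform in $j$; your lightness constant $Q_0/r_j$ blows up as $r_j\to 0$, so the construction does not feed into the application. The fold is exactly the device that rescues the small-$r$ regime.

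Your worry about extracting $h$ from \cite[Theorem~2.2]{Freeman20} is not where the difficulty lies: the paper also just cites that result and asserts the same construction adapts from circles to arcs.
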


\begin{proof}
This follows from the proof of \cite[Theorem 2.2]{Freeman20}. While the proof as written applies to bounded turning Jordan circles, the same construction can be applied to bounded turning Jordan arcs. 

By post-composing with a translation, it suffices to assume that $a=0$. Via the construction from \cite{Freeman20} (modified as indicated above), there exists an $L'$-Lipschitz $Q'$-light map $f: \gamma \to \R$, where $L',Q'$ depend only on $C$, such that the endpoints of $\gamma$ map to $\{0,\diam(\gamma)\}$. First suppose $r \geq 1$. Then the map $r \cdot f$ is an $L$-Lipschitz $Q$-light map, where $L,Q$ depend only on $C$ and $r$, sending the endpoints of $\gamma$ to $\{0,r\cdot\diam(\gamma)\} = \{a,b\}$, which proves the lemma in this case. Now assume that $0 \leq r \leq 1$. Then we post-compose $f$ with the 1-Lipschitz 3-light map
$$x \mapsto \begin{cases} x & x \leq \dfrac{r+1}{2}\diam(\gamma) \\ (r+1)\diam(\gamma)-x & x \geq \dfrac{r+1}{2}\diam(\gamma)
\end{cases},$$
producing an $L$-Lipschitz $Q$-light map, where $L,Q$ depend only on $C$, sending the endpoints of $\gamma$ to $\{0,r\cdot\diam(\gamma)\} = \{a,b\}$.
\end{proof}

\begin{lemma}\label{L:retract_nbrhd}
Given a $\delta$-chain $\{z_k\}_{k\in K}$ in a $1$-bounded turning tree $T$, every point of the arc $[z_0,z_{\max(K)}]$ is within distance $\delta$ of the set $\{z_k\}_{k\in K}$. 
\end{lemma}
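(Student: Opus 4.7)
The plan is to reduce the lemma to the set-theoretic containment
\[
[z_0, z_{\max(K)}] \;\subset\; \bigcup_{k=0}^{\max(K)-1} [z_k, z_{k+1}],
\]
which is a general fact about trees. Once this is established, the $1$-bounded turning property yields $\diam([z_k, z_{k+1}]) \leq d(z_k, z_{k+1}) \leq \delta$, so any point $p$ in the arc $[z_0, z_{\max(K)}]$ lies in some subarc $[z_k, z_{k+1}]$ and therefore satisfies $d(p, z_k) \leq \delta$, finishing the proof.

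The containment itself I would prove by induction on $n := \max(K)$. The base case $n = 1$ is tautological. For the inductive step, assuming $[z_0, z_n] \subset \bigcup_{k=0}^{n-1}[z_k, z_{k+1}]$, I would reduce to showing
\[
[z_0, z_{n+1}] \subset [z_0, z_n] \cup [z_n, z_{n+1}].
\]
This is where Lemma~\ref{L:tripods} does the work: applied to the three points $z_0, z_n, z_{n+1}$, we either have one point on the arc joining the other two (so the containment is immediate by uniqueness of arcs in a tree), or we obtain a unique branch point $m \in [z_0, z_n] \cap [z_0, z_{n+1}] \cap [z_n, z_{n+1}]$, in which case $[z_0, z_{n+1}] = [z_0, m] \cup [m, z_{n+1}] \subset [z_0, z_n] \cup [z_n, z_{n+1}]$.

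I do not foresee any serious obstacle; the only minor point to be careful about is the case analysis coming from Lemma~\ref{L:tripods} (the collinear subcases), but each is a direct consequence of the uniqueness of arcs in $T$. The $1$-bounded turning hypothesis is used only in the very last step to convert the containment into a distance estimate.
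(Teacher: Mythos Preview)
Your proof is correct, but the route differs from the paper's. The paper does not set up an induction; instead it invokes the $1$-Lipschitz retraction $g:T\to[z_0,z_{\max(K)}]$ constructed in Lemma~\ref{L:retract}, orients the arc, and for a given $w$ lets $k_0$ be the largest index with $g(z_k)\le w$ for all $k\le k_0$. Then $w\in[g(z_{k_0}),g(z_{k_0+1})]\subset[z_{k_0},z_{k_0+1}]$, and the $1$-bounded turning hypothesis finishes as in your argument. So both proofs ultimately reach the same containment $[z_0,z_{\max(K)}]\subset\bigcup_k[z_k,z_{k+1}]$, but the paper gets there via the retraction already on hand, while you build it directly from Lemma~\ref{L:tripods} by induction on chain length. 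Your approach is slightly more self-contained (it avoids Lemma~\ref{L:retract}); the paper's approach amortises the cost by reusing machinery it needed elsewhere.
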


\begin{proof}
Let $w\in [z_0,z_{\max(K)}]$ be fixed. Write $g:T\to[z_0,z_{\max(K)}]$ to denote the $1$-Lipschitz retraction defined in the proof of Lemma \ref{L:retract}. If $z_0=z_{\max(K)}$ then $w=z_{\max(K)}\in\{z_k\}_{k\in K}$. If $z_0\not=z_{\max(K)}$, then orient $[z_0,z_{\max(K)}]$ from $z_0$ to $z_{\max(K)}$.  Choose $k_0\in K$ to be the largest index such that $g(z_k)\leq w$ in $[z_0,z_{\max(K)}]$ for all $k\leq k_0$. If $k_0=\max(K)$, then \[w=g(z_{\max(K)})=z_{\max(K)}\in\{z_k\}_{k\in K}.\] Suppose $k_0<\max(K)$. Then $w\in[g(z_{k_0}),g(z_{k_0+1})]$. As argued in the proof of Lemma \ref{L:retract}, the fact that $g(z_{k_0})\not=g(z_{k_0+1})$ implies that $[g(z_{k_0}),g(z_{k_0+1})]\subset [z_{k_0},z_{k_0+1}]$. In particular, $w\in [z_{k_0},z_{k_0+1}]$. Therefore, the assumption that $T$ is $1$-bounded turning implies that
\[d(w,z_{k_0})\leq \diam[z_{k_0},z_{k_0+1}]=d(z_{k_0},z_{k_0+1})\leq \delta\] 
In conclusion, whether or not $z_0=z_{\max(K)}$, we have $d(w,\{z_k\}_{k\in K})\leq \delta$.
\end{proof}

The following lemma is a version of Lemma \ref{L:two_piece} that is tailored to the geometry of a bounded turning tree. 

\begin{lemma}\label{L:subset_and_components}
Suppose $T$ is a $C$-bounded turning QC tree, $X\subset T$ is closed. If there exists a map $F:T\to\mathbb{R}$ that is $L_0$-Lipschitz $Q_0$-light when restricted to $X$ or the closure of any component of $T\setminus X$, then $F:T\to\mathbb{R}$ is $L$-Lipschitz $Q$-light. Here, $L$ and $Q$ are determined only by $C$, $L_0$, and $Q_0$. 
\end{lemma}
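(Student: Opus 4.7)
My plan is to verify the Lipschitz and lightness halves of the conclusion separately. In both parts I first invoke Lemma~\ref{L:1bt} to pass to an equivalent $1$-bounded turning metric $d'$ on $T$ (so that $\diam([u,v])=d'(u,v)$ for every arc), absorbing the factor $C$ into the final constants at the end.

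For the Lipschitz bound, fix $u,v\in T$ and consider the arc $[u,v]$. If $[u,v]\cap X=\emptyset$, then $[u,v]$ lies in a single component of $T\setminus X$ by Lemma~\ref{L:components}(iii), so the hypothesis for components applies directly. Otherwise, let $a,b\in[u,v]\cap X$ be the first and last intersection points in the arc's natural order; then $u$ and $a$ both lie in the closure of a single component of $T\setminus X$ (on which $F$ extends to be $L_0$-Lipschitz by continuity), similarly for $b$ and $v$, while $a,b\in X$. Since $d'(u,a),d'(a,b),d'(b,v)\le\diam([u,v])=d'(u,v)$, combining three $L_0$-Lipschitz estimates via the triangle inequality yields $|F(u)-F(v)|\le 3L_0\,d'(u,v)$; call this global Lipschitz constant $L$.

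The lightness part is the main obstacle. Given $\delta>0$, $E\subset\R$ with $\diam E\le\delta$, and a $\delta$-chain $\{u_i\}_{i=0}^n\subset F^{-1}(E)$, naive iteration of Lemma~\ref{L:two_piece}, adjoining components of $T\setminus X$ one at a time, fails because the number of components visited can be arbitrary and the lightness constant would blow up. My fix is to \emph{enrich} the chain by inserting $X$-points at transitions: whenever $u_i\in U$ and $u_{i+1}\in U'$ for distinct components $U\ne U'$ of $T\setminus X$, Lemma~\ref{L:components}(iii) forces $[u_i,u_{i+1}]\cap X\ne\emptyset$, so I choose $z_i\in X\cap[u_i,u_{i+1}]$, and $1$-bounded turning gives $d'(z_i,u_i),d'(z_i,u_{i+1})\le\delta$. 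The enriched sequence is still a $\delta$-chain, its points lie in $F^{-1}(E')$ where $E'$ is the $L\delta$-neighborhood of $E$ (so $\diam E'\le(1+2L)\delta$), and crucially its maximal same-label blocks now \emph{alternate} between $X$-blocks and single-component-blocks (distinct components can no longer be chain-adjacent).

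Each block is a $\delta$-chain in $X\cap F^{-1}(E')$ or in $U\cap F^{-1}(E')$ for some component $U$, hence has diameter at most $Q_0(1+2L)\delta$ by the hypothesis. The subsequence of all points lying in $X$-blocks is itself a $\beta\delta$-chain in $X\cap F^{-1}(E')$ for an explicit $\beta=\beta(L_0,Q_0)$, since two consecutive $X$-block points are either adjacent in the enriched chain (gap $\le\delta$) or separated by exactly one component-block of diameter $\le Q_0(1+2L)\delta$; a final application of $Q_0$-lightness of $F|_X$ bounds the diameter of this subsequence. Since every enriched chain point is within $(Q_0(1+2L)+1)\delta$ of some $X$-block point (by the per-block bound and one step across a block boundary), the triangle inequality gives $\diam\{u_i\}\le Q\delta$ for a constant $Q$ depending only on $L_0,Q_0$ in the metric $d'$, and the bound in the original metric $d$ inherits an extra factor of $C$. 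The technical care lies in verifying the alternating block structure survives the enrichment and in tracking the constants; the essential geometric input is that in a tree, distinct components of $T\setminus X$ are pairwise separated by $X$.
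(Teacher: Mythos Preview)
Your argument is correct, and the overall strategy---reduce to a chain in $X$ by controlling excursions into components of $T\setminus X$---is the same as the paper's. The implementations differ, however. For lightness, the paper first invokes Lemma~\ref{L:retract} to project the entire $\delta$-chain onto the arc $[x_0,x_{\max(J)}]$ between its extreme points, then uses Lemma~\ref{L:retract_nbrhd} to see that the whole arc lies in $F^{-1}(E')$; the components of this arc minus $X$ are then genuine sub-intervals $(a_k,b_k)$, and replacing each visited sub-interval by its two endpoints yields a coarse chain essentially in $X$. Your approach bypasses the retraction: you keep the chain as is and insert a single point of $X\cap[u_i,u_{i+1}]$ at every transition between distinct components, producing the alternating block structure directly. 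This is more combinatorial and avoids Lemmas~\ref{L:retract} and~\ref{L:retract_nbrhd}, at the cost of having to track the block decomposition carefully (in particular, the edge case where the entire chain is a single component-block should be noted explicitly, though it is trivial). The paper's route is geometrically cleaner once the retraction is in hand, since the ``blocks'' become honest sub-arcs; your route is slightly more elementary and would generalize more readily to non-tree settings where a $1$-Lipschitz retraction onto a geodesic is unavailable but components of the complement are still pairwise separated by $X$.

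As a minor aside, your Lipschitz argument with first and last intersection points in $[u,v]\cap X$ (yielding $3L_0$) is actually a bit more complete than the paper's, which only writes out the mixed case $x\in X$, $y\in U_i$ and asserts $L=2L_0$; the case $x\in U_i$, $y\in U_j$ with $i\neq j$ needs two intermediate points and the constant $3L_0$ you obtain.
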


\begin{proof}
By Lemma \ref{L:1bt}, we may assume that $T$ is $1$-bounded turning. Write $\{U_i\}_{i\in I}$ to denote the countably-many connected components of $T\setminus X$ (see Lemma \ref{L:components}). To see that $F$ is $L$-Lipschitz (for some $L$ depending only on $C$ and $L_0$), let $x,y\in T$. If both $x$ and $y$ are in $X$, or both are in a single $U_i$, then this is clear. Suppose $x\in X$ and $y\in U_i$ (for some $i\in I$). By Lemma \ref{L:components}(iv), there exists a point $z\in [x,y]\cap (X\cap\overline{U}_i)$. Then, since $T$ is $1$-bounded turning, we obtain 
\[|F(x)-F(y)|\leq |F(x)-F(z)|+|F(z)-F(y)|\leq 2L_0d(x,y),\]
Finally, suppose $x \in U_i$ and $y \in U_j$ for some $i\neq j \in I$. By Lemma \ref{L:components}(iv), there exist $z_i\in [x,y]\cap (X\cap\overline{U}_i)$ and $z_j\in [x,y]\cap (X\cap\overline{U}_j)$. Then, since $T$ is $1$-bounded turning, we obtain 
\[|F(x)-F(y)|\leq |F(x)-F(z_i)|+|F(z_i)-F(z_j)|+|F(z_j)-F(y)|\leq 3L_0d(x,y).\]

Thus $F:T\to\mathbb{R}$ is $L$-Lipschitz with $L=3L_0$.

To see that $F$ is $Q$-light (for some $Q$ depending only on $C$, $L_0$, and $Q_0$), fix $\delta>0$ and let $E\subset \mathbb{R}$ be such that $\diam(E)\leq \delta$. Let $\{x_j\}_{j\in J}$ denote a $\delta$-chain in $F^{-1}(E)$. For our purposes, we may assume that 
\[\diam(\{x_j\}_{j\in J})=d(x_0,x_{\max(J)}).\] 
Furthermore, by Lemma \ref{L:retract}, we may also assume $\{x_j\}_{j\in J}\subset[x_0,x_{\max(J)}]$. Indeed, since $T$ is $1$-bounded turning, we have 
\[\diam([x_0,x_{\max(J)}])=d(x_0,x_{\max(J)})=\diam(\{x_j\}_{j\in J}).\] 
In order to simplify our argument below, we may also assume (without affecting its diameter) that $\{x_j\}_{j\in J}$ proceeds monotonically from $x_0$ to $x_{\max(J)}$ along $[x_0,x_{\max(J)}]$. Since $F$ is $L$-Lipschitz, Lemma \ref{L:retract_nbrhd} implies that $[x_0,x_{\max(J)}]\subset F^{-1}(E')$, where $E'\subset \mathbb{R}$ is the set of points within distance $L\delta$ of $E$. 

If $[x_0,x_{\max(J)}]$ is contained in $X$, or in the closure of a single $U_i$, then we are done (see \cite[Lemma 5.9]{FG23}). Assume this is not the case, and let $\{(a_k,b_k)\}_{k\in K}$ denote the finitely-many components of $(x_0,x_{\max(J)})\setminus X$ containing at least one point of $\{x_j\}_{j\in J}$. Note that, by assumption, 
\begin{equation}\label{E:nontrivial}
(x_0,x_{\max(J)})\not\in\{(a_k,b_k)\}_{k\in K}\not=\emptyset.
\end{equation}
For each $k\in K$, write $U_{i_k}$ to denote the component of $T\setminus X$ containing $(a_k,b_k)$. Since $F$ is $Q_0$-light on each closure $\overline{U}_{i_k}$, it follows that 
\[d(a_k,b_k)=\diam([a_k,b_k])\leq 3LQ_0\delta.\]
Here we use the facts that $[a_k,b_k]\subset F^{-1}(E')$ and $\diam(E')\leq 3L\delta$. 

Therefore, by replacing each sub-chain $\{x_j\}_{j\in J}\cap (a_k,b_k)$ with the points $\{a_k,b_k\}$, we obtain a $3LQ_0\delta$-chain $\{z_j\}_{j\in J'}\subset[x_0,x_{\max(J)}]$ from $x_0$ to $x_{\max(J)}$. We note that, by construction, all points of $\{z_j\}_{j\in J'}$ are contained in $X$, except possibly $z_0=x_0$ and/or $z_{\max(J')}=x_{\max(J)}$. Since $F$ is $Q_0$-light on $X$, $\{z_j\}_{j\in J'}\subset F^{-1}(E')$, and $\diam(E')\leq 3LQ_0\delta$, we conclude that
\begin{align*}
\diam(\{x_j\}_{j\in J})=d(x_0,x_{\max(J)})&=\diam(\{z_j\}_{j\in J'})\\
    &\leq \diam(\{z_1,\dots,z_{\max(J')-1}\})+6LQ_0\delta\\
    &\leq 3LQ_0^2\delta+6LQ_0\delta\\
    &\leq9LQ_0^2\delta.
\end{align*}
Here we are also using the fact that $|J'|\geq2$, which follows from (\ref{E:nontrivial}). Set $Q=9LQ_0^2$, and it follows that $F:T\to\mathbb{R}$ is $Q$-light. 
\end{proof}

\begin{lemma}\label{L:leaf_extension}
Suppose $T$ is a $(C,D)$-QC tree, and $\L(T)$ is closed in $T$. If there exists an $L_0$-Lipschitz $Q_0$-light map $f:\L(T)\to\mathbb{R}$, then $f$ extends to an $L$-Lipschitz $Q$-light map $F:T\to\mathbb{R}$, where $L$ and $Q$ depend only on $C$, $D$, $L_0$, and $Q_0$. 
\end{lemma}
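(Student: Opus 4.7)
The plan is to assemble the extension via Lemma~\ref{L:subset_and_components}, using the closed set $X := \mathcal{B}(T) \cup \mathcal{L}(T)$ (closed by Lemma~\ref{L:closed_leaves}) as the piece on which $F$ must be directly controlled, and the open arc components of $T \setminus X$ as the pieces to fill in via Lemma~\ref{L:nice_ll_map}. First I would invoke Lemma~\ref{L:1bt} to reduce to the case where $T$ is $1$-bounded turning, absorbing a factor of $C$ into the final constants. Next I would observe that $\hull(\mathcal{L}(T)) = T$: for any non-leaf $p \in T$, Lemma~\ref{L:components}(v) gives leaves $\ell_1, \ell_2$ in distinct components of $T \setminus \{p\}$, and then $p \in [\ell_1, \ell_2]$. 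This lets the Case~1 analysis in the proof of Lemma~\ref{L:coproduct} (with $S = T$ and $M = \mathcal{L}(T)$) show that each connected component of $T \setminus X$ has closure a topological arc $[s_0, s_1]$ with $s_0, s_1 \in X$ and $[s_0, s_1] \cap X = \{s_0, s_1\}$.

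The crux of the argument, and what I expect to be the main obstacle, is constructing $F$ on $X$ so that it is simultaneously Lipschitz and Lipschitz-light. Here I would exploit Theorem~\ref{T:uniform}, which asserts that $[X]$ is $(8D^2)^{-1}$-uniformly disconnected in $T/\mathcal{L}(T)$, and hence by \cite[p.\,161]{DS97} bi-Lipschitz equivalent to an ultrametric space with constant depending only on $D$. Using this ultrametric, I would organize $\mathcal{B}(T)$ into nested clusters indexed by distance-to-$\mathcal{L}(T)$ scale, and for each branch point $b$ choose a representative leaf $\ell_b \in \mathcal{L}(T)$ coherently across each cluster---so that two branch points in a common small cluster share the same representative. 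Setting $F(b) := f(\ell_b)$ and $F|_{\mathcal{L}(T)} := f$, the ultrametric structure forces that when two branch points are assigned distinct representatives, one has $d(\ell_{b_1}, \ell_{b_2}) \lesssim d(b_1, b_2)$, yielding $L_1$-Lipschitzness of $F|_X$ with $L_1 = L_1(D, L_0)$. The corresponding lightness estimate, with constant $Q_1 = Q_1(D, L_0, Q_0)$, follows by separating chains that stay in $\mathcal{L}(T)$ (controlled by $Q_0$) from chains that repeatedly visit $\mathcal{B}(T)$: the latter, by uniform disconnectedness, must lie inside a single ultrametric cluster and hence map under $F$ to a single $f$-value.

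With $F|_X$ in hand, the arc-filling step is routine. For each open arc component $V \subset T \setminus X$ with $\overline{V} = [s_0, s_1]$, the ratio $r_V := |F(s_0) - F(s_1)|/\diam([s_0, s_1])$ is bounded by $L_1$ (by Lipschitzness of $F|_X$), so Lemma~\ref{L:nice_ll_map} produces an extension of $F$ to $\overline{V}$ that is $L_2$-Lipschitz and $Q_2$-light with $L_2, Q_2$ depending only on $C$ and $L_1$ (the degenerate case $r_V = 0$ is covered by the folding construction from \cite{Freeman20}). Applying Lemma~\ref{L:subset_and_components} with the closed set $X = \mathcal{B}(T) \cup \mathcal{L}(T)$ then assembles these pieces into the desired globally $L$-Lipschitz, $Q$-light map $F: T \to \mathbb{R}$, with $L, Q$ depending only on $C, D, L_0, Q_0$.
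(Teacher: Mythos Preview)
Your overall architecture matches the paper exactly: reduce to the $1$-bounded turning case, set $X=\mathcal{B}(T)\cup\mathcal{L}(T)$ (closed by Lemma~\ref{L:closed_leaves}), fill in the open arcs of $T\setminus X$ via Lemma~\ref{L:nice_ll_map}, and assemble via Lemma~\ref{L:subset_and_components}. The divergence is entirely in how the extension to $\mathcal{B}(T)$ is produced. The paper does not construct it by hand; it invokes \cite[Theorem~1.6]{LN05} to obtain an $L''$-Lipschitz extension $G$ of $f$ to $\mathcal{L}(T)\cup\mathcal{B}(T)$ (using only that $T$ is doubling), and then proves $Q''$-lightness of $G$ directly by a two-case argument: chains far from $\mathcal{L}(T)$ are bounded using Lemma~\ref{L:sep_pts} and doubling, while chains with a point near $\mathcal{L}(T)$ are projected to $\mathcal{L}(T)$ and controlled by the $Q_0$-lightness of $f$.

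Your alternative---defining $F(b):=f(\ell_b)$ for a representative leaf $\ell_b$ chosen coherently across ultrametric clusters coming from Theorem~\ref{T:uniform}---has a genuine gap in the Lipschitz estimate. The ultrametric of Theorem~\ref{T:uniform} is bi-Lipschitz to the \emph{quotient} metric $\rho$ on $T/\overline{\mathcal{L}(T)}$, not to $d$. Two branch points $b,b^*$ can sit in the same ultrametric cluster (because $\rho([b],[b^*])=d(b,\mathcal{L}(T))+d(b^*,\mathcal{L}(T))$ is small) while $d(b,b^*)$ is arbitrarily large; think of a long arc decorated with many short twigs. Your coherence condition then forces $\ell_b=\ell_{b^*}$, and since $\ell_{b^*}$ is chosen near $b^*$, the leaf $\ell_b$ may lie at $d$-distance $\gg d(b,\mathcal{L}(T))$ from $b$. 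Taking any leaf $\ell$ with $d(b,\ell)\approx d(b,\mathcal{L}(T))$ then gives
\[
|F(b)-f(\ell)|=|f(\ell_b)-f(\ell)|\le L_0\,d(\ell_b,\ell),
\]
and the right-hand side is not controlled by $d(b,\ell)$. So the map $F|_X$ need not be Lipschitz, and the assertion ``$d(\ell_{b_1},\ell_{b_2})\lesssim d(b_1,b_2)$ when representatives differ'' does not rescue this, because the failure occurs between a branch point and a leaf, not between two branch points with different representatives. One can repair the idea by clustering in the $d$-metric (e.g.\ dyadic shells $\{b:d(b,\mathcal{L}(T))\in(2^{k-1},2^k]\}$ refined into $2^k$-components, whose $d$-diameters are then bounded via the same doubling argument the paper uses in its Case~1), but that essentially rederives the content of the paper's lightness proof and bypasses Theorem~\ref{T:uniform}; the cleanest fix is exactly what the paper does---cite \cite{LN05} for Lipschitzness and argue lightness separately.
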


\begin{proof}
By Lemma \ref{L:1bt}, we may assume that $T$ is $1$-bounded turning. By Lemma~\ref{lem:Nagatadecomp}, there exists an $L''$-Lipschitz extension of $f$ to $\mathcal{L}(T)\cup \mathcal{B}(T)$, where $L''$ depends only on $D$ and $L_0$. Denote this extension by $G:(\L(T)\cup\mathcal{B}(T))\to\mathbb{R}$. We claim that $G$ is $Q''$-light, for $Q''$ determined by $C$, $D$, $L_0$, and $Q_0$. 

Before verifying our claim, we first explain how it implies the conclusion of the lemma. To this end, we first note that $T\setminus(\L(T)\cup\mathcal{B}(T))$ consists of countably many pairwise disjoint arcs. The closures of these arcs, denoted by $\{\gamma_j\}_{j\in J}$, are $(1,D)$-QC arcs whose endpoints $\{p_j,q_j\}_{j\in J}$ are contained in $\L(T)\cup \mathcal{B}(T)$. For each $j\in J$, write $a_j:=G(p_j)$ and $b_j:=G(q_j)$. Since $G$ is $L''$-Lipschitz on $(\L(T)\cup\mathcal{B}(T))$, Lemma \ref{L:nice_ll_map} implies the existence of an $L'$-Lipschitz $Q'$-light map $F_j:\gamma_j\to\mathbb{R}$ such that $(F_j)\res_{\{p_j,q_j\}}=G\res_{\{p_j,q_j\}}$. Here $L'$ and $Q'$ depend only on $C$, $D$, and $L_0$ (and not on $j$). Thus we define a map $F:T\to\mathbb{R}$ such that, for each $j\in J$, we have $F\res_{\gamma_j}=F_j$. Furthermore, $F\res_{(\L(T)\cap\mathcal{B}(T))}=G$. Via Lemma \ref{L:subset_and_components}, our claim implies that $F$ is $L$-Lipschitz $Q$-light, with $L$ and $Q$ determined only by $C$, $D$, $L_0$, and $Q_0$. 

\medskip
To verify our claim (and thus prove the lemma), let $\delta>0$ be fixed, and choose any $E\subset \mathbb{R}$ such that $\diam(E)\leq \delta$. Let $\{z_k\}_{k\in K}$ denote any $\delta$-chain in $G^{-1}(E)\subset \mathcal{L}(T)\cup \mathcal{B}(T)$. For the purpose of bounding the diameter of $\{z_k\}_{k\in K}$, we may assume that $\diam(\{z_k\}_{k\in K})=d(z_0,z_{\max(K)})$. Indeed, if $\diam(\{z_k\}_{k\in K})=d(z_n,z_m)$ for some $n<m\in K$, then we simply replace $\{z_0,\dots,z_{\max(K)}\}$ (if necessary) with $\{z_n,\dots,z_m\}$. 

\medskip
Case 1: No point of $\{z_k\}_{k\in K}$ is within distance $D^4\delta$ of $\mathcal{L}(T)$. We may (and do) assume that $D\geq 2$. Thus $\{z_k\}_{k\in K}$ consists entirely of branch points in $T$. By Lemma \ref{L:retract}, we obtain a $\delta$-chain $\{z_k'\}_{k\in K}\subset[z_0,z_{\max(K)}]$, again consisting solely of branch points in $T$. Here $z_0'=z_0$ and $z_{\max(K)}'=z_{\max(K)}$. By Lemma \ref{L:retract_nbrhd}, every point of $[z_0,z_{\max(K)}]$ is within distance $\delta$ of some point in $\{z_k\}_{k\in K}$. Therefore, 
\begin{equation}\label{E:distance_from_leaves}
d([z_0,z_{\max(K)}],\L(T))>(D^4-1)\delta\geq \delta.
\end{equation}

By way of contradiction, assume that $d(z_0,z_{\max(K)})\geq D^4\delta$. Let $u$ denote the first point of $[z_0,z_{\max(K)}]$ (moving from $z_0$ to $z_{\max(K)}$) such that $d(z_0,u)=D^4\delta$. Since $\{z_k'\}_{k\in K}$ is a $\delta$-chain, the arc $[z_0,u]$ contains at least $D^4$ distinct points from $\{z_k'\}_{k\in K}$. In particular, $[z_0,u]$ contains at least $D^4$ branch points of $T$. By (\ref{E:distance_from_leaves}), Lemma \ref{L:sep_pts}, and the fact that $T$ is $1$-bounded turning, we obtain points $\{b_i\}_{i\in I}\subset T$ such that, for every $i,i'\in I$, we have 
\[(D^4-1)\delta\leq d(b_i,b_{i'})\leq 2(D^4-1)\delta+d(z_0,u)\leq 4(D^4-1)\delta.\]
Furthermore, $|I|\geq D^4$. By the fact that $T$ is $D$-doubling, we also have $|I|\leq D^3$, a contradiction. We conclude that, if no point of $\{z_k\}_{k\in K}$ is within distance $D^4\delta$ to $\mathcal{L}(T)$, then $\diam(\{z_k\}_{k\in K})<D^4\delta$. 

\medskip
Case 2: At least one point of $\{z_k\}_{k\in K}$ is within distance $D^4\delta$ of $\L(T)$. If all points are within distance $D^4\delta$ of $\L(T)$, then we may skip to the next paragraph. If not, let $K'\subset K$ be a maximal (with respect to inclusion) sequence of consecutive indices such that no point of $\{z_k\}_{k\in K'}$ is within distance $D^4\delta$ of $\L(T)$. By Case 1, 
\begin{equation}\label{E:far_away_part}
\diam(\{z_k\}_{k\in K'})\leq D^4\delta.
\end{equation}
Since $K'\not=K$, some point of the $\delta$-chain $\{z_k\}_{k\in K'}$ is within distance $(D^4+1)\delta$ of $\L(T)$. Therefore, (\ref{E:far_away_part}) implies that every point of $\{z_k\}_{k\in K'}$ is within distance $(1+2D^4)\delta$ of $\L(T)$. Since this holds for any such $K'\subset K$, we conclude that all points of $\{z_k\}_{k\in K}$ are within distance $(1+2D^4)\delta$ of $\L(T)$. 

We write $D':=(1+2D^4)$. Via nearest point projections we obtain a sequence $\{w_k\}_{k\in K}\subset \L(T)$. It is clear that $\{w_k\}_{k\in K}$ is a $(1+2D')\delta$-chain. Furthermore, since $G$ is $L''$-Lipschitz, for each $k\in K$ we have $|G(z_k)-G(w_k)|\leq L''d(z_k,w_k)\leq L''D'\delta$. Thus $\{w_k\}_{k\in K}\subset G^{-1}(E')$, where $E'$ is defined to be the set of points at most distance $L''D'\delta$ from $E$. Note that 
\begin{equation}\label{E:bigger_set}
\diam(E')\leq \diam(E)+2L''D'\delta\leq(1+2L''D')\delta.
\end{equation}
Since $G\res_{\L(T)}=f$, $f$ is $Q_0$-light, and $(1+2D')\delta\leq (1+2L''D')\delta$, we conclude that 
\[\diam(\{w_k\}_{k\in K})\leq Q_0(1+2L''D')\delta.\]
It follows that, 
\[\diam(\{z_k\}_{k\in K})\leq Q_0(1+2D'+2L''D')\delta\]
in the case that some point of $\{z_k\}_{k\in K}$ is within distance $D^4\delta$ of $\L(T)$.

\medskip
Setting $Q:=Q_0(1+2D'+2L''D')$, the conjunction of Cases 1 and 2 confirm that any $\delta$-chain $\{z_k\}_{k\in K}\subset G^{-1}(E)$ has diameter at most $Q\delta$. Moreover, $Q$ is determined solely by $L_0$, $Q_0$, $C$, and $D$.
\end{proof}

\begin{lemma}\label{L:bdry_subset_extension}
Suppose $T$ is a $(C,D)$-QC tree, and $M\subset \mathcal{L}(T)$ is closed in $T$. Then every $L_0$-Lipschitz $Q_0$-light map $f:M\to\mathbb{R}$ extends to an $L$-Lipschitz $Q$-light map $F:T\to\mathbb{R}$. Here $L$ and $Q$ depend only on $L_0$, $Q_0$, $C$, and $D$.
\end{lemma}

\begin{proof}
Set $S:=\hull(M)\subset T$. By Lemmas~\ref{L:closed_hull} and \ref{L:leaf_extension}, there exists an $L''$-Lipschitz $Q''$-light map $G:S\to\mathbb{R}$ such that $G\res_M=f$, where $L''$ and $Q''$ depend only on $C$, $D$, $L_0$ and $Q_0$. Let $\{U_i\}_{i\in I}$ denote the connected components of $T\setminus S$. Write $\{T_i\}_{i\in I}$ to denote their closures. By Lemma \ref{L:components}, each $T_i$ is a sub-tree of $T$. By \cite[Theorem 10.10]{Nadler92}, each intersection $V_i:=T_i\cap S$ is connected, and thus also a subtree of $T$. It follows from the connectedness of $U_i$ that $V_i\subset\mathcal{L}(T_i)$, and so the subtree $V_i$ must be degenerate. That is, $V_i$ consists of a single point. In this way we define, for each $i\in I$, the point $p_i\in T_i\cap S$. By \cite[Theorem 5.10]{FG23}, for each $i\in I$, there exists an $L'$-Lipschitz $Q'$-light map $F_i:T_i\to\mathbb{R}$, where $L'$ and $Q'$ depend only on $C$ and $D$. Up to translating images in $\mathbb{R}$, we may assume that $F_i(p_i)=G(p_i)$. Thus we obtain a continuous map $F:T\to\mathbb{R}$ such that $F\res_S=G$ and, for every $i\in I$, $F\res_{T_i}=F_i$. By Lemma \ref{L:subset_and_components}, the map $F:T\to\mathbb{R}$ is $L$-Lipschitz $Q$-light, for $L$ and $Q$ determined solely by $C$, $D$, $L_0$, and $Q_0$. 
\end{proof}

\begin{proof}[Proof of Theorem \ref{T:Lip_dim_unions}]
Let $T_1,\dots T_k \subset Z$ be a finite collection of QC trees inside an ambient metric space $Z$. We will prove that $\bigcup_{i=1}^k T_i$ has Lipschitz dimension 1 by induction on $k$. By \cite[Theorem D]{FG23}, the conclusion holds when $k=1$. Now let $k \geq 2$, set $X := \bigcup_{i=1}^{k-1} T_i$ and $T := T_k$, and assume that $X$ has Lipschitz dimension 1. Let $f: X \to \R$ be a Lipschitz light map.

Let $N$ be a $1/2$-Whitney net in $T$ with respect to $X$, and let $\pi: X \cup N \to X$ be any map with $d(u,\pi(u)) \leq 2d(u,X)$ for all $u \in X \cup N$. Note that this implies $\pi\res_X$ is the identity map on $X$. By Lemma \ref{L:light_projection}, the map $\pi$ is Lipschitz light. Then $f \circ \pi: X \cup N \to \R$ is Lipschitz light, being the composition of Lipschitz light maps.

Set $Y := T \cap (X \cup N) = (T \cap X) \cup N$. It is easy to see that $Y$ is a closed subset of $T$. Since $Y$ is closed in $T$, the closures of the countably-many components of $T \setminus Y$, say $\{S_i\}_{i\in I}$, are subtrees of $T$ (see Lemma \ref{L:components}). For each $i\in I$, set $M_{i} := S_{i} \cap Y \subset \mathcal{L}(S_i)$. By Lemma \ref{L:bdry_subset_extension}, for each $i \in I$, the map $(f \circ \pi)\res_{M_i}$ extends to an $L$-Lipschitz $Q$-light map $f_i: S_i \to \R$, where $L,Q<\infty$ are independent of $i$. Gluing these $f_i$ and $f \circ \pi$ together, we obtain a well-defined map $F: T \cup X \to \R$ satisfying $F\res_{S_i} = f_i$ and $F\res_{X \cup N} = f \circ \pi$. By Lemma \ref{L:subset_and_components}, the restriction $F\res_T$ is Lipschitz light, and by Lemma \ref{L:two_piece}, the entire map $F$ is $Q'$-light for some $Q' < \infty$.

It remains to verify that $F$ is Lipschitz. To this end, let $x,y \in T \cup X$. Without loss of generality, there are three cases to consider: $x,y \in T$, $x \in T \setminus X$ and $y \in X$, or $x,y \in X$. The Lipschitz inequalities for the first and third cases hold since $F\res_T$ and $F\res_X$ are Lipschitz. Thus, we proceed to consider the second case. By Lemma~\ref{L:Whitney}, there exists $u \in N$ such that
\begin{equation} \label{eq:ineq1}
    d(x,u) \leq d(x,X) \leq d(x,y)
\end{equation}
Let $L'$ be the maximum of the Lipschitz constants of $F\res_T$ and $F\res_{X \cup N}$. Then since $x,u \in T$ and $y,u \in X \cup N$, we have
\begin{align*}
    |F(x)-F(y)| &\leq |F(x)-F(u)| + |F(u)-F(y)| \\
    &\leq L'd(x,u) + L'd(u,y) \\
    &\leq L'd(x,u) + L'd(u,x) + L'd(x,y) \\
    &\overset{\eqref{eq:ineq1}}{\leq} 3L'd(x,y),
\end{align*}
completing the proof.
\end{proof}

\subsection{Proof of Theorem \ref{T:quotient_dim}}\label{s:quotient_dim}

We bootstrap our way towards a proof of Theorem \ref{T:quotient_dim}. First, we prove that the Lipschitz dimension of a QC wreath is equal to one. Then, via a series of lemmas, we reduce the general case to a consideration of QC wreaths and QC trees. 

\begin{lemma}\label{L:wreath_dim}
If $X$ is a QC wreath obtained from a $(C,D$)-QC tree $T$, then there exists an $L$-Lipschitz $Q$-light map $g:X\to\mathbb{R}$ such that $L$ and $Q$ depend only on $C$ and $D$.  
\end{lemma}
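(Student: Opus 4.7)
The plan is to construct a Lipschitz light map $F:T\to\mathbb{R}$ on the underlying QC tree that takes the same value at the two identified leaves, and then show the induced map on the quotient $X=T/\{p,q\}$ inherits both properties with quantitative control.

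First I would invoke Lemma \ref{L:1bt} to reduce to $C=1$. Let $\{p,q\}\subset\mathcal{L}(T)$ be the identified pair and $\gamma=[p,q]\subset T$ the unique arc joining them. In $X$, the image $\tilde\gamma$ of $\gamma$ under the quotient map $\pi:T\to X$ is a bounded turning Jordan circle that inherits the doubling property with constants depending only on $C$ and $D$. Applying \cite[Theorem~2.2]{Freeman20} (its original circle version) produces an $L_0$-Lipschitz $Q_0$-light map $\tilde g:\tilde\gamma\to\mathbb{R}$ with $L_0,Q_0$ depending only on $C,D$. Pulling back through $\pi$ gives an $L_0$-Lipschitz $Q_0$-light map $g_\gamma:\gamma\to\mathbb{R}$ satisfying $g_\gamma(p)=g_\gamma(q)$.

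Next I would extend $g_\gamma$ to all of $T$. By Lemma \ref{L:components}, the closures $\{T_i\}_{i\in I}$ of the countably many components of $T\setminus\gamma$ are $(1,D)$-QC subtrees of $T$, and by uniqueness of arcs in $T$ each $T_i$ meets $\gamma$ in exactly one point $p_i$. By \cite[Theorem~5.10]{FG22}, each $T_i$ admits an $L_1$-Lipschitz $Q_1$-light map $F_i:T_i\to\mathbb{R}$ with $L_1,Q_1$ depending only on $D$; we translate each image so that $F_i(p_i)=g_\gamma(p_i)$. Declaring $F|_\gamma=g_\gamma$ and $F|_{T_i}=F_i$ yields a well-defined, continuous map $F:T\to\mathbb{R}$. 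The $1$-bounded turning property together with the single-point intersection $T_i\cap\gamma=\{p_i\}$ makes $F$ Lipschitz with controlled constant, and Lemma \ref{L:subset_and_components} then upgrades $F$ to a Lipschitz light map on $T$ with constants depending only on $C$ and $D$.

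Since $F(p)=F(q)$, $F$ descends to a map $g:X\to\mathbb{R}$. The inequality $\rho\leq d$ on $X$ immediately transfers the Lipschitz bound. For the lightness of $g$, fix $E\subset\mathbb{R}$ with $\diam(E)\leq\delta$ and a $\delta$-chain $\{[x_i]\}_{i\in I}$ in $g^{-1}(E)$. Each consecutive pair either satisfies $d(x_i,x_{i+1})\leq\delta$ directly, or takes the shortcut $d(x_i,\{p,q\})+d(x_{i+1},\{p,q\})\leq\delta$. I decompose the quotient chain into maximal runs of direct steps; each run lifts to a genuine $\delta$-chain in $T$, and at each shortcut we append the nearest point of $\{p,q\}$ to the current sub-chain and begin a new sub-chain from the corresponding point of $\{p,q\}$. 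Each sub-chain lies in $F^{-1}(E')$, where $E'$ is the $L_F\delta$-enlargement of $E$ (since the inserted points have $F$-value $F(p)=F(q)$ within $L_F\delta$ of $E$). Lipschitz lightness of $F$ on $T$ bounds each sub-chain's $T$-diameter, hence its $\rho$-diameter, by a constant multiple of $\delta$. Since consecutive sub-chains share the identified pivot $[p]=[q]$ in $X$, any two points of $\{[x_i]\}_{i\in I}$ are joined by going through this pivot, giving a $\rho$-diameter bound by twice the maximum sub-chain diameter. This yields Lipschitz lightness of $g$ with constants depending only on $C$ and $D$.

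The main obstacle will be the final quotient-to-tree chain lifting: carefully accounting for how shortcut steps fragment a $\delta$-chain in $X$ into $\delta$-sub-chains in $T$, controlling the slightly enlarged target set $E'$ that the lifted points occupy, and assembling the resulting diameter bounds through the common pivot $[p]=[q]$ so that the final constants remain dependent only on $C$ and $D$.
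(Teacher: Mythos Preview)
Your approach is correct but takes a more elaborate route than the paper. The paper simply invokes \cite[Theorem~D]{FG22} once to get an $L$-Lipschitz $Q'$-light map $f:T\to\mathbb{R}$ on the whole tree, and then uses a single \emph{fold} of the image at the midpoint of $[f(p),f(q)]$ to force $f(p)=f(q)$ while keeping the Lipschitz and light constants quantitatively controlled; it then passes to the quotient and checks lightness exactly as you do in your final paragraph. You instead construct the equalized map from scratch: first on $\gamma=[p,q]$ via the circle version of \cite[Theorem~2.2]{Freeman20}, then on each subtree of $T\setminus\gamma$ via \cite[Theorem~5.10]{FG22}, and finally glue using Lemma~\ref{L:subset_and_components}. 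This works, but it requires verifying that $\tilde\gamma$ is a bounded turning circle (true, with constant depending only on $C$, but a detail you skip), invoking the gluing lemma, and tracking more constants. The paper's folding trick bypasses all of this by observing that equalizing two prescribed values of an already-Lipschitz-light map into $\mathbb{R}$ costs only a fixed factor. Your descent argument for lightness on the quotient---splitting the chain at shortcut steps and bounding via the common pivot $[p]=[q]$---is essentially identical to the paper's, which phrases it in terms of $\delta$-components of the lifted chain in $T$.
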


\begin{proof}
We may write $X=T/P$, where $P=\{a,b\}\subset \mathcal{L}(T)$. Let $\rho$ denote the quotient metric on $T/P$. By \cite[Theorem D]{FG23} (see also \cite[Theorem 5.10]{FG23}), there exists an $L$-Lipschitz $Q'$-light map $f:T\to\mathbb{R}$, where $L$ and $Q'$ depend only on $C$ and $D$. Then we post-compose $f$ with the 1-Lipschitz 3-light map
$$x \mapsto \begin{cases} x & x \leq \dfrac{f(a)+f(b)}{2} \\ f(a)+f(b)-x & x \geq \dfrac{f(a)+f(b)}{2}
\end{cases},$$
obtaining an $L$-Lipschitz $Q''$-light map $g: T \to \R$ with $g(a)=g(b)$, where $L,Q''$ depend only on $C,D$. Then $g$ descends to a map $\bar{g}: T/P \to \R$, defined by the property $g = \bar{g} \circ \pi$, where $\pi: T \to T/P$ is the projection. It follows from Remark~\ref{R:quotient} that $\Lip(\bar{g}) = \Lip(g) = L$.

To prove $Q$-lightness of $\bar{g}$, we need the following claim.

\medskip
\noindent \underline{Claim}: \emph{For all $\delta \geq 0$ and all $\delta$-chains $\{[z_k]\}_{k\in K} \subset T/P$ with $\rho(P,\{[z_k]\}_{k\in K}) \leq \delta$, we have}
\begin{equation*} \label{eq:pi^-1(z_k)}
    \pi^{-1}(\{[z_k]\}_{k\in K}) = \widetilde{E}_a \cup \widetilde{E}_b,
\end{equation*}
\emph{where $\widetilde{E}_a,\widetilde{E}_b$ are $2\delta$-connected sets with $d(a,\widetilde{E}_a) \leq \delta$ and $d(b,\widetilde{E}_b) \leq \delta$.}
\medskip

We prove this by induction on $|K|$. The base case $|K| = 1$ is straightforward. Now suppose that $|K| \geq 2$. By the inductive hypothesis, we have $\pi^{-1}(\{[z_k]\}_{k < \max(K)}) = \widetilde{E}_a \cup \widetilde{E}_b$ for some $2\delta$-connected sets $\widetilde{E}_a,\widetilde{E}_b$ with $d(a,\widetilde{E}_a) \leq \delta$ and $d(b,\widetilde{E}_b) \leq \delta$. Since $\{[z_k]\}_{k\in K}$ is a $\delta$-chain, we have that $\rho([z_{\max(K)-1}],[z_{\max(K)}]) \leq \delta$. Let $u \in \pi^{-1}([z_{\max(K)-1}])$ and $v \in \pi^{-1}([z_{\max(K)}])$. If $[z_{\max(K)}] = P$, then $\pi^{-1}(\{[z_k]\}_{k \in K}) = (\widetilde{E}_a \cup \{a\}) \cup (\widetilde{E}_b \cup \{b\})$ which satisfies the desired conclusion. Hence, we may assume that $\{v\} = \pi^{-1}([z_{\max(K)}])$. We have
$$\delta \geq \rho([z_{\max(K)-1}],[z_{\max(K)}]) = \min\{d(u,v),d(u,P)+d(v,P)\},$$
which implies that (i) $d(u,v) \leq \delta$, (ii) $d(v,b)\leq \delta$, or (iii) $d(v,a)\leq \delta$. Suppose (i) holds. By the inductive hypothesis, $u \in \widetilde{E}_a \cup \widetilde{E}_b$, and without loss of generality we may assume that $u \in \widetilde{E}_a$. Then $\widetilde{E}_a \cup \{v\}$ is $2\delta$-connected and $\pi^{-1}(\{[z_k]\}_{k \in K}) = (\widetilde{E}_a \cup \{v\}) \cup \widetilde{E}_b$, proving the claim in this case. Now suppose (ii) holds. Then $d(v,b) \leq \delta$, and thus $\widetilde{E}_b \cup \{v\}$ is $2\delta$-connected and $\pi^{-1}(\{[z_k]\}_{k \in K}) = \widetilde{E}_a \cup (\widetilde{E}_b \cup \{v\})$, proving the claim in this case. The last case (iii) follows from a similar argument.

With this claim in hand, we can prove that $\bar{g}$ is $Q$-light. Let $\delta \geq 0$ and $E \subset \R$ with $\diam(E) \leq \delta$. Let $\{[z_k]\}_{k \in K} \subset \bar{g}^{-1}(E)$ be a $\delta$-chain. We consider two cases: (i) $\rho(P,\{[z_k]\}_{k \in K}) \geq \delta$ and (ii) $\rho(P,\{[z_k]\}_{k \in K}) \leq \delta$. Assume (i) holds. Then for all $k < \max(K)$, we have
$$\delta \geq \rho([z_k],[z_{k+1}]) = \min\{d(z_k,z_{k+1}),d(z_k,P)+d(z_{k+1},P)\}$$
$$\geq \min\{d(z_k,z_{k+1}),2\delta\},$$
implying that $\{z_k\}_{k \in K}$ is a $\delta$-chain in $g^{-1}(E)$. Since $g$ is $Q''$-light, this implies 
\[\diam(\{[z_k]\}_{k \in K}) \leq \diam(\{z_k\}_{k\in K}) \leq Q''\delta,\]
proving $Q''$-lightness in this case. Now suppose (ii) holds. By the claim, $\pi^{-1}(\{[z_k]\}_{k\in K}) = \widetilde{E}_a \cup \widetilde{E}_b$, where $\widetilde{E}_a,\widetilde{E}_b$ are $2\delta$-connected sets with $d(a,\widetilde{E}_a) \leq \delta$ and $d(b,\widetilde{E}_b) \leq \delta$. Then since $\widetilde{E}_a,\widetilde{E}_b \subset g^{-1}(E)$ and $g$ is $Q''$-light, we have that $\diam(\widetilde{E}_a), \diam(\widetilde{E}_b) \leq 2Q''\delta$. This inequality together with the fact that $d(a,\widetilde{E}_a),d(b,\widetilde{E}_b) \leq \delta$ implies
$$\diam(\{[z_k]\}_{k\in K}) = \diam(\pi(\widetilde{E}_a \cup \widetilde{E}_b)) \leq 4Q''\delta+2\delta,$$
proving $(4Q''+2)$-lightness in this final case.
\end{proof}

We will also use the following, which may be of independent interest as it provides a new characterization of uniform disconnectedness. 

\begin{proposition}\label{P:disconnected_quotient}
Let $\alpha\in (0,1)$ and suppose $X$ is a metric space. If $Y$ is an $\alpha$-uniformly disconnected subset of $X$, then the quotient map $\pi:X\to X/\overline{Y}$ is $1$-Lipschitz $Q$-light, for $Q=(9/\alpha+8)$. Conversely, if $\pi:X\to X/\overline{Y}$ is $1$-Lipschitz $Q$-light, then $Y$ is $\beta$-uniformly disconnected for any $\beta<1/Q$. 
\end{proposition}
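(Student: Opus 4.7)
The plan is to handle the two implications independently. The inequality $\rho([x],[x']) \leq d(x,x')$ built into the quotient metric gives $1$-Lipschitzness of $\pi$ for free, so the real work is the two-way relationship between chains in $Y$ and chains in $\pi^{-1}(E) \subset X$ with respect to the original metric on $X$.

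\textbf{Forward direction.} Fix $r > 0$ and $E \subset X/Y$ with $\diam_\rho(E) \leq r$, and let $\{x_i\}_{i \in I}$ be an $r$-chain in $\pi^{-1}(E)$; I want to bound $\diam(\{x_i\})$ by $Qr$ in the $X$-metric. The pairwise bound $\rho([x_i],[x_j]) \leq r$ forces, for each pair $i,j$, either $d(x_i,x_j) \leq r$ or $d(x_i,Y) + d(x_j,Y) \leq r$. Two cases follow. In Case~1, some $x_{i^*}$ satisfies $d(x_{i^*}, Y) > r$; then for every $j$ the second alternative fails, hence $d(x_{i^*}, x_j) \leq r$ and $\diam \leq 2r$. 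In Case~2, every $x_i$ satisfies $d(x_i, Y) \leq r$; pick $y_i \in Y$ with $d(x_i,y_i) \leq r + \varepsilon$ for small $\varepsilon > 0$, so the proxy sequence $\{y_i\} \subset Y$ has consecutive gaps bounded by $3r + 2\varepsilon$. If any pair $(y_m, y_n)$ satisfied $d(y_m,y_n) \geq (3r+2\varepsilon)/\alpha$, then the subchain $\{y_m, \ldots, y_n\}$ would be a nondegenerate relative $\alpha$-chain in $Y$, contradicting the hypothesis; hence $\diam(\{y_i\}) < (3r+2\varepsilon)/\alpha$, and the triangle inequality yields $\diam(\{x_i\}) < 3r/\alpha + 2r + O(\varepsilon)$. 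Letting $\varepsilon \downarrow 0$ and combining both cases gives $Q$-lightness with $Q$ of order $1/\alpha$, as asserted in the statement.

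\textbf{Converse direction.} Assume $\pi$ is $Q$-light and fix $\beta < 1/Q$; I will show $Y$ contains no nondegenerate relative $\beta$-chain. Toward contradiction, suppose $\{y_i\}_{i=0}^n \subset Y$ is such a chain, so $D := d(y_0, y_n) > 0$ and $d(y_i, y_{i+1}) \leq \beta D$. Setting $r := \beta D$ and $E := \{[Y]\}$ gives $\diam_\rho E = 0 \leq r$ and $\pi^{-1}(E) \supset Y$, so $\{y_i\}$ is an $r$-chain in $\pi^{-1}(E)$ whose endpoints lie in a common $r$-component. By $Q$-lightness, the diameter of that component is at most $Qr$, so $D \leq Q\beta D$, i.e., $\beta \geq 1/Q$, contradicting the choice of $\beta$.

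The main obstacle will be the forward direction, specifically the step in Case~2 that passes from the projections $y_i \in Y$ to a uniform bound on $\diam(\{y_i\})$: it is not enough to observe that the endpoints $y_0, y_{\max(I)}$ cannot be far apart, one must exclude the possibility that any \emph{intermediate} pair is far apart, using that $\alpha$-uniform disconnectedness applies to every subchain. Once that observation is in hand, the rest is routine triangle-inequality bookkeeping, and the converse is essentially a one-step collapse of the quotient map onto a single point in $X/Y$.
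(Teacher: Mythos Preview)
Your proof is correct and follows the same overall architecture as the paper's: case split on whether the chain stays close to $Y$, projection onto $Y$ combined with uniform disconnectedness in the near case, and the collapse of $Y$ to a single point of $X/Y$ for the converse. Your treatment of Case~1, however, is genuinely simpler than the paper's. You exploit the global pairwise bound $\rho([x_i],[x_j])\leq r$ (valid because every $[x_i]$ lies in $E$ with $\diam_\rho E\leq r$) to conclude directly that $d(x_{i^*},x_j)\leq r$ for all $j$ once $d(x_{i^*},Y)>r$; the paper instead works with a $4\delta$-neighborhood of $Y$ and a more involved local argument showing the chain cannot escape a ball of radius $2\delta$ about a point far from $Y$. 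As a consequence, your projection in Case~2 yields a $(3r+2\varepsilon)$-chain rather than a $9\delta$-chain, and your bound is $Q=3/\alpha+2$ instead of the stated $9/\alpha+8$; since $Q$-lightness is monotone in $Q$, this a fortiori establishes the proposition as written. Your converse is essentially identical to the paper's.
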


\begin{proof}
We first assume that $Y$ is $\alpha$-uniformly disconnected. Note that $\pi$ is clearly $1$-Lipschitz (regardless of any assumptions on $Y$). To show that $\pi$ is $Q$-light, we begin with the following claim.

\medskip
Claim: For every $\eps>0$, each $\eps$-component of $Y$ has diameter no greater than $\eps/\alpha$. 
\medskip

To verify this first claim, let $a,b$ be arbitrary points in an $\eps$-component $C$ of $Y$, and let $\{z_k\}_{k\in K}$ be an $\eps$-chain in $Y$ connecting $a$ to $b$. Since $Y$ is $\alpha$-uniformly disconnected, there must exist some $1\leq k_0\leq \max(K)$ such that 
\[\varepsilon\geq d(z_{k_0-1},z_{k_0})>\alpha\, d(a,b),\]
showing $d(a,b) < \eps/\alpha$. Since $a,b \in C$ we arbitrary, this shows $\diam(C) \leq \eps/\alpha$.

\medskip
Next, let $\delta>0$ be given, and let $[E]\subset X/\overline{Y}$ be such that $\diam([E])\leq \delta$. Let $U$ denote a $\delta$-component of $\pi^{-1}([E])$. We consider two cases for $U$.

\medskip
Case 1: There exists a point $z\in U$ such that $d(z,Y)\geq 4\delta$.
\medskip

In this case, we first note that for any points $x,y$ within distance $2\delta$ of $z$, we have $\rho([x],[y])=d(x,y)$. Indeed, this follows immediately from the definition of the quotient distance $\rho$. Thus $\pi$ restricted to $B:=B(z;2\delta)$ is an isometry, and the diameter of any $\delta$-chain $\{z_k\}_{k\in K}$ in $\pi^{-1}([E])\cap B$ is preserved by $\pi$. Since $\pi(\{z_k\}_{k\in K})\subset [E]$ and $\diam([E])\leq \delta$, we note that:
\begin{equation}\label{E:small_diam}
\textrm{Any } \delta\textrm{-chain in } \pi^{-1}([E])\cap B \textrm{ has diameter at most } \delta.
\end{equation}

Let $u,v \in U$ be arbitrary, and let $\{w_j\}_{j\in J}$ be a $\delta$-chain in $U$ containing $u,v,z$, which exists since $u,v,z \in U$ and $U$ is a $\delta$-component of $\pi^{-1}([E])$. Suppose, by way of contradiction, that $\{w_j\}_{j\in J}\not\subset B$. By re-indexing, we may assume that $w_{j_0}=z$ and $\{z_{j_0},\dots,z_{\max(J)}\}\not\subset B$ for some $j_0<\max(J)$. Write $j_1$ to denote the first index after $j_0$ such that $d(w_{j_1},z)> 2\delta$. Since $\{w_j\}_{j\in J}$ is a $\delta$-chain, the triangle inequality implies that $d(w_{j_1-1},w_{j_0})>\delta$. Since $\{w_{j_0},\dots,w_{j_1-1}\}$ is a $\delta$-chain in $\pi^{-1}([E])\cap B$, we contradict (\ref{E:small_diam}). Thus $\{w_j\}_{j\in J}\subset B$. By (\ref{E:small_diam}), we conclude that $d(u,v) \leq \delta$. Since $u,v \in U$ were arbitrary, this implies that, in Case 1, we have $\diam(U)\leq \delta$.

\medskip
Case 2: For every point $z\in U$, we have $d(z,Y)<4\delta$.
\medskip

In this case, let $\{z_k\}_{k\in K}$ denote any $\delta$-chain in $U$. By assumption, each $z_k$ is within distance $4\delta$ of a point $z_k'\in Y$. Therefore, we induce a $9\delta$-chain $\{z_k'\}_{k\in K}$ consisting of points in $Y$. By the Claim above, we have $\diam(\{z_k'\}_{k\in K})\leq 9\delta/\alpha$. This implies that, in Case 2, we have $\diam(U)\leq (8+9/\alpha)\delta$.

\medskip
Combining Cases 1 and 2, we conclude that any $\delta$-component of $\pi^{-1}([E])$ has diameter at most $(9/\alpha+8)\delta$. It follows that $\pi$ is $(9/\alpha+8)$-light. 

To conclude the proof of the lemma, we assume that $\pi:X\to X/\overline{Y}$ is $1$-Lipschitz $Q$-light. Fix any two points $x,y\in Y$. Let $\delta>0=\diam([\overline{Y}])$ be given, and let $\{z_k\}_{k\in K}$ denote a $\delta$-chain from $x$ to $y$ in $Y\subset\pi^{-1}([\overline{Y}])$. By assumption, $\diam(\{z_k\}_{k\in K})\leq Q\delta$. Therefore, $\delta\geq d(x,y)/Q$. The desired conclusion follows. 
\end{proof}

\begin{lemma}\label{L:sum_lipdim}
Suppose $X=\coprod_{i\in I}X_i$. If there exist uniformly $L$-Lipschitz $Q'$-light maps $f_i:X_i\to\mathbb{R}$, then there exists an $L$-Lipschitz $Q$-light map $f:X\to \mathbb{R}$, where $Q$ depends only on $L$ and $Q'$. 
\end{lemma}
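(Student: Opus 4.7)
The plan is to obtain $f:X\to\R$ by gluing the $f_i$'s together at the common basepoint $e$. Since adding a constant to a map changes neither its Lipschitz nor its lightness constant, I would first replace each $f_i$ by $f_i-f_i(p_i)$ and assume without loss of generality that $f_i(p_i)=0$ for every $i\in I$. The assignment $f\res_{X_i}:=f_i$ is then compatible at the identification point and yields a single map $f:X\to\R$. The $L$-Lipschitz property then follows immediately from the definition of the sum metric: for $x\in X_i$ and $y\in X_j$ with $i\neq j$,
\[
|f(x)-f(y)|\leq|f_i(x)-f_i(p_i)|+|f_j(p_j)-f_j(y)|\leq L\bigl(d_i(x,p_i)+d_j(y,p_j)\bigr)=L\sigma(x,y),
\]
and the case $i=j$ is automatic.

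The essential observation for lightness is that if two consecutive elements $x_k\in X_i$, $x_{k+1}\in X_j$ of a $\delta$-chain lie in different summands, then $\sigma(x_k,x_{k+1})=d_i(x_k,p_i)+d_j(x_{k+1},p_j)\leq\delta$, so both $x_k$ and $x_{k+1}$ individually lie within distance $\delta$ of $e$ in their own summand. Given $\delta>0$, a set $E\subset\R$ with $\diam(E)\leq\delta$, and a $\delta$-chain $\{x_k\}_{k\in K}\subset f^{-1}(E)$, I would partition $K$ into maximal consecutive index blocks $B_1,\dots,B_m$, each contained in a single summand $X_{i_l}$. Each such block is a $\delta$-chain in $f_{i_l}^{-1}(E)\subset X_{i_l}$, so the $Q'$-lightness of $f_{i_l}$ yields $d_{i_l}$-diameter at most $Q'\delta$.

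If $m=1$ the chain sits in one summand and already has diameter at most $Q'\delta$. Otherwise, for every interior block $B_l$ with $1<l<m$, both endpoints $x_{\min B_l}$ and $x_{\max B_l}$ have a chain-neighbor in a different summand, while for the first and last blocks the same applies to $x_{\max B_1}$ and $x_{\min B_m}$ respectively. Combining this with the in-block diameter bound and the triangle inequality shows that every $x_k$ satisfies $\sigma(x_k,e)\leq(Q'+1)\delta$, so $\diam(\{x_k\}_{k\in K})\leq 2(Q'+1)\delta$. Hence $f$ is $Q$-light with $Q:=2(Q'+1)$, depending only on $Q'$. I do not anticipate any substantive obstacle; the only care needed is that chains may hop across the basepoint and that the first and last blocks each have only one adjacent block, both of which are handled uniformly by the case analysis above.
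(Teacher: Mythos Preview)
Your argument is correct. The glued map $f$ is $L$-Lipschitz for exactly the reason you give, and the lightness analysis is sound: once you partition a $\delta$-chain into maximal single-summand blocks, each block has $d_{i_l}$-diameter at most $Q'\delta$ by the lightness of $f_{i_l}$, and whenever $m\geq 2$ every block has at least one endpoint within $\delta$ of $e$, forcing the whole chain into the ball of radius $(Q'+1)\delta$ about $e$. The only cosmetic loose end is the possibility that some $x_k$ equals $e$ itself (since $e$ lies in every summand the block partition is not literally unique), but this is harmless: such a point trivially satisfies $\sigma(x_k,e)=0$, and any convention for assigning it to a block leaves the estimates intact.

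Your route is genuinely different from the paper's. The paper does not argue directly at all; it simply observes that the summands $\{X_i\}$ constitute a $1$-geometric tree-like decomposition of $X$ and invokes \cite[Theorem~F]{FG22} (equivalently \cite[Theorem~4.4]{FG22}) as a black box. That citation covers a much more general gluing pattern, at the cost of importing external machinery. Your argument is fully self-contained and elementary, and in fact yields a sharper dependence: your constant $Q=2(Q'+1)$ depends only on $Q'$, whereas the lemma as stated (and the cited theorem) allows dependence on the Lipschitz constant as well. For the special ``star'' geometry of a pointed sum, your direct approach is both cleaner and more informative.
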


\begin{proof}
This lemma is merely a special case of \cite[Theorem F]{FG23} (see also \cite[Theorem 4.4]{FG23}), since the spaces $\{X_i\}_{i\in I}$ form a $2$-geometric tree-like decomposition of $X$.
\end{proof}

\begin{proof}[Proof of Theorem \ref{T:quotient_dim}]
Let $T$ denote a $(C,D)$-QC tree. By Lemma \ref{L:1bt}, we may assume that $T$ is $1$-bounded turning. By Lemma \ref{L:pre_coproduct}, we know that $T/M$ is $2$-bi-Lipschitz equivalent to $\coprod_{i\in I}T_i/M_i$, where $\{T_i\}_{i\in I}$ denotes the closures of the countably many connected components of $T\setminus M$, and $M_i:=T_i\cap M\subset \mathcal{L}(T_i)$. Let $S_i$ denote $\hull(M_i)\subset T_i$, and write $B_i$ to denote $\mathcal{B}(S_i)$. By Lemmas \ref{L:double_quotient} and \ref{L:coproduct}, each  $(T_i/M_i)/((M_i\cup B_i)/M_i)=T_i/(M_i\cup B_i)$ is $2$-bi-Lipschitz equivalent to $\coprod_{j\in J_i}X_{i,j}$, where each $X_{i,j}$ is either a $(1,D)$-QC tree or $(1,D)$-QC wreath. If $X_{i,j}$ is a tree, then by \cite[Theorem 5.10]{FG23}, there exists an $L$-Lipschitz $Q$-light map $f_{i,j}:X_{i,j}\to\mathbb{R}$ such that $L$ and $Q$ depend only on $C$ and $D$. If $X_{i,j}$ is a wreath, then, by Lemma \ref{L:wreath_dim}, there exists an $L_0$-Lipschitz $Q_0$-light map $f_{i,j}:X_{i,j}\to \mathbb{R}$ such that $L_0$ and $Q_0$ depend only on $C$ and $D$. By Lemma \ref{L:sum_lipdim}, for each $i\in I$, there exists an $L_1$-Lipschitz $Q_1$-light map $g_i:T_i/(M_i\cup B_i)\to\mathbb{R}$, where $L_1$ and $Q_1$ depend only on $C$ and $D$. Write $\pi$ to denote the quotient map $\pi:(T_i/M_i)\to T_i/(M_i\cup B_i)$ (here we again use Lemma \ref{L:double_quotient}). By Theorem \ref{T:uniform} and Proposition~\ref{P:disconnected_quotient}, the map $\pi$ is $1$-Lipschitz $Q_2$-light, with $Q_2$ depending only on $C$ and $D$. Thus, it is easy to check that the composition $g_i\circ \pi:T_i/M_i\to\mathbb{R}$ is Lipschitz light, with constants depending only on $C$, and $D$. Finally, again appealing to Lemma \ref{L:sum_lipdim}, there exists a Lipschitz light map $g:T/M\to \mathbb{R}$ with constants depending only on $C$ and $D$. In particular, $\dim_L(T/M)=1$. 
\end{proof}

\section*{Acknowledgements}
We are very grateful to the anonymous referee for their thorough reading of the original manuscript and for pointing out numerous mistakes, in particular for bringing to our attention a fundamental error in the first (faulty) proof of Theorem~\ref{T:Lip_dim_unions}.

\bibliography{LipFunctions_Rev2.bib}
\bibliographystyle{amsalpha}

\end{document}